\numberwithin{equation}{section}
\theoremstyle{plain}
\newtheorem{theorem}{Theorem}[section]
\newtheorem{lemma}[theorem]{Lemma}
\newtheorem{proposition}[theorem]{Proposition}
\newtheorem{corollary}[theorem]{Corollary}
\theoremstyle{definition}
\newtheorem{definition}[theorem]{Definition}
\newtheorem{example}[theorem]{Example}
\newtheorem{hypothesis}[theorem]{Hypothesis}
\newtheorem{notation}[theorem]{Notation}
\newtheorem{remark}[theorem]{Remark}
\newtheorem{question}[theorem]{Question}
\let\c@equation\c@theorem  
\newcommand*{\ee}{\ensuremath{\epsilon}}  
\newcommand{\kk}{\Bbbk}
\newcommand{\DD}{\mathbb{D}}
\newcommand{\E}{\mathbb{E}}
\newcommand{\F}{\mathbb{F}}
\newcommand{\A}{\mathbb{A}}
\newcommand{\B}{\mathbb{B}}
\newcommand{\I}{\mathbb{I}}
\newcommand{\J}{\mathbb{J}}
\newcommand{\M}{\mathbb{M}}
\newcommand{\N}{\mathbb{N}}
\newcommand{\X}{\mathbb{X}}
\newcommand{\Y}{\mathbb{Y}}
\newcommand{\T}{\mathbb{T}}
\newcommand{\LL}{\mathbb{L}}
\newcommand{\U}{\mathbb{U}}
\newcommand{\D}{{\sf D}}
\newcommand{\Ext}{\text{Ext}}
\newcommand{\gldim}{\text{gl.dim}}
\newcommand{\Diag}{\text{Diag}}  
\newcommand{\mc}{\mathcal}
\begin{document}

\title[On quantum groups associated to non-Noetherian regular algebras of dimension 2]
{On quantum groups associated to non-Noetherian regular algebras of dimension 2}

\author{Chelsea Walton and Xingting Wang}

\address{Department of Mathematics, Temple University, Philadelphia, Pennsylvania 19122
USA}
\email{notlaw@temple.edu}

\address{Department of Mathematics, Temple University, Philadelphia, Pennsylvania 19122
USA}
\email{xingting@temple.edu}

\bibliographystyle{abbrv}       

\begin{abstract}
We investigate homological and ring-theoretic properties of universal quantum linear groups that coact on Artin-Schelter regular algebras $A(n)$ of global dimension 2, especially with central homological codeterminant (or central quantum determinant). As classified by Zhang, the algebras $A(n)$ are connected $\N$-graded algebras that are finitely generated by $n$ indeterminants of degree $1$, subject to one quadratic relation. In the case when the homological codeterminant of the coaction is trivial, we show that the quantum group of interest, defined independently by Manin and by Dubois-Violette and Launer, is Artin-Schelter regular of global dimension 3 and also skew Calabi-Yau (homologically smooth of dimension 3). For central homological codeterminant, we verify that the quantum groups are Noetherian and have finite Gelfand-Kirillov dimension precisely when the corresponding comodule algebra $A(n)$ satisfies these properties, that is, if and only if $n=2$. We have similar results for arbitrary homological codeterminant if we require that the quantum groups are involutory. We also establish conditions when Hopf quotients of these quantum groups, that also coact on $A(n)$, are cocommutative. 

\end{abstract} 

\subjclass[2010]{16E65, 16S38, 16T05}
\keywords{Artin-Schelter regular algebra, non-Noetherian, quantum linear group, homological codeterminant}

\maketitle


\setcounter{section}{-1}



\section{Introduction}

Let $\kk$  be an algebraically closed field of characteristic zero. Let an unadorned $\otimes$ denote $\otimes_{\kk}$, and all algebras, Hopf algebras, etc., in this work are over $\kk$. 
Let $R$ be an {\it Artin-Schelter \textnormal{(}AS\textnormal{)} regular algebra}, that is, a graded homological analogue of a polynomial algebra; see Definition~\ref{def:ASreg}. 
\medskip

\begin{center}
{\bf Throughout,  all AS regular algebras are assumed to be generated in degree 1.} 
\end{center}
\medskip
It has been long suspected that: 
\[
(\star) \hspace{1.1in} 
\begin{array}{c}
\text{{\it The universal quantum \textnormal{(}linear\textnormal{)} groups that coact on $R$ possess the same }} \\ 
\text{{\it homological and ring-theoretic properties of $R$.}}
\end{array}  \hspace{1.1in}
\]

\noindent See, e.g., Manin's question in \cite[Introduction]{AST}.  The statement ($\star$) has been verified for $R$ being a skew polynomial algebra or a Noetherian AS regular algebra of global dimension 2, c.f. \cite[Sections~I.2 and~II.9]{book:BrownGoodearl}. It is important to note that in these cases the {\it homological codeterminant} [Definition~\ref{def:hdet}]  (also known as the {\it quantum determinant}) of the coaction on $R$ is a central element in the quantum group; c.f. \cite[Exercise~I.2.E]{book:BrownGoodearl}.

The goal of this work is to verify ($\star$) for the class of AS regular algebras of global dimension 2 that are not necessarily Noetherian.
These algebras were classified by Zhang in \cite{Zhang:nonnoeth}, and have the following presentation:
\begin{equation} \label{eq:A}
A(n):=A(\E) := A(n, \E) := \kk\langle v_1, \dots, v_n \rangle/ \left(\textstyle \sum_{1 \leq i,j \leq n} e_{ij} v_i v_j \right),
\end{equation}
for $\E:=(e_{ij}) \in GL_n(\kk)$, with $n \geq 2$. Here, we do not assume finite Gelfand-Kirillov (GK) dimension for the definition of Artin-Schelter regularity. See Section~\ref{sec:ASreg} for more details.

We first consider two universal quantum linear groups that coact on the algebra $A(\E)$ with {\it trivial} homological codeterminant: the Hopf algebra $\mc{O}_{A(\E)}(SL)$ due to Manin \cite{Manin:QGNCG} (see Section~\ref{subsec:OA}), and the Hopf algebra $\mc{B}(\E^{-1})$ due to Dubois-Violette and Launer \cite{DVL}  (see Section~\ref{subsec:B(E)}).
In fact, we obtain that $$\mc{O}_{A(\E)}(SL) \cong \mc{B}(\E^{-1}) \quad \quad \text{ as Hopf algebras},$$ see Corollary~\ref{C:SLGL}. Note that for $q\in \kk^\times$, we have that $\mc{B}(\E)$ and the quantum linear group $\mc{O}_q(SL_2(\kk))$ are ``co-Morita equivalent" by \cite[Theorem 1.1]{Bichon:B(E)}. See Example~\ref{ex:B(Aq)} for details on the latter quantum group.

In Section~\ref{sec:hom}, we establish the following homological properties of $\mc{O}_{A(\E)}(SL)$; these follow essentially from work of Bichon  on the homological properties of $\mc{B}(\E)$ \cite{Bichon}.

\begin{theorem}[Propositions~\ref{prop:AS},~\ref{prop:skewCY} and Corollaries~\ref{cor:CY} and~\ref{C:SLGL}]
\label{thm:homintro}
Let $A:= A(n,\E)$ be an Artin-Schelter regular algebra of global dimension $2$. We have the following statements.
\begin{enumerate}
\item $\mc{O}_{A}(SL)$ is Artin-Schelter regular of global dimension 3 in the sense of \cite[Definition 1.2 (omitting Noetherianity)]{BrownZhang:Dualizing}.
\item $\mc{O}_{A}(SL)$ is skew Calabi-Yau \textnormal{(}homologically smooth of dimension 3\textnormal{)} \textnormal{[Definition~\ref{def:skewCY}]}.
\item If $\E$ is symmetric, or if $\E$ is skew-symmetric with $n$ even, then $\mc{O}_{A}(SL)$ is both Calabi-Yau and involutory \textnormal{(}that is, the square of the antipode is the identity\textnormal{)}. 
\end{enumerate}
\end{theorem}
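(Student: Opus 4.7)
The plan is to transport all three statements across the Hopf algebra isomorphism $\mc{O}_{A(\E)}(SL) \cong \mc{B}(\E^{-1})$ of Corollary~\ref{C:SLGL}, so that everything reduces to properties of $\mc{B}(\E^{-1})$ already established or easily derived from Bichon's work. The main technical inputs I would draw on are his length-three free resolution of the trivial left $\mc{B}(\E)$-module $\kk$, his length-three free bimodule resolution of $\mc{B}(\E)$ itself, and his explicit formula for the resulting Nakayama automorphism in terms of $\E$.

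For part (a), I would apply $\mathrm{Hom}_{\mc{B}(\E^{-1})}(-,\mc{B}(\E^{-1}))$ term-by-term to Bichon's resolution of $\kk$ and verify that the cohomology of the resulting complex is $\kk$ concentrated in degree $3$ and zero otherwise. Combined with the natural connected $\N$-grading placing each matrix coefficient generator in degree $1$ (transported across Corollary~\ref{C:SLGL}), this is precisely the AS-regular condition at global dimension $3$ in the sense of \cite{BrownZhang:Dualizing}, with the Noetherian hypothesis omitted.

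For part (b), the length-three free resolution of $\mc{B}(\E^{-1})$ by finitely generated $\mc{B}(\E^{-1})^e$-modules immediately witnesses homological smoothness of dimension $3$. Computing $\Ext^\bullet_{\mc{B}(\E^{-1})^e}(\mc{B}(\E^{-1}),\mc{B}(\E^{-1})^e)$ from this resolution yields a module concentrated in degree $3$, isomorphic to $\mc{B}(\E^{-1})$ twisted on one side by Bichon's explicit algebra automorphism $\nu$; this is exactly the skew Calabi-Yau condition of \textnormal{Definition~\ref{def:skewCY}}.

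Part (c) is the delicate step. I would substitute the hypotheses on $\E$ into Bichon's formula for $\nu$, which presents $\nu$ (up to $S^2$) as conjugation on the generating matrix by a matrix built from $\E$ and $\E^{t}$. When $\E$ is symmetric, or skew-symmetric with $n$ even, the conjugating matrix degenerates to a scalar, and one simultaneously reads $S^2 = \id$ from the same explicit description; thus $\nu$ collapses to the identity, yielding both the Calabi-Yau and involutory properties. The main obstacle is pinning down Bichon's conventions and tracking the $\E \leftrightarrow \E^{-1}$ bookkeeping introduced by Corollary~\ref{C:SLGL}, so that the degeneration of $\nu$ to the identity is unambiguous; once this dictionary is fixed, the remaining verification is a direct matrix computation.
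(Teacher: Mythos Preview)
Your overall strategy---transport everything to $\mc{B}(\E^{-1})$ via Corollary~\ref{C:SLGL} and invoke Bichon's results---is precisely the paper's. Part (c) is handled exactly as you describe: one computes $S^2(\A)=\X\A\X^{-1}$ with $\X=\E^{-1}\E^T$, observes $\X=\pm\I$ under either hypothesis, and reads the same degeneration in Bichon's Nakayama formula $\mu_{\mc{B}}(\A)=\E^{-1}\E^T\A\,\E^{-1}\E^T$.

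There is, however, a genuine error in part (a): there is no ``natural connected $\N$-grading placing each matrix coefficient generator in degree $1$'' on $\mc{B}(\E^{-1})$. The defining relation $\A\E\A^T\E^{-1}=\I$ equates a would-be degree-$2$ element with $1$, so this algebra is not connected graded. The AS-regularity in question is the ungraded Hopf-algebraic notion of \cite{BrownZhang:Dualizing}, where $\kk$ is the trivial module via the counit; no grading is involved, and your appeal to one should simply be deleted.

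On the technical route for (a) and (b), the paper differs from your plan. For (a), rather than computing $\mathrm{Hom}_{\mc{B}}(-,\mc{B})$ on the resolution by hand, the paper invokes Bichon's Poincar\'e-type duality $\Ext^i_{\mc{B}}(\kk,M)\cong\mathrm{Tor}^{\mc{B}}_{3-i}(\kk,{}_\theta M)$ and sets $M=\mc{B}$, reducing the Ext computation to $\mathrm{Tor}_\bullet(\kk,\mc{B})$, which is immediate since ${}_\theta\mc{B}\cong{}_{\mc{B}}\mc{B}$ is free. For (b), the paper does not compute $\Ext^\bullet_{\mc{B}^e}(\mc{B},\mc{B}^e)$ from the bimodule resolution either; it instead deduces rigid Gorenstein from the AS Gorenstein property just established, via general Brown--Zhang lemmas (valid without Noetherianity) identifying $\Ext^i_{\mc{B}^e}(\mc{B},\mc{B}^e)$ with $\Ext^i_{\mc{B}}(\kk,\mc{B})\otimes_{\mc{B}}L(\mc{B}^e)$. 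Your direct-computation approach could be made to work, but the paper's shortcuts sidestep the bookkeeping entirely.
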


In the philosophy of ($\star$), these results are expected since $A(\E)$ is AS regular of global dimension 2 by \cite[Theorem~0.1]{Zhang:nonnoeth}, and hence, skew Calabi-Yau  by \cite[Lemma~1.2]{RRZ}. The algebra $A(n,\E)$ is also Calabi-Yau  (or {\it $1$-Nakayama} [Definition~\ref{def:rNaka}]) if and only if $n$ is even with $\E$ skew-symmetric \cite[Proposition~3.4]{Berger}. Moreover, $A(\E)$ is ($-1$){\it -Nakayama} if and only if $\E$ is symmetric; refer to Lemma~\ref{lem:NakAutom}.

In Section~\ref{sec:ring}, we establish ring-theoretic properties of $\mc{O}_{A(\E)}(SL)$ (or $\mc{B}(\E^{-1})$), which provide further evidence of ($\star$) for $R = A(\E)$. These properties can also be extended to variants of another one of Manin's universal quantum linear groups, $\mc{O}^{{c}}_{A(\E)}(GL)$ and $\mc{O}_{A(\E)}(GL/S^2)$, and extended to quantum groups introduced by Mrozinski,   $\mc{G}(\X,\Y)$ for $\X,\Y \in GL_n(\kk)$ \cite{Mrozinski}. Here, $\mc{O}^{{c}}_{A(\E)}(GL)$ and $\mc{G}(\E^{-1},\E)$ both coact on $A(\E)$ with {\it central} homological codeterminant. On the other hand, $\mc{O}_{A(\E)}(GL/S^2)$ and $\mc{G}(\E^{-1},\E^T)$ are involutory, and both coact on $A(\E)$ with {\it arbitrary} homological codeterminant. See Sections~\ref{subsec:OA}~and~\ref{subsec:B(E)} for more details. Likewise, 
$$\mc{O}^{c}_{A(\E)}(GL) \cong \mc{G}(\E^{-1},\E) \quad \quad \text{and} \quad \quad 
\mc{O}_{A(\E)}(GL/S^2) \cong \mc{G}(\E^{-1},\E^T) \quad \quad \text{ as Hopf algebras},$$ 
by Corollary~\ref{C:SLGL}. Moreover, for  $q\in \kk^\times$, we have that $\mc{G}(\X,\Y)$ and the quantum linear group $\mc{O}_q(GL_2(\kk))$ are ``co-Morita equivalent" by \cite[Theorem 1.3]{Mrozinski}. We take $\mc{O}_{A}(SL/S^2)$ to be the universal involutory quantum linear group that coacts on $A$ with trivial homological codeterminant. Consider the following result.

\begin{theorem}[Theorem~\ref{thm:ring} and \cite{Zhang:nonnoeth}] \label{thm:ringintro}
Let $A:=A(n,\E)$ be an Artin-Schelter  regular algebra of global dimension $2$. Then, we have that the following statements are equivalent.
\begin{enumerate}
\item $A$ is Noetherian.
\item $A$ has finite Gelfand-Kirillov dimension.
\item $\mc{O}_{A}(SL/S^2)$,  $\mc{O}_{A}(GL/S^2)$, $\mc{O}_{A}(SL)$, and $\mc{O}^c_{A}(GL)$ are Noetherian.
\item $\mc{O}_{A}(SL/S^2)$,  $\mc{O}_{A}(GL/S^2)$, $\mc{O}_{A}(SL)$, and $\mc{O}^c_{A}(GL)$ have finite Gelfand-Kirillov dimension.
\item $n=2$.
\end{enumerate}
\end{theorem}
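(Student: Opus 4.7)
The equivalence $(a) \Leftrightarrow (b) \Leftrightarrow (e)$ is the content of Zhang's classification theorem \cite[Theorem 0.1]{Zhang:nonnoeth}; concretely, $A(n, \E)$ is non-Noetherian with exponential growth precisely when $n \geq 3$. So the new work is to tie conditions (c) and (d) to these.

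\textbf{Plan for $(e) \Rightarrow (c), (d)$.} When $n = 2$, the algebra $A(2, \E)$ is either a quantum or Jordan plane, and each of the four Hopf algebras in (c), (d) becomes a multi-parameter quantum $SL_2$ or $GL_2$ Hopf algebra, or an involutory variant thereof. Using Corollary~\ref{C:SLGL} to switch to the $\mc{B}(\E^{-1})$ and $\mc{G}(\X, \Y)$ presentations, I would recognize each as an iterated Ore extension of $\kk$, from which Noetherianity and finite GK dimension follow immediately; this parallels the analogous computations in \cite[Sections~I.2 and~II.9]{book:BrownGoodearl}.

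\textbf{Plan for $(c) \Rightarrow (e)$ and $(d) \Rightarrow (e)$.} Working by contrapositive, assume $n \geq 3$; the plan is to show that each of the four Hopf algebras is then both non-Noetherian and of infinite GK dimension. Using that $\mc{O}_{A}(SL) \cong \mc{B}(\E^{-1})$ and the $GL$-analogues are AS regular of global dimension 3 by Theorem~\ref{thm:homintro}, one can compute their Hilbert series via a PBW-type basis (as given by Bichon for $\mc{B}(\E)$ and Mrozinski for $\mc{G}(\X, \Y)$). These take the form $1/p(t)$ with $p(t)$ an explicit polynomial depending on $n$; for $n \geq 3$, $p(t)$ has a real root in $(0, 1)$, forcing exponential growth and hence infinite GK dimension. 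Non-Noetherianity is then deduced either by invoking the standard fact that a connected graded AS regular algebra of global dimension 3 with exponential growth contains a noncommutative free subalgebra on two generators (and so fails the ACC), or by directly exhibiting a non-finitely-generated left ideal.

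\textbf{Expected main obstacle.} The delicate case is the two involutory Hopf algebras $\mc{O}_{A}(SL/S^2)$ and $\mc{O}_{A}(GL/S^2)$: these are not directly covered by the homological framework in Theorem~\ref{thm:homintro}, so one must either perform a parallel Hilbert series computation (verifying AS regularity of global dimension 3 after imposing the $S^2 = \id$ relation) or lift the bad ring-theoretic behavior from $\mc{O}_{A}(SL)$ and $\mc{O}^{c}_{A}(GL)$ via a Hopf surjection onto the involutory quotient. Either way, the combinatorial heart of the argument is verifying exponential growth and producing a free subalgebra in the Hopf algebra uniformly in $\E$ once $n \geq 3$.
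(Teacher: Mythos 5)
Your treatment of $(a)\Leftrightarrow(b)\Leftrightarrow(e)$ (Zhang) and of $(e)\Rightarrow(c),(d)$ matches the paper: for $n=2$ the paper exhibits $\mc{O}^c_A(GL)$ and $\mc{O}_A(GL/S^2)$ as localizations of iterated Ore extensions at the normal element ${\sf D}$, and passes to quotients for the $SL$ versions. The problems are all in your plan for $(c)\Rightarrow(e)$ and $(d)\Rightarrow(e)$.

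First, the Hilbert-series framework does not apply: $\mc{B}(\E^{-1})$ and $\mc{G}(\X,\Y)$ are not connected graded (the defining relation $\A\E\A^T\E^{-1}={\sf D}\I$ equates a degree-$2$ expression with a degree-$0$ one), and the AS regularity in Theorem~\ref{thm:homintro} is the ungraded Hopf-algebraic notion of \cite{BrownZhang:Dualizing}, so there is no ``$1/p(t)$'' Hilbert series to analyze. Second, even granting exponential growth, your route to non-Noetherianity fails: neither ``infinite GK dimension'' nor ``contains a free subalgebra on two generators'' implies failure of the ACC. (The group algebra of a polycyclic group of exponential growth, e.g.\ $\mathbb{Z}^2\rtimes_A\mathbb{Z}$ with $A$ hyperbolic, is Noetherian, has exponential growth, and contains a free subalgebra because such a group contains a free subsemigroup.) Third, your fallback for the involutory quotients runs in the wrong logical direction: you propose to ``lift the bad behavior from $\mc{O}_A(SL)$ and $\mc{O}^c_A(GL)$ via a Hopf surjection onto the involutory quotient,'' but non-Noetherianity and infinite GK dimension do not descend along surjections --- quotients of non-Noetherian rings can be Noetherian. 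The paper's argument is the reverse: it decomposes $\E$ into canonical blocks (Lemma~\ref{lem:congruent}), surjects all four Hopf algebras onto the common quotient $\coprod_{i}\mc{O}_{A_i}(SL/S^2)$, computes these $S^2$-trivial quotients explicitly as (free products of) smash products of noncommutative symmetric function algebras with group algebras (Propositions~\ref{prop:GLJord},~\ref{prop:GLDq} and Corollaries~\ref{cor:JordInfty},~\ref{cor:DqInfty}), and then in Lemma~\ref{lem:Freeproduct} exhibits explicit non-stabilizing ascending chains of ideals (for non-Noetherianity) and explicit free subalgebras (for infinite GK dimension) in small quotients such as $\kk\mathbb{Z}\ast\kk\mathbb{Z}$ and $\kk[x]\ast\kk[y]$; badness then lifts \emph{up} to all four Hopf algebras. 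Your proposal is missing this entire reduction, and the steps you substitute for it either do not apply to these algebras or rest on false implications.
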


The theorem above is expected as the equivalence of (a), (b), and (e) was established in \cite[Theorem~0.1]{Zhang:nonnoeth}.  As shown in the proof of Theorem~\ref{thm:ring}, if $n=2$, then both of the quantum groups $\mc{O}^{c}_{A}(GL)$ and $\mc{O}_{A}(GL/S^2)$ are a localization of an iterated Ore extension in four variables at the homological codeterminant.

It is natural that the introduction of the central homological codeterminant condition is accompanied by usage of the involutory condition, as there are {\it homological identities} that relate these two notions; see e.g. \cite[Theorem~0.1]{CWZ:Nakayama}.

\begin{remark}
In fact, in many of the applications of \cite[Theorem~0.1]{CWZ:Nakayama} (or of its generalization \cite[Theorem~4.3]{RRZ2}), one can obtain a more general result (on  Hopf coactions on AS regular algebras) by replacing the trivial homological codeterminant hypothesis with the assumption that the homological codeterminant of the coaction is central. For example, this applies to \cite[Theorems~0.4 and~0.6]{CWZ:Nakayama} and \cite[Theorem~0.5]{LMZ}.
\end{remark}

Motivated by the discussion above, we revise the philosophy ($\star$) as follows.
\[
(\star \star) \hspace{0.8in} 
\begin{array}{c}
\text{{\it The universal quantum \textnormal{(}linear\textnormal{)} groups $\mc{Q}$ that coact on $R$}}\\
\text{{\it  with either \textnormal{(i)} central quantum determinant or \textnormal{(ii)}  $\mc{Q}$ being involutory,  }} \\ 
\text{{\it possess the same homological and ring-theoretic properties of $R$.}}
\end{array} \hspace{1.1in}
\]

On another note, we have that the AS regular algebras $A(\E)$ are all graded coherent domains by \cite[Theorem~1.4]{Piontkovskii} and \cite[Theorem~0.1]{Zhang:nonnoeth}. This prompts the question below.

\begin{question}
Let $A$ be an AS regular algebra of global dimension $2$. Are $\mc{O}_{A}(SL/S^2)$, $\mc{O}_{A}(GL/S^2)$ $\mc{O}_{A}(SL)$,  and $\mc{O}^{c}_{A}(GL)$ coherent domains?
\end{question}

Examples of the quantum linear groups above are presented at the end of Section~\ref{sec:quantumlinear} for the case when $n=2$. In particular, we show  
that the main result of \cite{PavelWalton}, that finite dimensional semisimple Hopf actions on commutative domains must factor through a finite group  action, fails in the infinite dimensional setting in general. We refer the reader to \cite{Chirvasitu, DGJ, Goswami:quadratic} for conditions when the infinite dimensional analogue of \cite[Theorem~1.3]{PavelWalton} holds.

Nevertheless in Section~\ref{sec:finite}, we establish general results on inner-faithful (not necessarily finite dimensional) Hopf coactions on $A(\E)$. We emphasize that: 
\medskip
\begin{center}
{\bf Throughout, we assume that Hopf coactions on $A(\E)$ preserve the grading of $A(\E)$.}
\end{center}
\medskip
A key part of the investigation in Section~\ref{sec:finite} is the requirement that $A(n,\E)$ is {\it central Nakayama} (resp., {\it power central Nakayama}), that is, the requirement that the centralizer algebra for (resp., powers of) the matrix representing the Nakayama automorphism of $A(n,\E)$ is a commutative subalgebra of  the matrix algebra $M_n(\kk)$; see Definitions~\ref{def:cocom} and~\ref{def:centralNak}. We prove the following result for a general AS regular algebra $R$ of global dimension $d$ satisfying the additional hypothesis:

\begin{hypothesis} \label{hyp}
Let $\mu_R$ be the Nakayama automorphism of $R$ and let $\mu_E$ be the Nakayama automorphism of its Yoneda algebra $E:=\Ext^*_R(\!_R\kk,\!_R\kk)$. Assume the following identity $\mu_E|_{E_1}=(-1)^{d+1}(\mu_R|_{R_1})^*$.
\end{hypothesis}

\begin{theorem} [Theorem~\ref{thm:centralNak}]
Let $R$ be an AS regular algebra satisfying Hypothesis~\ref{hyp}. Let $H$ be a Hopf algebra with antipode of finite order coacting on $R$. 
Suppose that  the $H$-coaction on $R$ is inner-faithful with homological codeterminant ${\sf D} \in H$. 
If one of the following conditions~hold:
\begin{enumerate}
\item[(i)] $R$ is central Nakayama, $H$ is involutory, and ${\sf D}$ is central; or
\item[(ii)] $R$ is power central Nakayama and ${\sf D}^m$ is central for some $m \geq 1$,
\end{enumerate}
then $H$ is cocommutative.  If, further, $H$ is finite dimensional, then $H$ must be a group algebra.
\end{theorem}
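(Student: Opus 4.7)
The plan is to reduce the cocommutativity of $H$ to a commutation statement between the coaction matrix $A=(a_{ij})$ and (a power of) the Nakayama matrix $M\in GL_n(\kk)$ of $R$, and then exploit the (power) central Nakayama hypothesis to convert that commutation into cocommutativity of $\Delta$ on the algebra generators of $H$. Write the coaction as $\rho(v_j)=\sum_i v_i\otimes a_{ij}$; inner-faithfulness forces the $a_{ij}$ to generate $H$ as an algebra. The first ingredient is the homological identity of \cite[Theorem~0.1]{CWZ:Nakayama} (or its generalization \cite[Theorem~4.3]{RRZ2}) which, under Hypothesis~\ref{hyp}, takes the schematic entrywise form
\[
S^2(A)\;=\;M\cdot A\cdot M^{-1}\cdot \D,
\]
relating $S^2$ on the matrix coefficients to conjugation by $M$ together with multiplication by the homological codeterminant $\D$ (the precise transpose/sign being controlled by the $(-1)^{d+1}$ in the hypothesis).

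In case (i), $S^2=\id$ and the centrality of $\D$ collapse the identity to $MA=AM$ in $M_n(H)$, hence $A\in C_M\otimes_\kk H$, where $C_M\subset M_n(\kk)$ is the centralizer of $M$. Fix a $\kk$-basis $\{E_s\}$ of $C_M$ and expand $A=\sum_s E_s\otimes h_s$ with $h_s\in H$. The matrix-coalgebra formula $\Delta(a_{ij})=\sum_k a_{ik}\otimes a_{kj}$ then becomes
\[
\Delta(a_{ij})\;=\;\sum_{s,t}(E_sE_t)_{ij}\,h_s\otimes h_t.
\]
The central Nakayama hypothesis is precisely the commutativity of $C_M$, so $E_sE_t=E_tE_s$, and relabelling $s\leftrightarrow t$ in the sum yields $\Delta(a_{ij})=\tau\Delta(a_{ij})$, where $\tau$ is the flip. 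Since $\{h\in H:\Delta(h)=\tau\Delta(h)\}$ is a subalgebra of $H$ containing the algebra generators, $H$ itself is cocommutative.

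Case (ii) is handled by iteration. Since the antipode has finite order, we choose $N\geq 1$ that is simultaneously a multiple of the order of $S^2$, divisible by $m$ (so that $\D^N=(\D^m)^{N/m}$ is central), and large enough that $C_{M^N}$ is commutative (the content of the power central Nakayama hypothesis). Iterating the homological identity gives $S^{2N}(A)=M^NAM^{-N}\cdot\D^N$, which collapses to $M^NA=AM^N$ and places $A\in C_{M^N}\otimes H$; the cocommutativity calculation above then repeats verbatim with $M$ replaced by $M^N$. For the final assertion, any finite-dimensional cocommutative Hopf algebra over the algebraically closed field $\kk$ of characteristic zero is the group algebra of its grouplikes by the Cartier--Gabriel--Kostant theorem (finite-dimensionality prohibits nonzero primitives, and cocommutativity in characteristic zero forces pointedness and reduces $H$ to $\kk G(H)$).

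The main technical obstacle is pinning down the precise form of the homological identity under Hypothesis~\ref{hyp}---that is, aligning the compatibility $\mu_E|_{E_1}=(-1)^{d+1}(\mu_R|_{R_1})^*$ with the sign/transpose conventions of \cite[Theorem~0.1]{CWZ:Nakayama} and \cite[Theorem~4.3]{RRZ2}---so that centrality of $\D$ (resp.\ of $\D^m$) genuinely suffices to absorb all $\D$-factors and isolate the clean commutation $MA=AM$ (resp.\ $M^NA=AM^N$). Once the correct form of the identity is in hand, the remainder of the argument reduces to the essentially formal linear-algebraic calculation outlined above.
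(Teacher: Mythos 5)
Your proposal follows essentially the same route as the paper's proof: the homological identity of \cite[Theorem~0.1]{CWZ:Nakayama} converts centrality of $\D$ (resp.\ $\D^m$) plus involutivity (resp.\ finite order of $S$) into the commutation of the coaction matrix with a power of the Nakayama matrix, and the (power) central Nakayama hypothesis then yields cocommutativity on a generating subcoalgebra, hence on all of $H$ by inner-faithfulness; the paper merely runs this dually, using the induced coaction on $E_1=\Ext^1_R(\kk,\kk)$ (where Hypothesis~0.5 enters) and the quotient coalgebra $M_n(\kk)/\mc{I}_m$ whose dual algebra is the centralizer $\mc{C}(\M^m)$, in place of your explicit expansion of the coaction matrix over a basis of the centralizer. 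The one point you flagged resolves in your favor: the identity reads ${\sf D}^{-1}S^2(\Y)\,{\sf D}=\M\Y\M^{-1}$, with $\D$ entering by \emph{conjugation} rather than by right multiplication as in your schematic form (which would not cancel), so centrality of $\D$ --- or of $\D^m$, after iterating to $S^{2k}(\Y)=\D^{k}\M^{k}\Y\M^{-k}\D^{-k}$ and choosing $k$ divisible by $m$ and by the order of $S^2$ --- does indeed strip away all $\D$-factors and leave the clean commutation you need.
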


Hypothesis~\ref{hyp} holds when $R$ is $N$-Koszul AS regular or is Noetherian AS regular, by \cite[Theorem~6.3]{BergerMarconnet} and \cite[Theorem~4.2(3)]{RRZ2}, respectively. 
It is conjectured that Hypothesis~\ref{hyp} holds for all AS regular algebras generated in degree 1 \cite[Remark~4.2]{CWZ:Nakayama}. A consequence of the theorem above is given below.

\begin{corollary}[Corollary~\ref{cor:cocom}]
Let $H$ be a Hopf algebra with antipode of finite order coacting on $A(\E)$ inner-faithfully with  homological codeterminant ${\sf D}\in H$. If $\E$ is generic, and if either:
\begin{enumerate}
\item[(i)] $H$ is involutory, and ${\sf D}$ is central; or
\item[(ii)] ${\sf D}^m$ is central for some $m \geq 1$,
\end{enumerate}
 then $H$ is cocommutative. If, further, $H$ is finite dimensional, then $H$ must be a group algebra.
\end{corollary}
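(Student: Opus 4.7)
The plan is to deduce the corollary directly from Theorem~\ref{thm:centralNak}, applied with $R = A(n,\E)$. Two ingredients must be verified: that $A(\E)$ satisfies Hypothesis~\ref{hyp}, and that for generic $\E$ the algebra $A(\E)$ is both central Nakayama and power central Nakayama, so that the alternative hypothesis~(i) or~(ii) of Theorem~\ref{thm:centralNak} becomes available as needed.

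For the first ingredient, $A(\E)$ is by construction quadratic, hence $N$-Koszul with $N = 2$, so Hypothesis~\ref{hyp} holds by \cite[Theorem~6.3]{BergerMarconnet}, as already observed in the paragraph following the statement of Theorem~\ref{thm:centralNak}.

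For the second ingredient, I would first write down explicitly the matrix $M \in GL_n(\kk)$ representing the restriction of the Nakayama automorphism to the degree-one part $A(\E)_1$; by the identification in Lemma~\ref{lem:NakAutom}, $M$ is, up to a sign/scalar convention, $\E^{-T}\E$. The condition ``$\E$ generic'' should then be interpreted so that $M$ and all of its integer powers $M^k$ have $n$ pairwise distinct eigenvalues; this fails only on a countable union of proper Zariski-closed subvarieties of $GL_n(\kk)$, so is a generic condition in the usual sense. Under this hypothesis, the centralizer of $M$ in $M_n(\kk)$ equals $\kk[M]$, which is commutative, giving the central Nakayama property; similarly, the centralizer of each $M^m$ is $\kk[M^m] \subseteq \kk[M]$, which is commutative as well, so $A(\E)$ is power central Nakayama.

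With both ingredients in hand, Theorem~\ref{thm:centralNak}(i) covers case~(i) and Theorem~\ref{thm:centralNak}(ii) covers case~(ii), yielding cocommutativity of $H$. When $H$ is moreover finite dimensional, the Cartier--Gabriel--Kostant theorem (over an algebraically closed field of characteristic zero) forces $H$ to be the group algebra of its grouplike elements. The main obstacle in the proposal is pinning down the right notion of ``generic''---that is, identifying an explicit polynomial condition on the entries of $\E$ whose nonvanishing ensures that $\E^{-T}\E$ and each of its integer powers has a simple spectrum---so that the applicability of Theorem~\ref{thm:centralNak} is secured on an honest nonempty open locus in the parameter space of $\E$.
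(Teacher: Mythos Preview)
Your proposal is correct and follows essentially the same route as the paper: verify Hypothesis~\ref{hyp} via Koszulity, verify the (power) central Nakayama property, and then invoke Theorem~\ref{thm:centralNak}. The only difference is cosmetic: the paper establishes the (power) central Nakayama property by passing through Proposition~\ref{prop:cenNak}, which uses the congruence classification of $\E$ (Lemma~\ref{lem:congruent}) to write down explicit conditions~\eqref{eq:cenNak} and~\eqref{eq:pcenNak} on the parameters $q_i$, whereas you argue directly that for generic $\E$ the matrix $-\E^{-1}\E^T$ and all its powers have simple spectrum, so their centralizers are $\kk[-\E^{-1}\E^T]$ and hence commutative; the paper's version has the advantage of making ``generic'' completely explicit, while yours is more direct but leaves the locus implicit (as you yourself note).
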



\section{Background material} 
We provide background on the following topics: Artin-Schelter regular algebras, specifically of global dimension 2 in Section~\ref{sec:ASreg}; Nakayama automorphisms and skew Calabi-Yau algebras in Section~\ref{sec:Nak}; and Hopf algebra coactions and homological codeterminant in Section~\ref{sec:Hopf}. 

\subsection{Artin-Schelter regular algebras $A(\E)$ of global dimension 2}
\label{sec:ASreg}

The algebras that are the focus of this work are the {\it Artin-Schelter regular algebras} defined below.

\begin{definition} \label{def:ASreg}
Let $R$ be a connected $\mathbb{N}$-graded algebra. Namely, $R= \bigoplus_{i \geq 0} R_i$ with $R_i R_j \subseteq R_{i+j}$ and $R_0=\kk$.
We say that $R$ is
{\it Artin-Schelter Gorenstein} (or is {\it AS Gorenstein}) if
\begin{enumerate}
\item[(i)] $R$ has finite injective dimension $d < \infty$ on both sides, and
\item[(ii)] $\Ext^i_R(_R\kk,_RR) \cong \Ext^i_R(\kk_R,R_R)= \delta_{i,d} ~\kk$.
\end{enumerate}
If further
\begin{enumerate}
\item[(iii)] $R$ has finite global dimension,
\end{enumerate}
then $R$ is called {\it Artin-Schelter regular (or {\it AS regular}) of global dimension $d$}.
\end{definition}

In this paper, we are not assuming that $R$ has finite Gelfand-Kirillov dimension as in the standard definition of AS regularity. Similarly, we say a (ungraded) Hopf algebra $H$ is {\it AS Gorenstein} (resp., {\it AS regular}) if it satisfies the conditions (i)-(ii) (resp., the conditions (i)-(iii)) above, where $\kk$ is considered to be the trivial $H$-module. See \cite[Definition 1.2 (omitting Noetherianity)]{BrownZhang:Dualizing}. The connected $\mathbb{N}$-graded AS regular algebras of global dimension 2 are classified  by Zhang as follows.

\begin{theorem} \label{thm:Zhang} \cite[Theorem~0.1]{Zhang:nonnoeth}  A connected $\mathbb{N}$-graded algebra $A$ is AS regular of global dimension 2 if and only if $A$ is isomorphic to $A(n, \E)$ from \eqref{eq:A} for some $n \geq 2$, with $\E$ of full rank.
\end{theorem}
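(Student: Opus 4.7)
The plan is to establish both directions of the biconditional. The reverse direction is a standard Koszul-type computation verifying AS regularity directly, while the forward direction requires extracting the quadratic presentation from the minimal free resolution of the trivial module.

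For the reverse direction, I would exhibit an explicit length-$2$ graded free resolution of $\kk$:
$$0 \to A(-2) \xrightarrow{\phi} A(-1)^n \xrightarrow{\psi} A \to \kk \to 0,$$
where $\psi$ sends the $i$-th basis element to $v_i$ and $\phi$ is left multiplication by the column whose $i$-th entry is $\sum_j e_{ji}v_j$, so that the composition $\psi\phi$ reproduces the defining relation $r=\sum_{i,j}e_{ij}v_iv_j$. Exactness in the middle is routine since $r$ is the sole defining relation; injectivity of $\phi$ reduces, via the invertibility of $\E$, to $A$ being a graded domain, which can be verified by a Gr\"obner-basis or rewriting argument. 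Applying $\text{Hom}_A(-,A)$ to this resolution and computing cohomology then yields $\Ext^i_A({}_A\kk,{}_AA)=\delta_{i,2}\kk$, with a symmetric computation on the right; the AS Gorenstein condition follows, and finite global dimension is immediate from the length of the resolution together with nonvanishing of $\Ext^2$.

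For the forward direction, let $A$ be a connected $\N$-graded AS regular algebra of global dimension $2$ generated in degree $1$, and take the minimal graded free resolution
$$0 \to P_2 \to P_1 \to A \to \kk \to 0.$$
Minimality combined with $A$ being $1$-generated forces $P_1=A(-1)^n$ for $n=\dim_\kk A_1$. The AS Gorenstein condition $\Ext^2_A(\kk,A)\cong\kk$ pins $P_2$ to rank one, so $P_2=A(-\ell)$ for some integer $\ell\geq 2$. Once $\ell=2$ is known, the differential $P_2\to P_1$ is represented by a column $(c_i)$ with $c_i\in A_1$, yielding a single quadratic defining relation $\sum_i c_i v_i=\sum_{i,j}e_{ij}v_iv_j$; invertibility of $\E$ then follows from injectivity of $P_2\to P_1$, which would fail if the coefficient matrix were degenerate.

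The main obstacle is proving the forced equality $\ell=2$. Extracting the Hilbert series from the resolution gives $H_A(t)=1/(1-nt+t^\ell)$, and I would combine this with the AS Gorenstein functional equation on Hilbert series together with minimality of the resolution to exclude $\ell\geq 3$. A clean route is to show that AS regular algebras of global dimension $2$ generated in degree $1$ are automatically quadratic, by invoking the duality coupling between $P_1$ and $P_2$ forced by the AS Gorenstein condition together with the fact that no new module generators appear in $P_1$ beyond degree $1$. Alternatively, once Koszulness is established in this setting, the conclusion is immediate since Koszul algebras have relations concentrated in degree $2$.
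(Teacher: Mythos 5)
The paper does not prove this statement at all: it is quoted verbatim from Zhang's classification and carries only the citation \cite[Theorem~0.1]{Zhang:nonnoeth}, so your proposal has to be judged on its own. Your overall architecture (minimal graded free resolution in both directions, with the AS Gorenstein condition used to dualize and pin down $P_2$) is the standard and correct skeleton, and your ``clean route'' for $\ell=2$ --- comparing the dualized resolution, which is the minimal free resolution of $\kk_A$ up to shift, against the twists $0,-1,-\ell$ forced by degree-one generation --- is exactly the argument that works. Note, however, that the Hilbert-series inequality alone does \emph{not} exclude $\ell\geq 3$: the series $1/(1-nt+t^{\ell})$ has nonnegative coefficients for all $n\geq 2$ and $\ell\geq 2$, so you genuinely need the palindromic symmetry of the resolution (equivalently $t^{\ell}p(1/t)=\pm p(t)$ for $p(t)=1-nt+t^{\ell}$, which gives $nt^{\ell-1}=nt$), not positivity.

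The one genuine error is your claim that invertibility of $\E$ ``follows from injectivity of $P_2\to P_1$, which would fail if the coefficient matrix were degenerate.'' This implication is false. Take $A=\kk\langle v_1,v_2\rangle/(v_1v_2)$, so $\E$ has rank one: writing $r=\sum_j b_jv_j$ with $b_1=0$, $b_2=v_1$, the map $\phi$ is $a\mapsto av_1\otimes v_2$, which is injective because appending $v_1$ to a normal word (a word avoiding $v_1v_2$) never creates the forbidden factor; indeed this algebra has global dimension $2$ with the expected length-two resolution. What fails for degenerate $\E$ is the Gorenstein condition: $\Ext^2_A(\kk,A)$ is the cokernel of the dualized map, namely $A/\sum_j b_jA$ up to shift, and this is one-dimensional precisely when the $b_j=\sum_i e_{ij}v_i$ span $A_1$, i.e.\ when $\E$ has full rank (in the example above the cokernel is infinite dimensional). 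So you must derive full rank from $\Ext^2_A(\kk,A)\cong\kk$, not from injectivity of the differential. A smaller caution on the converse direction: the domain property of $A(n,\E)$ is itself a nontrivial theorem of Zhang's, not something a rewriting argument hands you for free; but you can bypass it, since after a congruence one may assume the relation has a non-self-overlapping leading word $v_1v_2$, the one-element Gr\"obner basis then gives $H_A(t)=1/(1-nt+t^2)$, and injectivity of $\phi$ follows from the Euler-characteristic identity $H_{\ker\phi}=1-(1-nt+t^2)H_A(t)=0$ rather than from $A$ being a domain.
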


To work with isomorphism classes of $A(n,\E)$, we use the following results.

\begin{lemma} \cite[End of Section 5]{Berger} \cite[Proposition~3.1]{BergerPichereau} 
\label{lem:isom}
We have that $A(n, \E) \cong A(n, \E')$ if and only if $\E$ and $\E'$ are congruent, that is to say, if and only if $\E' = \mathbb{P}^T \E \mathbb{P}$ for some $\mathbb{P} \in GL_n(\kk)$. \qed
\end{lemma}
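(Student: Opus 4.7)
The lemma is a standard ``quadratic data determines the algebra up to congruence'' result, so both directions are transparent once one sets up the correspondence between linear maps on the degree-one part and transformations of the defining matrix. Throughout, I will treat ``$\cong$'' as isomorphism of connected graded algebras, which is automatic here: any algebra map between two such augmented algebras sends the augmentation ideal to itself, hence sends $A_1 = \mathfrak{m}/\mathfrak{m}^2$ to $A'_1$ linearly, and since $A(n,\E)$ and $A(n,\E')$ are generated in degree $1$, this linear map determines the whole algebra map.

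For the \emph{if} direction, fix $\mathbb{P} = (p_{ij}) \in GL_n(\kk)$ with $\E' = \mathbb{P}^T \E \mathbb{P}$ and define
\[
\phi \colon A(n,\E) \longrightarrow A(n,\E'), \qquad \phi(v_i) = \sum_{k} p_{ik}\, v'_k.
\]
A direct matrix computation shows
\[
\phi\Bigl(\sum_{i,j} e_{ij} v_i v_j\Bigr) \;=\; \sum_{k,l}\bigl(\mathbb{P}^T \E\, \mathbb{P}\bigr)_{kl}\, v'_k v'_l \;=\; \sum_{k,l} e'_{kl}\, v'_k v'_l,
\]
so the defining relation of $A(n,\E)$ is sent to the defining relation of $A(n,\E')$, and $\phi$ descends to a graded algebra homomorphism. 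The same recipe applied to $\mathbb{P}^{-1}$ provides a two-sided inverse.

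For the \emph{only if} direction, suppose $\psi \colon A(n,\E) \xrightarrow{\sim} A(n,\E')$ is an isomorphism. Restricting to degree $1$ gives a linear isomorphism, so $\psi(v_i) = \sum_k p_{ik} v'_k$ for a matrix $\mathbb{P} \in GL_n(\kk)$. Since the kernel of $T(V') \twoheadrightarrow A(n,\E')$ in degree $2$ is spanned by the single element $r' := \sum_{i,j} e'_{ij}\, v'_i v'_j$ (the matrix $\E'$ being invertible, hence nonzero), the image $\psi(r)$ of $r := \sum_{i,j} e_{ij}\, v_i v_j$ must be a scalar multiple of $r'$. Expanding as in the first direction yields
\[
\mathbb{P}^T \E\, \mathbb{P} \;=\; c\, \E' \qquad \text{for some } c \in \kk^{\times}.
\]
Finally, since $\kk$ is algebraically closed, choose $\lambda \in \kk^{\times}$ with $\lambda^2 = c$ and replace $\mathbb{P}$ by $\lambda^{-1}\mathbb{P}$; this absorbs the scalar and gives $\mathbb{P}^T \E \mathbb{P} = \E'$ on the nose.

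The only mild subtlety is the rescaling step at the end, which relies on the existence of square roots in $\kk$ (granted here) and on the one-dimensionality of the degree-$2$ relation space. Neither is a real obstacle; the bulk of the argument is just matching up the quadratic form $\E$ with the substitution induced by $\mathbb{P}$.
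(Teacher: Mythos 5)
The paper offers no proof of this lemma at all --- it is stated with a \verb|\qed| and attributed to Berger and to Berger--Pichereau --- so there is nothing internal to compare against; your argument is the standard one and, on the substantive points, it is correct. The matrix computation $\phi(r)=\sum_{k,l}(\mathbb{P}^T\E\,\mathbb{P})_{kl}\,v'_kv'_l$ is right, the observation that a graded isomorphism must carry the one-dimensional relation space $\kk r$ onto $\kk r'$ gives $\mathbb{P}^T\E\,\mathbb{P}=c\,\E'$, and the rescaling $\mathbb{P}\mapsto\lambda^{-1}\mathbb{P}$ with $\lambda^2=c$ legitimately absorbs the scalar since $\E$ appears quadratically in the congruence.

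The one point you should fix is the opening parenthetical: it is \emph{not} true that an arbitrary algebra map between these algebras sends the augmentation ideal to itself. For instance $A(2,\DD_2(-1))\cong\kk[x,y]$ admits the ungraded automorphism $x\mapsto x+1$, $y\mapsto y$, which does not preserve $\mathfrak{m}=(x,y)$, so your identification of $A_1$ with $\mathfrak{m}/\mathfrak{m}^2$ is not available for an ungraded isomorphism. The honest options are either to declare (as is the standing convention for such classification statements, and as the paper surely intends) that $\cong$ means isomorphism of graded algebras --- in which case the parenthetical is unnecessary and the rest of your proof goes through verbatim --- or, if you want the ungraded statement, to invoke a genuine theorem reducing ungraded to graded isomorphism for connected graded algebras generated in degree one (in the style of Bell--Zhang), which is far from the one-line argument you gave. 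With the graded convention adopted, your proof is complete.
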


\begin{example} \label{ex:01 10}
We have that $\kk\langle u,v \rangle/(u^2 + v^2)$ is isomorphic to $\kk\langle u,v \rangle/(uv+vu)$ since 
${\footnotesize \begin{pmatrix} 1 & 0\\ 0& 1 \end{pmatrix}}$ and ${\footnotesize \begin{pmatrix} 0 & 1\\ 1& 0 \end{pmatrix}}$ are congruent via $\mathbb{P} = {\footnotesize \begin{pmatrix} \frac{1}{2} & \frac{-i}{2} \\ 1& i \end{pmatrix}}$ with $i^2=-1$.
\end{example}

To employ Lemma~\ref{lem:isom}, we recall the canonical matrices of bilinear forms. 

\begin{notation}[$\J_n$, $\B_r(q)$, $\DD_{2r}(q)$] \label{not:matrices} Define the following matrices:
{\small \[
\J_n=
\begin{pmatrix}
0 &&&&(-1)^{n+1}\\
&&& \iddots&(-1)^{n}\\
&&1&\iddots&\\
&-1&-1&&\\
1&1&&&0
\end{pmatrix}_{n \times n}
\text{~~with } \J_1 = (1),
\quad 
\quad 
\B_r(q) = 
\begin{pmatrix}
q& 1 && 0\\
& q & \ddots &\\
&& \ddots & 1\\
0 &&& q
\end{pmatrix}_{r \times r}
\text{~~with }
\B_1(q) = (q),
\]}
{\small \[
\mathbb D_{2r}(q) = 
\begin{pmatrix}
0 & \I_r\\
\B_r(q) & 0
\end{pmatrix}_{2r \times 2r}
\quad \text{ with }\mathbb D_2(q) = 
\begin{pmatrix} 
0 & 1\\
q& 0
\end{pmatrix}.
\]}
\end{notation}

\begin{lemma} \cite[Theorem~2.1(a)]{HornSer2} \label{lem:congruent}
Each square matrix over $\kk$ is congruent to a direct sum, uniquely determined up to permutation of summands, of canonical matrices of the three types:
\begin{itemize}
\item $\B_r(0)$, 
\item $\J_n$, or 
\item $\mathbb D_{2r}(q), \text{ for } q \neq 0, (-1)^{r+1} \text{ where } q \text{ is determined up to replacement by } q^{-1}$. \qed
\end{itemize}
\end{lemma}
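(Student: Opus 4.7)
The plan is to reduce the congruence classification of square matrices over $\kk$ to two cleaner subproblems: a regularization step that separates out the singular part, and a similarity analysis of the cosquare $M^{-T} M$ for the nonsingular part. This is the classical strategy, going back to Kronecker for matrix pencils and systematized for bilinear forms in \cite{HornSer2}.

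First, I would establish a regularization decomposition: every square matrix $M$ is congruent to a direct sum $M_{\mathrm{sing}} \oplus M_{\mathrm{reg}}$, where $M_{\mathrm{reg}}$ is nonsingular and $M_{\mathrm{sing}}$ carries all of the singularity. The argument is inductive in size: if $M$ is singular, one applies a pair of row and column operations, packaged as a congruence $M \mapsto P^T M P$, to peel off a singular block; over an algebraically closed field of characteristic zero each singular summand so produced is then congruent to a nilpotent Jordan block $\B_r(0)$.

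Second, for the nonsingular summand $M_{\mathrm{reg}}$, I would study the cosquare $\Phi := M_{\mathrm{reg}}^{-T} M_{\mathrm{reg}}$. The crucial link is: two nonsingular square matrices are congruent if and only if their cosquares are similar. One direction is immediate, since $M_{\mathrm{reg}}' = P^T M_{\mathrm{reg}} P$ forces $\Phi' = P^{-1} \Phi P$; the converse requires building a congruence explicitly from a similarity, and is handled case by case on the Jordan structure. A short computation gives $\Phi^{-T} = M_{\mathrm{reg}} \, \Phi \, M_{\mathrm{reg}}^{-1}$, so $\Phi^{-T}$ is similar to $\Phi$, which forces the Jordan blocks of $\Phi$ to come in $\lambda \leftrightarrow \lambda^{-1}$ pairs. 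Hence the Jordan form of $\Phi$ decomposes as a disjoint union of \emph{true pairs} $\{J_r(q), J_r(q^{-1})\}$ with $q \neq q^{-1}$, together with \emph{self-paired} blocks $J_n(\lambda)$ satisfying $\lambda^2 = 1$.

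Finally, I would match these cosquare patterns to the advertised canonical forms: each true pair $\{J_r(q), J_r(q^{-1})\}$ is realized by a single block $\mathbb D_{2r}(q)$, with the ambiguity $q \leftrightarrow q^{-1}$ corresponding to swapping the two Jordan blocks in the pair; and each self-paired Jordan block $J_n((-1)^{n+1})$ is realized by $\J_n$. The hard part will be the parity analysis that determines which eigenvalue in $\{+1, -1\}$ a self-paired block of size $n$ must carry---the answer is $(-1)^{n+1}$, which one verifies by computing the cosquare of $\J_n$ directly---and this same parity accounting explains the excluded value $q = (-1)^{r+1}$ in the $\mathbb D_{2r}(q)$ family, since at that value the putative pair $\{J_r(q), J_r(q^{-1})\}$ collapses onto a self-paired block and $\mathbb D_{2r}((-1)^{r+1})$ becomes redundant with $\J_r \oplus \J_r$. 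Uniqueness up to permutation then follows from the uniqueness of the Jordan canonical form applied to $\Phi$, combined with the uniqueness of the regularization decomposition.
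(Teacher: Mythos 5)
First, note that the paper does not prove this lemma at all: it is quoted verbatim from Horn--Sergeichuk and used as a black box, so there is no in-paper argument to compare against. Your sketch is in fact a reconstruction of the strategy of the cited reference (regularize to split off the singular part, then analyze the cosquare $\Phi = M^{-T}M$ of the nonsingular part), and the computations you do spell out are correct: $\Phi^{-T} = M\Phi M^{-1}$, so the Jordan blocks of $\Phi$ pair up under $\lambda\leftrightarrow\lambda^{-1}$; the cosquare of $\mathbb{D}_{2r}(q)$ is $\B_r(q)\oplus \B_r(q)^{-T}$, i.e.\ a pair $\{J_r(q),J_r(q^{-1})\}$; the cosquare of $\J_n$ is a single Jordan block with eigenvalue $(-1)^{n+1}$; and $\mathbb{D}_{2r}((-1)^{r+1})$ has the same cosquare as $\J_r\oplus\J_r$, which explains the excluded value of $q$.

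As a proof, however, the sketch has three genuine gaps, two of which are the actual content of the theorem. (1) The regularization step --- that every matrix is congruent to a nonsingular matrix direct-summed with copies of $\B_r(0)$, uniquely --- is asserted in one sentence; the ``peeling off'' is a nontrivial multi-case algorithm, not a single pair of row and column operations. (2) The converse half of the cosquare criterion (similar cosquares imply congruent) is precisely the hard core of the theorem, and ``handled case by case on the Jordan structure'' is a placeholder for it; even after reducing to equal cosquares one must extract a suitable square root of a matrix commuting with $\Phi$. (3) Most substantively, your existence argument is incomplete: the constraint that the Jordan structure of $\Phi$ is invariant under $\lambda\mapsto\lambda^{-1}$ does not rule out a wrong-parity block $J_n((-1)^{n})$ occurring with \emph{odd} multiplicity --- for instance a putative $2\times 2$ cosquare $\mathrm{diag}(-1,1)$. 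Your canonical list produces blocks $J_n((-1)^{n})$ only in pairs (via $\mathbb{D}_{2n}((-1)^{n})$) and single self-paired blocks only with eigenvalue $(-1)^{n+1}$ (via $\J_n$), so exhaustiveness requires proving that wrong-parity blocks always have even multiplicity in a cosquare. That parity constraint is a real theorem (it is where the symmetric-versus-alternating dichotomy of bilinear forms enters) and nothing in your sketch delivers it. Once these three ingredients are supplied or cited, the matching and uniqueness steps you describe do go through.
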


We use this result to study isomorphism classes of AS regular algebras of global dimension 2.

\begin{definition}[$A_{Sym}$, $A_{Skew}$, $A_{Jord}$, $A_{D_q}$] Consider the following special types of AS regular algebras of global dimension 2 associated to a nondegenerate bilinear form:
\begin{itemize}
\item {\it Symmetric type} corresponding to $\J_1^{\oplus n}$: 
$$A_{Sym}:= \kk\langle v_1, v_2, \dots, v_n \rangle/ (v_1^2 +v_2^2 + \cdots + v_n^2),$$ 
associated to the  symmetric nondegenerate bilinear form; 
\item {\it Skew-symmetric type} corresponding  to $\mathbb D_2(-1)^{\oplus \frac{n}{2}}$ for $n$ even: $$A_{Skew}:= \kk\langle v_1, v_2, \dots, v_n \rangle/ ([v_1,v_2] + \cdots + [v_{n-1},v_n]),$$ 
associated to the  skew-symmetric nondegenerate bilinear form; 
\item {\it Jordan type} corresponding  to $\J_n$: 
$$A_{Jord}:= \frac{\kk\langle v_1, v_2, \dots, v_n \rangle}{ 
((-1)^{n+1}v_1v_n + (-1)^nv_2v_{n-1} + (-1)^n v_2v_n + \cdots - v_{n-1}v_2 - v_{n-1}v_3 + v_nv_1 + v_nv_2)};$$ 
\item {\it Double quantum type}  corresponding  to $\mathbb D_{2r}(q)$ for $q \neq 0, (-1)^{r+1}$: 
$$A_{D_q}:= \frac{\kk\langle v_1, v_2, \dots,  v_{2r} \rangle}{ 
\left(
\begin{array}{c}
v_1v_{r+1}+q v_{r+1}v_1+v_2v_{r+2}+q v_{r+2}v_2+\cdots+v_{r}v_{2r}+q v_{2r}v_{r}\\
+v_{r+1}v_{2}+v_{r+2}v_3+\cdots+v_{2r-1}v_{r}
\end{array}
\right)}.
$$ 
\end{itemize}
\end{definition}
\medskip

Now one can describe explicitly the AS regular algebras $A(n, \E)$ of global dimension~2, up to isomorphism; we do so for $n=2, 3$ below.

\begin{corollary}  \label{cor:AS}
\textnormal{(a)} An AS regular algebra $A(2, \E)$ is isomorphic to either $A_{Sym}$, the Jordan plane $A_{Jord}=\kk\langle v_1, v_2 \rangle/ (v_2v_1-v_1v_2+v_2^2)$, or one of the quantum planes $A_{D_q}=\kk\langle v_1, v_2 \rangle/ (v_1v_2+qv_2v_1)$ for $q \in \kk \setminus \{0, 1\}$.

\textnormal{(b)} An AS regular algebra $A(3, \E)$ is isomorphic to either:
\begin{itemize}
\item $A_{Sym}= \kk\langle v_1, v_2, v_3 \rangle/ (v_1^2+v_2^2+v_3^2)$;
\item $A_{Jord}=\kk\langle v_1, v_2, v_3 \rangle/ (v_1v_3-v_2^2-v_2v_3+v_3v_1+v_3v_2)$;  
\item $\kk\langle v_1, v_2, v_3 \rangle/ (v_1^2-v_2v_3+v_3v_2+v_3^2)$; or 
\item  $\kk\langle v_1, v_2, v_3 \rangle/ (v_1^2 + v_2v_3+ qv_3v_2)$ for $q \in \kk \setminus \{0, 1\}$.
\end{itemize}
\end{corollary}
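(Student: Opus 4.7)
The plan is to combine Theorem~\ref{thm:Zhang}, Lemma~\ref{lem:isom}, and Lemma~\ref{lem:congruent} to reduce the classification to an enumeration of the congruence classes of full-rank $n\times n$ matrices for $n=2,3$, then read off the defining relation of $A(n,\E)$ in each case.

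First I would observe that by Lemma~\ref{lem:congruent}, every $\E \in GL_n(\kk)$ is congruent to a block-diagonal matrix whose blocks are drawn from $\B_r(0)$, $\J_n$, and $\DD_{2r}(q)$ with $q\neq 0,(-1)^{r+1}$. Since each $\B_r(0)$ is strictly upper triangular (hence singular) while $\det\J_n=\pm 1$ and $\det\DD_{2r}(q)=\pm q^r$, the full-rank condition is equivalent to having no $\B_r(0)$ summands. Combined with Lemma~\ref{lem:isom}, the isomorphism classes of $A(n,\E)$ correspond bijectively to the congruence classes of such block-diagonal matrices of total size $n$.

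For $n=2$, the only block-size partitions are $1+1$ and $2$, yielding exactly three canonical matrices:
$\J_1\oplus\J_1=\I_2$, $\J_2=\bigl(\begin{smallmatrix}0&-1\\1&1\end{smallmatrix}\bigr)$, and $\DD_2(q)=\bigl(\begin{smallmatrix}0&1\\q&0\end{smallmatrix}\bigr)$ for $q\in\kk\setminus\{0,1\}$. Substituting these into the defining relation $\sum e_{ij}v_iv_j$ gives $v_1^2+v_2^2$, $v_2v_1-v_1v_2+v_2^2$, and $v_1v_2+qv_2v_1$, respectively, which are precisely $A_{Sym}$, $A_{Jord}$, and $A_{D_q}$. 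For $n=3$, the block-size partitions are $1+1+1$, $1+2$, and $3$; since $\DD_{2r}(q)$ has even size, the possibilities are $\J_1^{\oplus 3}$, $\J_1\oplus\J_2$, $\J_1\oplus\DD_2(q)$ (with $q\in\kk\setminus\{0,1\}$), and $\J_3$. A direct computation of $\sum e_{ij}v_iv_j$ in each case recovers the four presentations listed in part~(b): $\J_1\oplus\J_2=\left(\begin{smallmatrix}1&0&0\\0&0&-1\\0&1&1\end{smallmatrix}\right)$ produces $v_1^2-v_2v_3+v_3v_2+v_3^2$; $\J_1\oplus\DD_2(q)$ produces $v_1^2+v_2v_3+qv_3v_2$; and $\J_3=\left(\begin{smallmatrix}0&0&1\\0&-1&-1\\1&1&0\end{smallmatrix}\right)$ produces the Jordan-type relation.

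I expect the main step to be no more than bookkeeping: identifying $\J_2$ and $\J_3$ explicitly from the pattern in Notation~\ref{not:matrices}, verifying nondegeneracy, and matching the resulting quadratic relations with those displayed in the statement. No genuine obstacle is anticipated, since Lemma~\ref{lem:congruent} does the heavy lifting and the remaining work is the straightforward enumeration described above.
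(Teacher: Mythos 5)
Your proposal is correct and follows essentially the same route as the paper: both reduce to Lemma~\ref{lem:congruent} via Lemma~\ref{lem:isom}, discard the singular $\B_r(0)$ blocks, enumerate the canonical forms $\J_1\oplus\J_1$, $\J_2$, $\DD_2(q)$ (resp.\ $\J_1^{\oplus 3}$, $\J_3$, $\J_1\oplus\J_2$, $\J_1\oplus\DD_2(q)$), and read off the quadratic relations. The only cosmetic difference is that the paper identifies $\J_1\oplus\J_1$ with the quantum plane $\kk\langle v_1,v_2\rangle/(v_1v_2+v_2v_1)$ via Example~\ref{ex:01 10} before renaming it $A_{Sym}$, which you bypass by writing the relation $v_1^2+v_2^2$ directly.
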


\begin{proof}
(a) This is well-known, but it also follows immediately from Lemma~\ref{lem:congruent}. Namely, the isomorphism classes of AS regular algebras $A(2, \E)$ are represented by $\J_1 \oplus \J_1$, $\J_2$, and $\mathbb D_2(q)$ for $q \neq 0, 1$. Here,  
\begin{itemize}
\item $\J_1 \oplus \J_1$ represents the quantum plane $\kk\langle v_1, v_2 \rangle/ (v_1v_2+v_2v_1) \cong \kk\langle v_1, v_2 \rangle/ (v_1^2+v_2^2) = A_{Sym}$; 
\item $\J_2$ represents the Jordan plane $A_{Jord}$; and 
\item $\mathbb D_2(q)$  represents the quantum planes (of double quantum type) $A_{D_q} = \kk\langle v_1, v_2 \rangle/ (v_1v_2 + q v_2v_1)$ for $q \in \kk \setminus \{0, 1\}$. This includes $\mathbb D_2(-1)$ for the commutative polynomial algebra $A_{Skew}$ on two variables.
\end{itemize}

(b) The isomorphism classes of AS regular algebras $A(3, \E)$ are represented by $\J_1^{\oplus 3}$, $\J_3$, $\J_1 \oplus \J_2$, and $\J_1 \oplus \mathbb D_2(q)$ for $q \in \kk \setminus \{0, 1\}$. Here, 
\begin{itemize}
\item $\J_1^{\oplus 3}$ represents $A_{Sym}= \kk\langle v_1, v_2, v_3 \rangle/ (v_1^2+v_2^2+v_3^2)$;
\item $\J_3$ represents $A_{Jord} = \kk\langle v_1, v_2, v_3 \rangle/ (v_1v_3-v_2^2-v_2v_3+v_3v_1+v_3v_2)$; 
\item $\J_1 \oplus \J_2$ represents $\kk\langle v_1, v_2, v_3 \rangle/ (v_1^2-v_2v_3+v_3v_2+v_3^2)$;
\item $\J_1 \oplus \mathbb D_2(q)$ for $q \in \kk \setminus \{0, 1\}$ represents  $\kk\langle v_1, v_2, v_3 \rangle/ (v_1^2 + v_2v_3+ qv_3v_2)$.
\end{itemize}
\vspace{-.2in}

\end{proof}

In general, we have the following result.

\begin{lemma} \label{lem:AS(E)}
Let $\E$ be the matrix associated to an AS regular algebra $A(\E)$ of global dimension 2. Then, $\E$ is congruent to the direct sum of matrices $\J_{n}$ and $\mathbb D_{2r}(q)$ with $q\neq 0, (-1)^{r+1}$.
\end{lemma}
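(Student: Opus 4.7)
The plan is to invoke the canonical form classification from Lemma~\ref{lem:congruent} directly and then use the fact that $\E \in GL_n(\kk)$ (i.e., that $\E$ has full rank by Theorem~\ref{thm:Zhang}) to rule out the $\B_r(0)$ summands. Since congruence is realized by $\E \mapsto \mathbb{P}^T \E \mathbb{P}$ with $\mathbb{P} \in GL_n(\kk)$, it preserves the rank of $\E$; equivalently, it preserves invertibility. Thus it suffices to check which of the three canonical block types are invertible.

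First I would observe that each block $\B_r(0)$ is an upper-triangular matrix with zeros on the diagonal and ones on the superdiagonal; in particular $\det \B_r(0) = 0$, so any direct sum that contains a $\B_r(0)$ summand fails to be invertible. Hence no such block can appear in the canonical form of $\E$. Next I would verify that the two remaining block types are indeed nonsingular: the matrix $\J_n$ has $\pm 1$ along its anti-diagonal with all entries strictly below that anti-diagonal equal to $0$, so its determinant equals $\pm 1$; and $\mathbb{D}_{2r}(q) = \bigl(\begin{smallmatrix} 0 & \I_r \\ \B_r(q) & 0 \end{smallmatrix}\bigr)$ has determinant $\pm \det(\B_r(q)) = \pm q^r$, which is nonzero precisely when $q \neq 0$ (the further restriction $q \neq (-1)^{r+1}$ is already built into the normalization in Lemma~\ref{lem:congruent}).

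Combining these two observations: by Lemma~\ref{lem:congruent} $\E$ is congruent to a direct sum of blocks of the three listed types, and by invertibility of $\E$ no $\B_r(0)$ summand occurs, which yields the claimed form.

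There is no serious obstacle here; the statement is essentially a corollary of Lemma~\ref{lem:congruent} once one records the elementary determinant computations above. The only mild subtlety is being careful that the congruence performed on $\E$ is the same congruence realizing isomorphisms $A(n,\E) \cong A(n,\E')$ in Lemma~\ref{lem:isom}, so that the canonical-form reduction is compatible with the algebraic structure we care about; this is automatic from the definitions.
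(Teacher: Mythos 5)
Your proposal is correct and takes essentially the same route as the paper: the paper's proof is the one-line instruction to apply Theorem~\ref{thm:Zhang} and Lemmas~\ref{lem:isom} and~\ref{lem:congruent}, and your argument simply fills in the (accurate) determinant computations showing that the $\B_r(0)$ blocks are excluded by invertibility of $\E$ while the $\J_n$ and $\mathbb{D}_{2r}(q)$ blocks survive. No gaps.
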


\begin{proof}
Apply Theorem~\ref{thm:Zhang} and Lemmas~\ref{lem:isom} and~\ref{lem:congruent}.
\end{proof}


\subsection{Nakayama automorphisms and skew Calabi-Yau algebras} \label{sec:Nak}
Further background for the material in this section can be found in \cite{RRZ} and~\cite{BrownZhang:Dualizing}. Consider the following definition. 

\begin{definition} \label{def:skewCY} Let $R$ be an algebra with finite injective dimension $d$ on both sides. Let $R^e:=R\otimes R^{op}$. \begin{enumerate}
\item $R$ is called {\it skew Calabi-Yau } if 
\begin{enumerate}
\item[(i)] $R$ is {\it homologically smooth of dimension $d$}, that is, $R$ has a minimal projective resolution in the category $R^e$-Mod of length $d$ such that each term in the projective resolution is finitely generated, and
\item[(ii)] $R$ is {\it rigid Gorenstein}, that is,  there is an
algebra automorphism $\mu$ of $R$,  such that
\begin{equation*}
\Ext^i_{R^e}(R,R^{e})\cong \begin{cases}
^{\mu} R^1& {\text{if}} \quad i=d,
\\ 0& {\text{if}} \quad i\neq d,\end{cases}
\end{equation*}
as $R$-bimodules.
\end{enumerate}
\item The automorphism $\mu_R:=\mu$ is called a
{\it Nakayama automorphism} of $R$.
\item The algebra $R$ is {\it Calabi-Yau} if $\mu$ is inner (or equivalently, if $\mu$ is the identity after changing the generator of the bimodule $^{\mu} R^1$).
\end{enumerate} 
\end{definition}

The definition of $\mu$ is motivated by the classical notion of the Nakayama automorphism of a Frobenius algebra. The Nakayama automorphism $\mu$ is also unique up to an inner automorphism of $R$. Further, if $R$ is connected graded, then the Nakayama automorphism can be chosen to be a graded algebra automorphism, and in this case, it is unique since $R$ has no non-trivial graded inner automorphisms.

\begin{definition}[$\xi_r$] \label{def:rNaka} Let $R$ be a connected
${\mathbb N}$-graded algebra. Take $r\in \kk^\times$.  Define a graded algebra automorphism
$\xi_r$ of $R$ by
$$\xi_r(x)=r^{\deg x} \; x$$
for all homogeneous elements $x\in R$. We say that $R$ is $r$-{\it Nakayama} if  the Nakayama automorphism of $R$ (if it exists) is given by $\xi_r$.
\end{definition}

We have that $R$ is skew Calabi-Yau if and only if $R$ is AS regular \cite[Lemma~1.2]{RRZ}. Moreover, we get that $R$ is Calabi-Yau if and only if $R$ is both $1$-Nakayama and AS regular.
Moreover, consider the preliminary results below.

\begin{lemma} \label{lem:NakAutom} We have the statements below for any AS regular algebra $A:=A(n,\E)$ of global dimension~2.
\begin{enumerate} 
\item $A(n,\E)$ is skew Calabi-Yau.
\item The Nakayama automorphism of $A(n,\E)$  is given by
$\mu_A(v_i) = - \sum_{j=1}^n (\E^{-1}\E^T)_{ij} v_j$. 
\item $A(n,\E)$  is $r$-Nakayama only when $r=\pm 1$.
\item  $A(n,\E)$  is Calabi-Yau \textnormal{(}or $1$-Nakayama\textnormal{)} if and only if $\E$ is skew-symmetric and $n$ is even.
\item $A(n,\E)$  is $(-1)$-Nakayama if and only if $\E$ is symmetric.
\end{enumerate}
\end{lemma}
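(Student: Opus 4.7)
The plan is to prove (a) by citation, derive the explicit formula in (b) via the twisted-superpotential description of quadratic AS regular algebras of global dimension $2$, and then deduce (c)--(e) from (b) by elementary matrix algebra.

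For (a), Theorem~\ref{thm:Zhang} gives that $A(n,\E)$ is AS regular of global dimension $2$, and \cite[Lemma~1.2]{RRZ} (cited in the excerpt) says that a connected $\N$-graded algebra is AS regular if and only if it is skew Calabi-Yau; combining these yields (a).

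For (b), I would view the relation $f = \sum_{ij} e_{ij} v_i v_j$ as coming from the element $w = \sum_{ij} e_{ij}\, v_i \otimes v_j \in V^{\otimes 2}$, where $V = \bigoplus_i \kk v_i$. A quadratic AS regular algebra of global dimension $2$ is determined by a twisted superpotential, and the twist coincides with the Nakayama automorphism: $w$ satisfies $w = -\varphi_\mu(w)$, where $\varphi_\mu(v_i \otimes v_j) := \mu_A(v_j) \otimes v_i$. Comparing the coefficient of $(-)\otimes v_j$ on each side for every $j$ converts this into the matrix identity $\E^T v = -\E\, \mu_A(v)$, with $v = (v_1,\ldots,v_n)^T$; inverting $\E$ then gives the claimed formula. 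An alternative would be to compute $\Ext^2_{A^e}(A,A^e)$ directly from the minimal bimodule resolution $0 \to A(-2) \to A(-1)^n \to A \to 0$ and read off $\mu_A$ from the resulting bimodule twist on ${}^{\mu_A}\!A^1$.

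Parts (c)--(e) then follow from (b) by linear algebra. For (c), if $\mu_A = \xi_r$ then $\mu_A(v_i) = r v_i$ for all $i$, so (b) forces $-\E^{-1}\E^T = r\I_n$, i.e., $\E^T = -r\E$; transposing and re-substituting yields $\E = r^2\E$, so $r^2 = 1$ and $r = \pm 1$. For (d), since $A$ is connected $\N$-graded we may take $\mu_A$ graded, and it is then unique (no nontrivial graded inner automorphisms), so $A$ is Calabi-Yau iff $\mu_A = \id$ iff $A$ is $1$-Nakayama; by (c) with $r = 1$ this forces $\E^T = -\E$, and invertibility of a skew-symmetric matrix requires $n$ even, giving the equivalence. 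For (e), specializing (c) to $r = -1$ gives $\E^T = \E$, i.e., $\E$ symmetric.

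The main obstacle is (b): one must invoke the twisted-superpotential (or bimodule Koszul) formalism for quadratic AS regular algebras of global dimension $2$ and keep careful track of the sign in the twisted cyclicity relation $w = (-1)^{d+1}\varphi_\mu(w)$ together with the precise placement of $\E^{-1}\E^T$ versus $\E^T\E^{-1}$, in a manner consistent with the conventions used in \cite{RRZ} and \cite{Berger}. Once (b) is established, the remaining parts are routine.
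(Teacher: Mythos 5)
Your proposal is correct, and it diverges from the paper's proof in both of its substantive steps. For (b), the paper does not use the twisted-superpotential formalism at all: it observes that ${}^{\mu}A^1 \cong {}^{1}A^{\mu^{-1}}$ as bimodules, quotes the explicit formula $\mu^{-1}(v_i) = -\sum_j v_j(\E^T\E^{-1})_{ji}$ from \cite[Proposition~3.3]{HvOZ}, and inverts; your route through the $\mu$-twisted cyclicity relation $w = (-1)^{d+1}\varphi_\mu(w)$ (or the bimodule Koszul resolution) reaches the same formula, and your sign/convention bookkeeping does land on the paper's $-\E^{-1}\E^T$ — though, as you note, this step carries exactly the convention-sensitivity that the paper sidesteps by citing a ready-made formula. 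For (c)--(e), your argument is actually more direct than the paper's: you read $\E^T = -r\E$ off the identity $-\E^{-1}\E^T = r\I$ and transpose twice to force $r^2=1$, then specialize $r=\pm1$; the paper instead invokes Lemmas~\ref{lem:isom} and~\ref{lem:AS(E)} to reduce to the canonical congruence forms $\J_n$ and $\DD_{2r}(q)$ and checks case by case, remarking that (skew-)symmetry is a congruence invariant. Your version avoids the classification entirely and makes the "only when $r = \pm 1$" claim transparent; the paper's version has the advantage of not depending on the precise form of (b) beyond what the canonical forms exhibit. Your identification of Calabi-Yau with $1$-Nakayama via the absence of nontrivial graded inner automorphisms is exactly the justification the paper records just before Lemma~\ref{lem:NakAutom}, so no gap there.
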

\begin{proof}
(a) This follows from \cite[Lemma~1.2]{RRZ}.

(b) Note that ${}^{\mu}A^1 \cong {}^{1}A^{\mu^{-1}}$ as $A$-bimodules.
Now by \cite[Proposition~3.3]{HvOZ}, the Nakayama automorphism of $A$ is given by
 $\mu^{-1}(v_i) = -\sum_{j=1}^n v_j (\E^T\E^{-1})_{ji} = -\sum_{j=1}^n ((\E^{-1})^T\E)_{ij} v_j $. Then, we take the inverse of $\mu^{-1}$ to get our formula. 

(c) This follows from part (b) and Lemmas \ref{lem:isom} and~\ref{lem:AS(E)} by direct computation.

(d,e) Note that the property of being symmetric or skew-symmetric is invariant under the congruence of matrices. Hence, parts (d,e) follow from part (c) and Lemmas \ref{lem:isom} and~\ref{lem:AS(E)} by direct computation.
\end{proof}


\subsection{Hopf algebra coactions and homological codeterminant} \label{sec:Hopf}

We refer to \cite{Montgomery} and \cite{KKZ:Gorenstein} for further background in this section.  We say that a  Hopf algebra $H$ {\it
coacts} on an algebra $R$ if $R$ arises as an $H$-{\it comodule
algebra}; refer to \cite[Definition~4.1.2]{Montgomery}
for the definition of an $H$-comodule
algebra. 

It is sometimes useful to restrict ourselves to $H$-coactions that do not factor through coactions of `smaller' Hopf subalgebras of $H$. For this, we provide the definition of inner-faithfulness.

\begin{definition} \label{def:innerfaithful}
Let $N$ be a right $H$-comodule
with comodule structure map $\rho: N \rightarrow N \otimes H$. We say
that this coaction $\rho$ is {\it inner-faithful} if $\rho(N)\not\subset
N\otimes H'$ for any proper Hopf subalgebra $H'\subsetneq H$.
\end{definition}

Now we recall an important invariant of Hopf algebra coactions on an AS regular algebra: homological codeterminant. To do so, note that by \cite[Corollary D]{LPWZ:Koszul}, a connected graded algebra $R$ is AS regular if and only
if the Yoneda algebra $E(R):=\bigoplus_{i\geq 0} \Ext^i_R(_R\kk,_R\kk)$ of $R$ is Frobenius. 

\begin{definition}[${\sf D}$] \label{def:hdet}
Let $R$ be an AS regular algebra with Frobenius Yoneda algebra $E(R)$ as above.
Let $H$ be a Hopf algebra with bijective antipode $S$ coacting on
$R$ from the right. We get that $H$ coacts on $E(R)$
from the left by \cite[Remark 1.6(d)]{CWZ:Nakayama}. Suppose ${\mathfrak e}$ is a nonzero element in
$\Ext^d_R(_R\kk,_R\kk)$ where $d=\gldim R$.
The {\it homological codeterminant} of the
$H$-coaction on $R$ is defined to be an element ${\sf
D} \in H$  where $\rho({\mathfrak e})={\sf D}\otimes {\mathfrak e}$. Note that  ${\sf D}$ is a group-like element of $H$. We say the homological
codeterminant is {\it trivial} if ${\sf D}=1_H$.
\end{definition}

Note that the homological codeterminant  of a Hopf coaction on an AS regular algebra is also realized as the {\it quantum determinant} introduced in \cite{Manin:QGNCG}; see \cite[Remark~2.4]{CWZ:Nakayama} for more details. In any case, we can adapt the proof of  \cite[Theorem~2.1]{CKWZ} to obtain the following result (the finite GK-dimension hypothesis is not needed).

\begin{proposition} \label{prop:hcodet} 
Let $A:=\kk\langle v_1, \dots, v_n \rangle/(r)$ be an AS regular algebra of global dimension~2, that admits a right coaction $\rho$ of a Hopf algebra $H$, induced by $\rho: \kk\langle v_1, \dots, v_n \rangle \rightarrow \kk\langle v_1, \dots, v_n \rangle \otimes H$. Then, the homological codeterminant ${\sf D}$ of the $H$-coaction on $A$ is given by $\rho(r) = r \otimes {\sf D}^{-1}$. \qed \end{proposition}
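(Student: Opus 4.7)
The approach is to compute the $H$-coaction on a generator $\mathfrak{e}$ of $\Ext^2_A(\kk,\kk)$ directly via a minimal graded free resolution of the trivial module, following the template of \cite[Theorem~2.1]{CKWZ}. The finite GK-dimension hypothesis used there plays no role in this argument.

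First, I would construct the minimal graded free resolution
$$0 \to A(-2) \xrightarrow{\phi} A(-1)^{\oplus n} \xrightarrow{\psi} A \to \kk \to 0,$$
where $\psi$ sends the $i$-th generator to $v_i$, and (since $r = \sum_{i,j} e_{ij} v_i v_j = \sum_j (\sum_i e_{ij}v_i)v_j$) the map $\phi$ sends the degree-$2$ generator $\tilde{r}$ of $A(-2)$ to the element $\sum_{i,j} e_{ij} v_i \otimes v_j \in A \otimes V$. Such a length-$2$ resolution exists because $A$ is AS regular of global dimension $2$ with a single defining relation.

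Next, I would lift the $H$-coaction to an $H$-equivariant coaction on this resolution: on $A(-1)^{\oplus n} \cong A \otimes V$ the lift is the standard diagonal coaction, and on $A(-2) = A \otimes \kk\tilde{r}$ the coaction on the generator must take the form $\rho(\tilde{r}) = \tilde{r} \otimes f$ for some $f \in H$. Writing $\rho(v_i) = \sum_k v_k \otimes h_{ki}$, a direct diagram-chase shows that $H$-equivariance of $\phi$ is equivalent to the matrix identity $\sum_{i,j} e_{ij} h_{ki} h_{lj} = e_{kl}\, f$ for all $k,l$. The crucial observation is that this \emph{same} identity on the entries $h_{ij}$ is equivalent to $\rho(r) = r \otimes f$ viewed in $T(V) \otimes H$, since the monomials $v_k v_l$ form a basis of $T(V)_2$. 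Thus the element $f \in H$ twisting the coaction on $\tilde{r}$ coincides with the $f$ twisting the coaction on $r$ itself.

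Finally, I would apply $\operatorname{Hom}_A(-,\kk)$ to the resolution. Minimality forces the induced differentials to vanish, so $\Ext^2_A(\kk,\kk) \cong \kk\mathfrak{e}$, with $\mathfrak{e}$ dual to $\tilde{r}$. Since $f$ is group-like (it arises from a coaction on a one-dimensional comodule), the induced \emph{left} $H$-coaction on the dual sends $\mathfrak{e}$ to $S(f) \otimes \mathfrak{e} = f^{-1} \otimes \mathfrak{e}$. By Definition~\ref{def:hdet}, this identifies ${\sf D} = f^{-1}$, equivalently $\rho(r) = r \otimes {\sf D}^{-1}$. The main bookkeeping obstacle will be handling the convention switches carefully: the coaction on $A$ is right whereas that on $E(A)$ is left, and the dualization of a group-like comodule introduces the antipode; once these are pinned down, the proof reduces to the observation that one matrix identity simultaneously governs both coactions of interest.
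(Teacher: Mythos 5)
Your proposal is correct and takes essentially the same route the paper intends: the paper gives no written proof, instead deferring to an adaptation of \cite[Theorem~2.1]{CKWZ}, and your argument (minimal resolution $0 \to A(-2) \to A(-1)^{\oplus n} \to A \to \kk \to 0$, equivariant lift forcing $\mathbb{B}\E\mathbb{B}^T = \E f$, dualization introducing $S(f)=f^{-1}$) is precisely that adaptation, consistent with how the paper later uses the result in Lemmas~\ref{lem:B(E)coact} and~\ref{L:quantumGrp}. Nothing further is needed.
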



\section{Universal quantum linear groups} \label{sec:quantumlinear}

In this section, we discuss several universal quantum linear groups that coact on $A:=A(\E)$, in particular $\mc{O}_A(GL)$ and $\mc{O}_A(SL)$ due to Manin [Section~\ref{subsec:OA}], and $\mc{G}(\E^{-1},\F)$ and $\mc{B}(\E^{-1})$ due to Mrozinski, and Dubois-Violette and Launer, respectively [Section~\ref{subsec:B(E)}].


\subsection{Manin's quantum groups} \label{subsec:OA}
In this subsection, we discuss Manin's quantum linear groups introduced in \cite{Manin:QGNCG}, applied to $A:=A(n,\E)$.
Recall that  $A$ is a quadratic algebra
generated by
$v_1,v_2,\dots,v_n$ in degree 1, subject to the relation
$$r:=\sum_{1 \leq i,j \leq n} e_{ij} v_i v_j=0,$$
 where $(e_{ij}) =\E \in GL_n(\kk)$.
Let $\mc{F}$ be a free algebra generated by $\{a_{ij}\}_{1\leq i,j\leq n}$.
Consider a bialgebra structure on $\mc{F}$  defined by
$\Delta(a_{ij})=\sum_{s=1}^n a_{is}\otimes a_{sj}$ and
$\ee(a_{ij})=\delta_{ij}$,
for all $1\leq i,j \leq n$. The free
algebra $A':=\kk\langle v_1,\cdots,v_n\rangle$ is a right $\mc{F}$-comodule
algebra with comodule structure map $\rho: A'\to A'\otimes \mc{F}$ determined by
$\rho(v_i)=\sum_{s=1}^n v_s\otimes a_{si}$
for $1\leq i \leq n$.

The Koszul dual of $A$, denoted by $A^!$, is an
algebra $\kk\langle V^*\rangle/(R^{\perp})$ where $R^{\perp}$ is the
subspace \linebreak
$R^{\perp}:=\{ u\in (V^*)^{\otimes 2}\mid \langle u, r
\rangle=0\}.$ By duality,
$R:=\kk r =\{ w\in V^{\otimes 2}\mid \langle R^{\perp}, w\rangle=0\}.$
Pick a basis for $R^{\perp}$, say $r'_{t}$ for $t =1,\cdots, n^2-1$,
and write
$$r'_{t}= \sum_{1 \leq i,j \leq n}d_{ij}^{(t)} v^*_i v^*_j$$
for all $1\leq t \leq n^2-1$. By the definition of $R^{\perp}$, we
have that
$
\sum_{1\le i,j\le n} d_{ij}^{(t)} e_{ij}=0
$
for all $t$. Consider the following result.

\begin{lemma} \cite[Lemmas 5.5 and 5.6]{Manin:QGNCG}\label{lem:quadraticfactor}
 Let $H$ be a bialgebra coacting on the free algebra
$A'= \kk \langle v_1, \dots, v_n \rangle$ with $\rho(v_i)=\sum_{s=1}^n
v_s \otimes b_{si}$ for $b_{si}\in H$. Then, the following are
equivalent.
\begin{enumerate}
\item
The coaction $\rho : A' \longrightarrow A' \otimes H$ satisfies $\rho \left( \kk r
\right) \subseteq \kk r \otimes H$, that is, 
$\sum_{1\le i,j,k,l\le n} e_{ij} d_{kl}^{(t)} b_{ki}b_{lj}=0$
for all $t$.

\item
The map $\rho$ induces naturally a coaction of $H$ on $A$ such that
$A$ is a right $H$-comodule algebra. \qed
\end{enumerate}
\end{lemma}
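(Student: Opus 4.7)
The plan is to reduce both implications to a single key equivalence, namely that the index-laden identity in (a) is precisely the coordinate form of the condition $\rho(r) \in \kk r \otimes H$.

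To establish this key equivalence, I would first unwind $\rho(r)$ using that $\rho$ is an algebra map together with the defining formula $\rho(v_i) = \sum_s v_s \otimes b_{si}$, obtaining
$$\rho(r) \;=\; \sum_{k,l} v_k v_l \otimes \Bigl(\sum_{i,j} e_{ij}\, b_{ki} b_{lj}\Bigr).$$
Since the degree-$2$ monomials $v_k v_l$ form a basis of $A'_2 = V^{\otimes 2}$, the one-dimensional subspace $\kk r$ is precisely the annihilator in $V^{\otimes 2}$ of $R^\perp = \textnormal{span}\{r'_t\}$. Concretely, an element $\sum_{k,l} v_k v_l \otimes h_{kl}$ of $V^{\otimes 2} \otimes H$ lies in $\kk r \otimes H$ if and only if $\sum_{k,l} d^{(t)}_{kl}\, h_{kl} = 0$ in $H$ for each $t$. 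Applied to $h_{kl} = \sum_{i,j} e_{ij} b_{ki} b_{lj}$, this recovers exactly the identity in (a).

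With the key equivalence in hand, (b) $\Rightarrow$ (a) is immediate: if $\rho$ descends to a coaction on $A = A'/(r)$, then $\rho(r)$ must vanish in $A \otimes H$; since $\rho$ preserves the grading (the $b_{ij}$ are of degree $0$), we have $\rho(r) \in A'_2 \otimes H$, and the kernel of $A'_2 \twoheadrightarrow A_2$ is exactly $\kk r$. Hence $\rho(r) \in \kk r \otimes H$, giving (a). For (a) $\Rightarrow$ (b), the equivalence yields $\rho(r) = r \otimes x$ for some $x \in H$ (using $\dim \kk r = 1$). Then for any $a, b \in A'$,
$$\rho(arb) \;=\; \rho(a)\,\rho(r)\,\rho(b) \;=\; \rho(a)(r \otimes x)\rho(b) \;\in\; (r) \otimes H,$$
so the two-sided ideal $(r)$ is $\rho$-stable. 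Thus $\rho$ descends to a well-defined algebra map $\bar\rho: A \to A \otimes H$, which inherits coassociativity and the counit identity from $\rho$, making $A$ a right $H$-comodule algebra.

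The only real obstacle is the bookkeeping in the key equivalence: one must handle the Koszul-dual pairing carefully and use the fact that $\kk r$ is cut out in $V^{\otimes 2}$ precisely by $R^\perp$ (following from $\dim \kk r = 1$ and $\dim R^\perp = n^2 - 1$). Beyond that, everything is a formal consequence of $\rho$ being an algebra homomorphism, and the argument goes through for any bialgebra $H$, with no appeal to an antipode.
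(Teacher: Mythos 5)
Your argument is correct: the computation of $\rho(r)$, the identification of $\kk r\otimes H$ inside $V^{\otimes 2}\otimes H$ via the vanishing of the functionals $r'_t\in R^{\perp}$, and the observation that $\rho(r)\in\kk r\otimes H$ forces the two-sided ideal $(r)$ to be $\rho$-stable (and conversely, using that $\rho$ preserves degree and $(r)_2=\kk r$) together give exactly the stated equivalence. The paper itself supplies no proof, citing Manin's Lemmas 5.5 and 5.6 instead, and your write-up is the standard argument that citation points to.
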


Now we can define quantum linear groups associated to $A$. Let $\mc{I}$ be the ideal of the free bialgebra $\mc{F}$ generated by elements 
$$\sum\limits_{1\le i,j,k,l\le n} e_{ij}d_{kl}^{(t)} a_{ki}a_{lj}$$ for all $t$.

\begin{definition}[$\mc{O}_A(M)$, $\mc{O}_A(GL)$] \cite[Section 5.3 and Chapter~7]{Manin:QGNCG} \label{def:OA(M)}
Consider the following terms.
\begin{enumerate}
\item The {\it quantum matrix
space associated to $A$} is defined to be ${\mathcal O}_A(M) =$$\mc{F}/\mc{I}$, which is
a bialgebra quotient of $\mc{F}$.
\item The {\it Hopf envelope} of a bialgebra $B$ is a Hopf algebra $\mc{H}(B)$ equipped with a bialgebra map\linebreak  $\phi: B\to \mc{H}(B)$, so that for any Hopf algebra $H'$, any  bialgebra map $\psi: B\to H'$ extends uniquely to a Hopf algebra map $\hat{\psi}: \mc{H}(B)\to H'$ with $\psi=\hat{\psi}\circ \phi$ as bialgebra maps. \item The {\it quantum general linear group associated
to} $A$ is defined to be the Hopf envelope of ${\mathcal O}_A(M)$,
and is denoted by ${\mathcal O}_{A}(GL)$. 
\end{enumerate}
\end{definition}

Moreover, we introduce the following quotient of ${\mathcal O}_{A}(GL)$, which plays a central role in this work.

\begin{definition}[$\mc{O}^c_A(GL)$] \label{def:OcA(GL)} 
The {\it quantum general linear group associated
to} $A$ {\it with central quantum determinant} is defined to be the Hopf quotient ${\mathcal O}^c_{A}(GL):=\mc{O}_A(GL)/\mc{I}_{{\sf D}}$, where $\mc{I}_{\sf D}$ is the Hopf ideal generated by $\{a{\sf D} - {\sf D}a ~|~ a \in \mc{O}_A(GL)\}$.  
\end{definition}

Abusing notation, we use $a_{ij}$ to denote the image of the generators $a_{ij} $ of
$\mathcal{O}_A(M)$ in ${\mathcal O}_{A}(GL)$ and in ${\mathcal O}^c_{A}(GL)$. We see that $\mc{O}_A(M)$ (resp., $\mc{O}_A(GL)$, $\mc{O}^c_A(GL)$) serves as the universal bialgebra (resp., Hopf algebras) that
coacts on $A$ as follows.

\begin{lemma} \label{lem:OA(M)}
Suppose that the right coaction of $\mc{O}_A(M)$ on $A$ is given by $\rho_{\mc{O}}: A \rightarrow A \otimes \mc{O}_A(M)$. Then, for any bialgebra $B$ that coacts on $A$
from the right, via $\rho_{B}:A \rightarrow A \otimes B$, there exists a unique bialgebra map $\gamma: \mc{O}_A(M) \rightarrow B$ such that $\rho_{B} = (Id \otimes \gamma) \circ \rho_{\mc{O}}$.
\end{lemma}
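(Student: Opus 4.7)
The plan is to exploit the fact that $\mc{O}_A(M) = \mc{F}/\mc{I}$ is a quotient of the free bialgebra $\mc{F}$ on generators $a_{ij}$, and that Lemma~\ref{lem:quadraticfactor} characterizes precisely which relations a bialgebra coacting on $A'=\kk\langle v_1,\dots,v_n\rangle$ must satisfy in order to induce a coaction on $A$. Since the $B$-coaction is assumed to preserve the grading of $A$, we may write $\rho_B(v_i) = \sum_{s=1}^n v_s \otimes b_{si}$ for uniquely determined elements $b_{si} \in B$. This produces the right candidate image for the $a_{ij}$ under $\gamma$.

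First I would build $\gamma$ in two steps. At the free level, define an algebra map $\tilde\gamma : \mc{F} \to B$ by $\tilde\gamma(a_{ij}) = b_{ij}$, using the universal property of the free algebra $\mc{F}$. Checking that $\tilde\gamma$ is a bialgebra map amounts to verifying $(\tilde\gamma\otimes\tilde\gamma)\circ \Delta = \Delta_B \circ \tilde\gamma$ and $\ee_B \circ \tilde\gamma = \ee$ on the generators $a_{ij}$; both follow from the bialgebra axioms applied to $\rho_B$ (coassociativity of $\rho_B$ on $v_i$ gives $\Delta_B(b_{ij}) = \sum_s b_{is}\otimes b_{sj}$, and counitality gives $\ee_B(b_{ij}) = \delta_{ij}$). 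Next I would argue that $\tilde\gamma$ descends to $\mc{O}_A(M) = \mc{F}/\mc{I}$: since $\rho_B$ is well-defined on $A$, Lemma~\ref{lem:quadraticfactor} applied to $B$ yields $\sum_{i,j,k,l} e_{ij} d_{kl}^{(t)}\, b_{ki} b_{lj} = 0$ in $B$ for every $t$, which means $\tilde\gamma$ annihilates the generators of $\mc{I}$. Hence $\tilde\gamma$ factors through a bialgebra map $\gamma : \mc{O}_A(M)\to B$ with $\gamma(a_{ij}) = b_{ij}$.

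To check the compatibility $\rho_B = (\id \otimes \gamma)\circ \rho_{\mc{O}}$, it suffices to verify it on the algebra generators $v_i \in A$, where both sides equal $\sum_s v_s \otimes b_{si}$; compatibility on all of $A$ then follows because both $\rho_B$ and $(\id\otimes \gamma)\circ\rho_{\mc{O}}$ are algebra maps. For uniqueness, any bialgebra map $\gamma'$ satisfying $\rho_B = (\id\otimes\gamma')\circ \rho_{\mc{O}}$ must send $a_{ij}$ to $b_{ij}$ (comparing coefficients of $v_s$ on the level-$1$ component), and since $\mc{O}_A(M)$ is generated as an algebra by $\{a_{ij}\}$, this pins down $\gamma' = \gamma$.

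The only substantive point beyond bookkeeping is verifying that the coaction $\rho_B$ indeed delivers the matrix coefficients $b_{ij}$ with the correct bialgebra structure \emph{and} that the grading-preservation hypothesis lets us write $\rho_B(v_i)$ in the form used above; once these are in hand, everything else is forced by the freeness of $\mc{F}$ and the defining relations of $\mc{I}$. I do not expect any genuine obstacle: this is a routine verification of a universal property, and the key input (the ideal $\mc{I}$ is exactly what must vanish) is built into the definition of $\mc{O}_A(M)$ via Lemma~\ref{lem:quadraticfactor}.
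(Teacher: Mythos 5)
Your proposal is correct and follows essentially the same route as the paper's proof: define $\gamma(a_{ij}) = b_{ij}$, use Lemma~\ref{lem:quadraticfactor} together with the definition of $\mc{I}$ to see that $\gamma$ is well-defined on $\mc{O}_A(M) = \mc{F}/\mc{I}$, check compatibility on generators, and deduce uniqueness from the fact that the $a_{ij}$ generate. The paper simply compresses the verifications you spell out (that the $b_{ij}$ form a matrix coalgebra, that $\tilde\gamma$ kills $\mc{I}$) into one line, so you have filled in exactly the intended details.
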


\begin{proof}
Suppose $\rho_{B}(v_i)=\sum_{s=1}^{n}v_s\otimes b_{si}$ for all $b_{si}\in B$. We define a bialgebra map $\gamma: \mc{O}_A(M)\to B$ by  $\gamma(a_{ij})=b_{ij}$. By Definition \ref{def:OA(M)} and Lemma \ref{lem:quadraticfactor}, we know the map $\gamma$ is well-defined. Moreover, we have $\rho_{B} = (Id \otimes \gamma) \circ \rho_{\mc{O}}$ by checking this on the generators. Finally, the uniqueness of $\gamma$ is clear.
\end{proof}

\begin{lemma} \cite[Section~7.5]{Manin:QGNCG} \cite[Lemma~2.7]{CWZ:Nakayama}  \label{lem:OA(GL)}
Suppose that the right coaction of $\mc{O}_A(GL)$ on $A$ is given by $\rho_{\mc{O}}: A \rightarrow A \otimes \mc{O}_A(GL)$. Then, for any Hopf algebra $H$ that coacts on $A$
from the right, via $\rho_{H}:A \rightarrow A \otimes H$, we have the following statements. 
\begin{enumerate}
\item There is a unique Hopf algebra map $\gamma: \mc{O}_A(GL) \rightarrow H$ such that $\rho_{H} = (Id \otimes \gamma) \circ \rho_{\mc{O}}$.
\item Write $\rho_{H}(v_i)=\sum_{s=1}^n v_s\otimes b_{si}$
for some $b_{si}\in H$. Then, the $H$-coaction on $A$ is inner-faithful if and
only if,  for all $1\leq i,j\leq n$, the map $\gamma: a_{ij}\to b_{ij}$
induces a surjective Hopf algebra map from ${\mathcal
O}_{A}(GL)$ to $H$. \qed
\end{enumerate}
\end{lemma}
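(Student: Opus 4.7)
The plan is to deduce (a) from Lemma \ref{lem:OA(M)} together with the universal property of the Hopf envelope, and then to derive (b) from (a) by analyzing the image of $\gamma$.

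For (a), since $H$ is a bialgebra coacting on $A$, Lemma \ref{lem:OA(M)} produces a unique bialgebra map $\gamma_0: \mc{O}_A(M) \to H$ satisfying $\rho_H = (\mathrm{Id} \otimes \gamma_0) \circ \rho_{\mc{O}_A(M)}$. Because $\mc{O}_A(GL)$ is by construction the Hopf envelope of $\mc{O}_A(M)$ (Definition \ref{def:OA(M)}(b),(c)), and $H$ is a Hopf algebra, the universal property of the Hopf envelope yields a unique Hopf algebra map $\gamma: \mc{O}_A(GL) \to H$ with $\gamma \circ \phi = \gamma_0$, where $\phi: \mc{O}_A(M) \to \mc{O}_A(GL)$ is the canonical bialgebra map. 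Since $\rho_{\mc{O}} = (\mathrm{Id} \otimes \phi) \circ \rho_{\mc{O}_A(M)}$ on generators, the identity $(\mathrm{Id} \otimes \gamma) \circ \rho_{\mc{O}} = \rho_H$ reduces to a check on the $v_i$, which is immediate. Uniqueness of $\gamma$ follows because any competitor would compose with $\phi$ to yield a second bialgebra-level lift of $\gamma_0$.

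For (b), the image $\gamma(\mc{O}_A(GL))$ is a Hopf subalgebra of $H$, and it coincides with the smallest Hopf subalgebra of $H$ containing the elements $b_{ij} := \gamma(a_{ij})$, since $\mc{O}_A(GL)$ is generated as a Hopf algebra by the $a_{ij}$. As $\rho_H(v_i) = \sum_s v_s \otimes b_{si}$, the $H$-coaction on the generators of $A$ lands in $A \otimes \gamma(\mc{O}_A(GL))$, and multiplicativity of $\rho_H$ propagates this to all of $A$. Thus $\rho_H$ factors through the Hopf subalgebra $\gamma(\mc{O}_A(GL))$. If $\rho_H$ is inner-faithful, this forces $\gamma(\mc{O}_A(GL)) = H$, i.e., $\gamma$ is surjective. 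Conversely, if $\gamma$ is surjective and $\rho_H(A) \subset A \otimes H'$ for some Hopf subalgebra $H' \subset H$, evaluating on each $v_i$ forces $b_{ij} \in H'$; hence $H = \gamma(\mc{O}_A(GL)) \subset H'$, so $H' = H$ and the coaction is inner-faithful.

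The main (though mild) obstacle is pinning down that $\gamma(\mc{O}_A(GL))$ equals the Hopf subalgebra of $H$ generated by the $b_{ij}$; once this generation statement is in hand, both directions of (b) are immediate. Everything else is a diagram chase built on the two universal properties (of $\mc{O}_A(M)$ as a bialgebra and of $\mc{O}_A(GL)$ as its Hopf envelope).
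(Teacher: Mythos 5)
Your argument is correct and is essentially the one the paper delegates to its references (the lemma is stated with citations and a \qed, and the cited proof likewise combines the universal property of $\mc{O}_A(M)$ from Lemma~\ref{lem:OA(M)} with the universal property of the Hopf envelope). The one step you flag but do not prove --- that $\mc{O}_A(GL)$ is generated as a Hopf algebra by the $a_{ij}$, so that $\gamma(\mc{O}_A(GL))$ is the smallest Hopf subalgebra of $H$ containing the $b_{ij}$ --- follows directly from the universal property in Definition~\ref{def:OA(M)}(b): the smallest Hopf subalgebra $K$ of $\mc{O}_A(GL)$ containing $\phi(\mc{O}_A(M))$ admits a map $\mc{O}_A(GL)\to K$ extending $\phi$, and composing with the inclusion $K\hookrightarrow \mc{O}_A(GL)$ must give the identity by uniqueness, forcing $K=\mc{O}_A(GL)$.
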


The following result is clear from the lemma above.

\begin{lemma} \label{lem:OcA(GL)}
Suppose that the right coaction of $\mc{O}^c_A(GL)$ on $A$ is given by $\rho_{\mc{O}^c}: A \rightarrow A \otimes \mc{O}^c_A(GL)$. Then, for any Hopf algebra $H$ that coacts on $A$
from the right with central homological codeterminant, via $\rho_{H}:A \rightarrow A \otimes H$, we have the following statements. 
\begin{enumerate}
\item There is a unique Hopf algebra map $\gamma: \mc{O}^c_A(GL) \rightarrow H$ such that $\rho_{H} = (Id \otimes \gamma) \circ \rho_{\mc{O}^c}$.
\item Write $\rho_{H}(v_i)=\sum_{s=1}^n v_s\otimes b_{si}$
for some $b_{si}\in H$. Then, the $H$-coaction on $A$ is inner-faithful if and
only if,  for all $1\leq i,j\leq n$, the map $\gamma: a_{ij}\to b_{ij}$
induces a surjective Hopf algebra map from ${\mathcal
O}^c_{A}(GL)$ to $H$. \qed
\end{enumerate}
\end{lemma}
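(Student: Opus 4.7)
The plan is to derive this from Lemma~\ref{lem:OA(GL)} together with the universal property of the Hopf quotient $\mc{O}^c_A(GL) = \mc{O}_A(GL)/\mc{I}_{\sf D}$. Given an $H$-coaction on $A$ with central homological codeterminant ${\sf D}_H$, Lemma~\ref{lem:OA(GL)}(a) supplies a unique Hopf algebra map $\tilde{\gamma}: \mc{O}_A(GL) \to H$ satisfying $\rho_H = (Id \otimes \tilde{\gamma}) \circ \rho_{\mc{O}_A(GL)}$. The task then reduces to showing $\tilde{\gamma}$ kills the ideal $\mc{I}_{\sf D}$ generated by commutators with the quantum determinant ${\sf D} \in \mc{O}_A(GL)$, so that it descends to the desired $\gamma$.

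The key step is the identification $\tilde{\gamma}({\sf D}) = {\sf D}_H$. To establish this I would invoke Proposition~\ref{prop:hcodet} for both coactions: applying $Id \otimes \tilde{\gamma}$ to $\rho_{\mc{O}_A(GL)}(r) = r \otimes {\sf D}^{-1}$ and comparing with $\rho_H(r) = r \otimes {\sf D}_H^{-1}$ yields $\tilde{\gamma}({\sf D}^{-1}) = {\sf D}_H^{-1}$, hence $\tilde{\gamma}({\sf D}) = {\sf D}_H$. Since ${\sf D}_H$ is central in $H$ by hypothesis, every generator $a{\sf D} - {\sf D}a$ of $\mc{I}_{\sf D}$ maps under $\tilde{\gamma}$ to $\tilde{\gamma}(a){\sf D}_H - {\sf D}_H\tilde{\gamma}(a) = 0$. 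Thus $\tilde{\gamma}$ factors uniquely through the quotient, giving a Hopf algebra map $\gamma: \mc{O}^c_A(GL) \to H$ with the required compatibility. Uniqueness of $\gamma$ follows from the uniqueness of $\tilde{\gamma}$ together with the surjectivity of the quotient map $\mc{O}_A(GL) \twoheadrightarrow \mc{O}^c_A(GL)$.

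For part (b), the argument is a direct transcription of the proof of Lemma~\ref{lem:OA(GL)}(b), with $\gamma$ in place of the map from $\mc{O}_A(GL)$. If $\gamma$ is surjective, then $H$ is generated as a Hopf algebra by the elements $b_{ij} = \gamma(a_{ij})$; for any Hopf subalgebra $H' \subseteq H$ with $\rho_H(A) \subseteq A \otimes H'$, the explicit form $\rho_H(v_i) = \sum_s v_s \otimes b_{si}$ forces each $b_{ij} \in H'$, whence $H' = H$, establishing inner-faithfulness. Conversely, if the coaction is inner-faithful, then the Hopf subalgebra $H_0 \subseteq H$ generated by $\{b_{ij}\}$ satisfies $\rho_H(A) \subseteq A \otimes H_0$ (as $\rho_H$ is an algebra map sending generators into $A \otimes H_0$), so $H_0 = H$, which is exactly surjectivity of $\gamma$. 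There is no real obstacle here beyond the centrality check in the previous paragraph; once $\tilde{\gamma}$ is seen to factor, everything else is formal.
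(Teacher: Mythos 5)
Your proof is correct and follows the same route the paper intends: the paper simply declares the lemma ``clear from the lemma above,'' i.e.\ from the universal property of $\mc{O}_A(GL)$, and your argument is exactly the expected fleshing-out --- factor $\tilde{\gamma}$ through $\mc{O}_A(GL)/\mc{I}_{\sf D}$ after checking $\tilde{\gamma}({\sf D})={\sf D}_H$ via Proposition~\ref{prop:hcodet} and the centrality of ${\sf D}_H$. The part~(b) argument is likewise the standard one transcribed from Lemma~\ref{lem:OA(GL)}(b).
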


Now we introduce several other quantum groups
associated to $A$. 

\begin{definition}[${\mathcal O}_{A}(SL)$, $\mathcal{O}_{A}(GL/S^{2m})$, $\mathcal{O}_{A}(SL/S^{2m})$, $\mathcal{O}_{A}(GL/S^\infty)$, $\mathcal{O}_A(SL/S^\infty)$, ${\mathcal O}^c_{A}(\ast)$] \label{def:OA(SL)}
Let ${\sf D}$ be the homological codeterminant of the ${\mathcal O}_A(GL)$-coaction on $A$. Inside ${\mathcal O}_A(GL)$, let $\mc{L}_{2m}$ be the Hopf ideal generated by the elements $\{S^{2m}(a)-a~|~ a\in {\mathcal O}_{A}(GL)\}$ for any integer $m\ge 1$.
\begin{enumerate}
\item \cite[Section 8.5]{Manin:QGNCG}
The {\it quantum special linear group associated to} $A$, denoted by
${\mathcal O}_{A}(SL)$, is defined to be the Hopf algebra quotient
${\mathcal O}_{A}(GL)/({\sf D}-1_{{\mathcal O}_{A}(GL)})$.
\item
The {\it quantum $S^{2m}$-trivial
general linear group associated to} $A$, denoted by ${\mathcal
O}_{A}(GL/S^{2m})$, is defined to be the Hopf algebra quotient
${\mathcal O}_{A}(GL)/\mc{L}_{2m}$.
\item The {\it quantum $S^\infty$-trivial general
linear group associated to} $A$, denoted by ${\mathcal
O}_{A}(GL/S^\infty)$, is defined to be the projective limit of Hopf algebras  
$\varprojlim\limits_{m\in \mathbb Z_{>0}} \mathcal {O}_{A}(GL/S^{2m})$. 
\item Similarly, we can define {\it quantum $S^{2m}$-trivial special
linear group associated to} $A$, namely ${\mathcal
O}_{A}(SL/S^{2m})$ and the {\it quantum $S^\infty$-trivial special
linear group}, denoted as ${\mathcal
O}_{A}(SL/S^\infty)$, by replacing ${\mathcal
O}_{A}(GL)$ with ${\mathcal O}_{A}(SL)$ in parts (b,c).
\item Moreover, we have versions of the quantum groups above so that the coaction on $A$ has central homological codeterminant. Denote such a quantum group by replacing $\mathcal{O}_A(\ast)$ with $\mathcal{O}^c_A(\ast)$.
\end{enumerate}
\end{definition}

\begin{remark} \label{rem:OASinfty}
(1) The projective limits for $\mc{O}_A(GL/S^{\infty})$,  $\mc{O}_A^c(GL/S^{\infty})$, and $\mc{O}_A(SL/S^{\infty})$ are taken in the category of coalgebras over the direct set $\mathbb Z_{>0}$ where $n<m$ if $n\mid m$; see reference \cite[Theorem 1.7]{Agore:LimitsHopfalgebra}. By the universal property, there is a unique Hopf algebra map from $\mc{O}_A(GL)$ to $\mc{O}_A(GL/S^{\infty})$, which factors through all the projections $\mc{O}_A(GL)\twoheadrightarrow\mc{O}_A(GL/S^{2m})$ for $m\ge 1$. Similar statements hold for $\mc{O}_A^c(GL/S^{\infty})$ and $\mc{O}_A(SL/S^{\infty})$.  

(2) We will see later in Lemma \ref{L:quantumGrp}(c) and Corollary~\ref{C:SLGL}(c) that the finite dimensional generating subspace $\bigoplus_{1\le i,j\le n} \kk a_{ij}$ of $\mc{O}^{c}_A(GL)$ is invariant under the linear operator $S^{2m} - I$. Moreover, the chain of images
$$\cdots\supseteq \bigcap_{1\le i\le m}\text{im}(S^{2i} - I) \supseteq  \bigcap_{1\le i\le m+1}\text{im}(S^{2i} - I)  \supseteq \dots$$ stabilizes for $m\gg 0$ in $\mc{O}^{c}_A(GL)$. Hence, $\mc{O}^{c}_A(GL/S^{\infty})$ is equal to $\mc{O}^{c}_A(GL/S^{2m})$ for some $m \gg 0$. See Proposition \ref{prop:GLJord} and Proposition \ref{prop:GLDq}, for instance. Similar statements hold for $\mc{O}_A(SL/S^\infty)$. 
\end{remark}

Now we discuss the universal property of the quantum groups above. By Remark \ref{rem:OASinfty}(1), the $S^\infty$-trival quantum group ${\mathcal O}_{A}(GL/S^\infty)$  coacts on $A$ from the right via the unique Hopf algebra map $\mc{O}_A(GL)\to \mc{O}_A(GL/S^{\infty})$.  Abusing notation, we still consider $\{a_{ij}\}_{1\le i,j\le n}$ as elements of ${\mathcal O}_{A}(GL/S^\infty)$ via the unique map above. The  lemma below follows from the universal
property of ${\mathcal O}_{A}(GL)$ in Lemma~\ref{lem:OA(GL)}.

\begin{lemma} \label{lem:OA(SL)}
Suppose that the right coaction of $\mc{O}_A(GL/S^\infty)$ on $A$ is given by $\rho_{\mc{O}^\infty}: A \rightarrow A \otimes \mc{O}_A(GL/S^\infty)$. Then, for any Hopf algebra $H$, with antipode of finite order, coacting on $A$, we have the following statements.
\begin{enumerate}
\item[(a)] There is a unique Hopf algebra map $\gamma: \mc{O}_A(GL/S^\infty) \rightarrow H$ such that $\rho_{H} = (Id \otimes \gamma) \circ \rho_{\mc{O}^\infty}$.
\item[(b)] Write $\rho_{H}(v_i)=\sum_{s=1}^n v_s\otimes b_{si}$
for some $b_{si}\in H$. Then, the $H$-coaction on $A$ is inner-faithful if and
only if,  for all $1\leq i,j\leq n$, the map $\gamma: a_{ij}\to b_{ij}$
induces a surjective Hopf algebra map from ${\mathcal
O}_{A}(GL/S^\infty)$ to $H$.
\qed
\end{enumerate}
\end{lemma}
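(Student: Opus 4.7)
The plan is to deduce both statements from Lemma~\ref{lem:OA(GL)} by showing that, whenever $S_H$ has finite order, the Hopf algebra map $\mc{O}_A(GL)\to H$ produced there descends uniquely through the quotient $\mc{O}_A(GL/S^\infty)$.

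For part (a), I would first invoke Lemma~\ref{lem:OA(GL)}(a) to obtain a unique Hopf algebra map $\tilde\gamma\colon\mc{O}_A(GL)\to H$ satisfying $\rho_H=(\mathrm{Id}\otimes\tilde\gamma)\circ\rho_{\mc{O}}$. By the finite-order hypothesis, there is $m_0\ge 1$ with $S_H^{2m_0}=\mathrm{Id}_H$. Since $\tilde\gamma$ intertwines antipodes, one computes $\tilde\gamma(S^{2m_0}(a)-a)=S_H^{2m_0}(\tilde\gamma(a))-\tilde\gamma(a)=0$ for every $a\in\mc{O}_A(GL)$, so $\tilde\gamma$ vanishes on the generators of the Hopf ideal $\mc{L}_{2m_0}$, and hence on all of $\mc{L}_{2m_0}$. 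Thus it factors as a Hopf algebra map $\gamma_{m_0}\colon\mc{O}_A(GL/S^{2m_0})\to H$. Composing $\gamma_{m_0}$ with the canonical map $\mc{O}_A(GL/S^\infty)\to\mc{O}_A(GL/S^{2m_0})$ supplied by Remark~\ref{rem:OASinfty}(1) then produces the desired $\gamma\colon\mc{O}_A(GL/S^\infty)\to H$. Uniqueness of $\gamma$ follows because the canonical quotient map $\mc{O}_A(GL)\twoheadrightarrow\mc{O}_A(GL/S^\infty)$ is surjective on the generators $a_{ij}$: any two candidate $\gamma$'s that recover $\rho_H$ must agree on every image of $a_{ij}$, and therefore on all of $\mc{O}_A(GL/S^\infty)$.

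For part (b), I would appeal to Lemma~\ref{lem:OA(GL)}(b): the $H$-coaction on $A$ is inner-faithful if and only if $\tilde\gamma$ is surjective. Since $\tilde\gamma$ factors as $\mc{O}_A(GL)\twoheadrightarrow\mc{O}_A(GL/S^\infty)\xrightarrow{\gamma}H$, and the intermediate map is itself surjective (because $\mc{O}_A(GL/S^\infty)$ is realized as the quotient of $\mc{O}_A(GL)$ by the directed union $\bigcup_m\mc{L}_{2m}$ of Hopf ideals, using that $\mc{L}_{2m}\subseteq\mc{L}_{2m'}$ whenever $m\mid m'$), surjectivity of $\tilde\gamma$ is equivalent to surjectivity of $\gamma$. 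This final condition is precisely the statement that the elements $b_{ij}=\gamma(a_{ij})$ generate $H$ as a Hopf algebra.

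The step I expect to be the main obstacle is making precise the colimit-style universal property that lurks behind the name ``projective limit'' in the construction of $\mc{O}_A(GL/S^\infty)$, together with the surjectivity of $\mc{O}_A(GL)\to\mc{O}_A(GL/S^\infty)$; both the descent in (a) and the equivalence of surjectivities in (b) hinge on these. Once that formalism is in place (appealing to \cite{Agore:LimitsHopfalgebra} as in Remark~\ref{rem:OASinfty}(1)), the remainder is a routine diagram chase through the universal properties already established in Lemmas~\ref{lem:OA(M)} and~\ref{lem:OA(GL)}.
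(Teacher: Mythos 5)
Your overall strategy is the right one and is essentially what the paper intends (the paper itself offers no argument beyond citing the universal property of $\mc{O}_A(GL)$ from Lemma~\ref{lem:OA(GL)}): use the finite order of $S_H$ to see that the universal map $\tilde\gamma\colon\mc{O}_A(GL)\to H$ kills $\mc{L}_{2m_0}$, factor it through $\mc{O}_A(GL/S^{2m_0})$, and precompose with the projection $\pi_{m_0}\colon\mc{O}_A(GL/S^\infty)\to\mc{O}_A(GL/S^{2m_0})$. The existence half of (a) is correct as written, including the check that $\gamma\circ\iota=\tilde\gamma$ for the canonical map $\iota\colon\mc{O}_A(GL)\to\mc{O}_A(GL/S^\infty)$, so that $\gamma$ recovers $\rho_H$.

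However, the justification you give for the surjectivity of $\iota$ --- which is exactly what both the uniqueness claim in (a) and the equivalence of surjectivities in (b) rest on --- contains a genuine error: the containment of Hopf ideals is backwards. Since $S^{2m}=\mathrm{id}$ forces $S^{2m'}=\mathrm{id}$ whenever $m\mid m'$ (telescoping: writing $m'=km$, one has $S^{2m'}(a)-a=\sum_{j=1}^{k}\bigl(S^{2m}(b_j)-b_j\bigr)$ with $b_j=S^{2(j-1)m}(a)$), one gets $\mc{L}_{2m'}\subseteq\mc{L}_{2m}$ for $m\mid m'$, not the reverse. The ideals therefore form a \emph{decreasing} system along divisibility, the quotients $\mc{O}_A(GL/S^{2m})$ grow, and this is precisely why the paper takes a projective (inverse) limit rather than a colimit. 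In particular $\mc{O}_A(GL/S^\infty)$ is not ``the quotient of $\mc{O}_A(GL)$ by the directed union $\bigcup_m\mc{L}_{2m}$'' --- that union is just $\mc{L}_2$ --- and the a priori possibility that the inverse limit is strictly larger than the image of $\iota$ (compare $\mathbb{Z}\to\mathbb{Z}_p$) is not excluded by your argument. The paper handles this point in Remark~\ref{rem:OASinfty}: part (1) supplies the canonical map $\iota$ compatible with all projections, and part (2) observes that the system of quotients stabilizes, so that $\mc{O}_A(GL/S^\infty)$ coincides with $\mc{O}_A(GL/S^{2m})$ for $m\gg0$ and is in particular generated by the images of the $a_{ij}$. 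You should route the uniqueness in (a) and the surjectivity comparison in (b) through that remark (or otherwise prove directly that the elements $\iota(a_{ij})$ generate the limit), rather than through the union-of-ideals picture.
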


We have a similar result for Hopf algebras $H$, with antipodes of finite order, that coact on $A$ subject to the condition that the homological codeterminant is central (resp. trivial); in this case, replace ${\mathcal O}_A(GL)$ with ${\mathcal O}^c_A(GL)$ (resp., ${\mathcal O}_A(SL)$), and replace ${\mathcal O}_A(GL/S^\infty)$ with ${\mathcal O}^c_A(GL/S^\infty)$ (resp., ${\mathcal O}_A(SL/S^\infty)$) in the lemma above.


\subsection{Dubois-Violette and Launer's and Mrozinski's quantum groups}
\label{subsec:B(E)}

In this subsection, we consider Hopf algebras, defined by Dubois-Violette and Launer \cite{DVL} and Mrozinski \cite{Mrozinski}, which play the role of function algebras on the quantum (symmetry) group of a non-degenerate bilinear form. 

\begin{definition}[$\mc{B}(\E)$, $\mc{G}(\E,\F)$]  \label{def:B(E)}
Let $n\geq 2$ be an integer, and $\E, \F \in GL_n(\kk)$.
\begin{enumerate}
\item \cite{DVL} Let $\mc{B}(\E)$ be the Hopf algebra with generators $\A := (a_{ij})_{1 \leq i,j \leq n}$ satisfying the relations :
$$\A \E^{-1} \A^{T} \E ~=~ \I ~=~ \E^{-1} \A^{T} \E \A ,$$
with $\Delta(a_{ij}) = \sum_{s=1}^n a_{is} \otimes a_{sj}$, $\ee(a_{ij}) = \delta_{ij}$, for all $1 \leq i, j \leq n$,  and 
$S(\A) = \E^{-1} \A^T \E$.
\medskip

\item 
\cite{Mrozinski} Let $\mc{G}(\E,\F)$ be the Hopf algebra with generators $\A = (a_{ij})_{1 \leq i,j \leq n}$, ${\sf D}$, ${\sf D}^{-1}$ satisfying the relations:
$$\A \E^{-1} \A^{T} \E ~=~ {\sf D} \I ~=~ \F \A^{T} \F^{-1} \A, \quad  \quad {\sf D}{\sf D}^{-1} = {\sf D}^{-1}{\sf D} = 1,$$
with $\Delta(a_{ij}) = \sum_{s=1}^n a_{is} \otimes a_{sj}$, $\Delta({\sf D}^{\pm 1}) = {\sf D}^{\pm 1} \otimes {\sf D}^{\pm 1}$, $\ee(a_{ij}) = \delta_{ij}$, $\ee({\sf D}^{\pm 1}) =1$, for all $1 \leq i, j \leq n$,  and 
$S(\A) =  \E^{-1} \A^T \E ({\sf D}^{-1}\I) = ({\sf D}^{-1}\I) \F\A^T\F^{-1}$, ~$S({\sf D}^{\pm 1}) = {\sf D}^{\mp 1}$.
\end{enumerate}
\end{definition}

For part (b), we take $\F^{-1}, \E^{-1}$ to be the matrices $A,B$ in \cite{Mrozinski}, respectively. Note that we have a surjective Hopf algebra map  $\mc{G}(\E,\E^{-1})\twoheadrightarrow \mc{B}(\E)~~ \textnormal{(}\cong \mc{G}(\E,\E^{-1})/(\sf D-1) \textnormal{)}$.

\begin{remark} \label{rem:B(J1)}
By convention, we extend the definition of $\mc{B}(\E)$ to the case when $\E = \J_1$. Here, $\mc{B}(\J_1) = \kk[\mathbb{Z}/2\mathbb{Z}]$, as Hopf algebras.
\end{remark}

\begin{lemma}\cite{DVL} \cite[Proposition~2.3]{Bichon:B(E)}  \cite[Theorem~1.3]{Mrozinski} \label{lem:B(PTEP)}
Let $\E, \F, \mathbb{P} \in GL_n(\kk)$. Then,  $\mc{B}(\E) \cong \mc{B}(\mathbb{P}^T \E \mathbb{P})$, as Hopf algebras. Moreover, if $\E^T\F^T\E\F = \lambda\I$ for some $\lambda \in \kk^{\times}$, then $$\mc{G}(\F,\E) ~~\cong~~ \mc{G}(\mathbb{P}^T \F \mathbb{P}, \mathbb{P}^{-1}\E (\mathbb{P}^{-1})^T) ~~\cong~~ \mc{G}(\mathbb{P}^T \E^{-1} \mathbb{P}, \mathbb{P}^{-1}\F^{-1} (\mathbb{P}^{-1})^T),$$ as Hopf algebras. \qed
\end{lemma}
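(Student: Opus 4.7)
The plan is to realize each isomorphism by an explicit change of generators that conjugates the matrix of generators $\A$ by $\mathbb{P}$ (and, in the $\mc{G}$ case, rescales the grouplike ${\sf D}$ appropriately).

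For $\mc{B}(\E) \cong \mc{B}(\mathbb{P}^T\E\mathbb{P})$, I would define a candidate algebra map $\phi: \mc{B}(\mathbb{P}^T\E\mathbb{P}) \to \mc{B}(\E)$ on the generating matrix $\A'$ of the domain by $\phi(\A') := \mathbb{P}^{-1}\A\mathbb{P}$. Using $(\mathbb{P}^T\E\mathbb{P})^{-1} = \mathbb{P}^{-1}\E^{-1}(\mathbb{P}^T)^{-1}$, the factors of $\mathbb{P}$ in the defining relation $\A'(\mathbb{P}^T\E\mathbb{P})^{-1}(\A')^T(\mathbb{P}^T\E\mathbb{P}) = \I$ telescope, collapsing it to $\mathbb{P}^{-1}(\A\E^{-1}\A^T\E)\mathbb{P} = \I$, which holds in $\mc{B}(\E)$; the symmetric relation is handled identically. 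A short entry-wise computation shows $\phi$ respects the matrix coproduct $\Delta(a_{ij}) = \sum_k a_{ik}\otimes a_{kj}$ (the inner factors of $\mathbb{P}^{-1}\mathbb{P}$ collapse to $\I$) and the counit, so $\phi$ is a bialgebra map and hence automatically a Hopf map. The two-sided inverse $\A \mapsto \mathbb{P}\A'\mathbb{P}^{-1}$ is constructed analogously, using $\E = (\mathbb{P}^{-1})^T(\mathbb{P}^T\E\mathbb{P})\mathbb{P}^{-1}$.

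For the first $\mc{G}$ isomorphism $\mc{G}(\F,\E) \cong \mc{G}(\mathbb{P}^T\F\mathbb{P}, \mathbb{P}^{-1}\E(\mathbb{P}^{-1})^T)$, I apply the same substitution $\A' \mapsto \mathbb{P}^{-1}\A\mathbb{P}$ together with ${\sf D}' \mapsto {\sf D}$. Both defining relations of the target telescope to $\mathbb{P}^{-1}({\sf D}\I)\mathbb{P} = {\sf D}\I$, and the coalgebra and antipode compatibility are verified just as in the $\mc{B}$ case.

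The second $\mc{G}$ isomorphism is the main obstacle. By composing with the first, it suffices to establish the argument-swap $\mc{G}(\F,\E) \cong \mc{G}(\E^{-1}, \F^{-1})$ and then apply the conjugation substitution. The key observation is that in $\mc{G}(\F,\E)$ the matrix $\A$ is invertible, with $\A^{-1} = \F^{-1}\A^T\F\, {\sf D}^{-1} = {\sf D}^{-1}\E^{-1}\A^T\E$ read off directly from the defining relations. I would thus send the generating matrix $\B$ of $\mc{G}(\E^{-1},\F^{-1})$ to $\A^{-1}$ and its grouplike to ${\sf D}^{-1}$, then verify the relations $\B\E\B^T\E^{-1} = {\sf D}_{new}\I$ and $\F^{-1}\B^T\F\B = {\sf D}_{new}\I$ by direct substitution. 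Coalgebra compatibility follows from $\Delta$ being an algebra map: $\Delta(\A^{-1}) = \Delta(\A)^{-1} = \A^{-1} \otimes \A^{-1}$ in the matrix-tensor sense, and $\Delta({\sf D}^{-1}) = {\sf D}^{-1}\otimes {\sf D}^{-1}$. The trickiest bookkeeping is checking that the two expressions for $\A^{-1}$ are genuinely consistent (which amounts to the identity $\F\A^T\F^{-1} = \E^{-1}\A^T\E$ derived from the defining relations) and that this consistency carries over cleanly under the coproduct; this is exactly the content of Mrozinski's \cite[Theorem~1.3]{Mrozinski}, which one can also invoke directly.
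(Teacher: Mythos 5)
The paper gives no proof of this lemma at all: it is stated with a \qed{} and attributed to \cite{DVL}, \cite[Proposition~2.3]{Bichon:B(E)}, and \cite[Theorem~1.3]{Mrozinski}. So anything you prove explicitly is already more than the paper does. Your verification of $\mc{B}(\E)\cong\mc{B}(\mathbb{P}^T\E\mathbb{P})$ and of the first $\mc{G}$-isomorphism via the substitution $\A'\mapsto\mathbb{P}^{-1}\A\mathbb{P}$, ${\sf D}'\mapsto{\sf D}$ is correct and complete: the relations telescope exactly as you say, the matrix coproduct is preserved because the inner $\mathbb{P}\,\mathbb{P}^{-1}$ collapses, and a bialgebra map between Hopf algebras automatically intertwines antipodes. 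Note that neither of these two isomorphisms uses the hypothesis $\E^T\F^T\E\F=\lambda\I$, which correctly signals that the entire weight of that hypothesis must fall on the third isomorphism.

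There is, however, a genuine error in your treatment of the swap $\mc{G}(\F,\E)\cong\mc{G}(\E^{-1},\F^{-1})$. The claim that $\Delta(\A^{-1})=\Delta(\A)^{-1}=\A^{-1}\otimes\A^{-1}$ ``in the matrix-tensor sense'' is false in the sense you need it. Since $\A^{-1}=S(\A)$ entrywise and $\Delta\circ S=\tau\circ(S\otimes S)\circ\Delta$, one computes $\Delta\bigl((\A^{-1})_{ij}\bigr)=\sum_k (\A^{-1})_{kj}\otimes(\A^{-1})_{ik}$ --- the \emph{reversed} matrix-coproduct pattern, not $\sum_k(\A^{-1})_{ik}\otimes(\A^{-1})_{kj}$. (Equivalently: $\Delta(\A)^{-1}$ in $M_n(H\otimes H)$ is the co-opposite matrix comultiplication of $\A^{-1}$.) Consequently $\B\mapsto\A^{-1}$ is not a coalgebra map. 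The standard repair is to send $\B$ to the transpose $S(\A)^T$, i.e.\ $b_{ij}\mapsto S(a_{ji})$, whose entries do satisfy $\Delta(b_{ij})=\sum_k b_{ik}\otimes b_{kj}$. But then the verification that $S(\A)^T$ (with an appropriate grouplike) satisfies the defining relations of $\mc{G}(\E^{-1},\F^{-1})$ is a nontrivial computation in which ${\sf D}$ does not commute with the $a_{ij}$, and this is exactly where the hypothesis $\E^T\F^T\E\F=\lambda\I$ must be invoked; your sketch never uses it, so the direct argument is not actually carried through. Your fallback --- citing \cite[Theorem~1.3]{Mrozinski} for this step --- is legitimate and is precisely what the paper itself does, but if you want a self-contained proof you must fix the transpose issue and locate the use of the hypothesis.
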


\begin{lemma}\label{L:CentralGL}We have the following statements for the Hopf algebra $\mc{G}(\E,\F)$.
\begin{enumerate}
\item If $\E \F = \lambda \I$ for some $\lambda \in \kk^{\times}$, then the generator $\sf D$ is central in $\mc{G}(\E,\F)$. 
\item If $\F^T \E= \lambda \I$ for some $\lambda \in \kk^{\times}$, then $\mc{G}(\E,\F)$ is involutory. 
\end{enumerate}
In particular, the Hopf algebra $\mc{G}(\E^{-1},\E^T)$ is involutory, and the generator ${\D}$ is central in the Hopf algebra $\mc{G}(\E^{-1},\E)$, for all $\E \in GL_n(\kk)$.
\end{lemma}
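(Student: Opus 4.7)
The plan is to prove (a) via a short associativity argument on the defining relations, and (b) via an entrywise computation of $S^{2}$ that combines both antipode formulas from Definition~\ref{def:B(E)}.

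For (a), the hypothesis $\E\F = \lambda\I$ gives $\F = \lambda\E^{-1}$ and $\F^{-1} = \lambda^{-1}\E$, so the second defining relation $\F\A^{T}\F^{-1}\A = {\sf D}\I$ collapses to $\E^{-1}\A^{T}\E\A = {\sf D}\I$. Combined with the first relation $\A\E^{-1}\A^{T}\E = {\sf D}\I$, associativity of the triple product $\A\cdot(\E^{-1}\A^{T}\E)\cdot\A$ equates the matrices $\A({\sf D}\I)$ and $({\sf D}\I)\A$ entrywise, yielding $a_{ij}{\sf D} = {\sf D}a_{ij}$ for all $i,j$. Since ${\sf D}$ trivially commutes with ${\sf D}^{\pm 1}$, it is central.

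For (b), I will read off $S(a_{ij}) = \sum_{k,l}(\E^{-1})_{ik}\E_{lj}\,a_{lk}{\sf D}^{-1}$ from $S(\A) = \E^{-1}\A^{T}\E({\sf D}^{-1}\I)$, then apply $S$ again using its antihomomorphism property and $S({\sf D}^{-1}) = {\sf D}$, substituting the \emph{other} formula $S(\A) = ({\sf D}^{-1}\I)\F\A^{T}\F^{-1}$ for the inner $S(a_{lk})$. The ${\sf D}$ and ${\sf D}^{-1}$ cancel, and regrouping the resulting quadruple sum (the $k$-sum produces $\E^{-1}(\F^{T})^{-1} = (\F^{T}\E)^{-1}$, the $l$-sum produces $\E^{T}\F$) yields
\begin{equation*}
S^{2}(a_{ij}) \;=\; \sum_{p,m}\bigl((\F^{T}\E)^{-1}\bigr)_{ip}(\E^{T}\F)_{jm}\,a_{pm}.
\end{equation*}
Under $\F^{T}\E = \lambda\I$ one also has $\E^{T}\F = (\F^{T}\E)^{T} = \lambda\I$, so this sum collapses to $a_{ij}$. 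Together with $S^{2}({\sf D}^{\pm 1}) = {\sf D}^{\pm 1}$, this gives $S^{2} = \id$ on all of $\mc{G}(\E,\F)$.

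The ``in particular'' statement is then a direct substitution: for $\mc{G}(\E^{-1},\E^{T})$, the role of $\F^{T}\E$ is played by $(\E^{T})^{T}\E^{-1} = \I$, triggering (b); for $\mc{G}(\E^{-1},\E)$, the role of $\E\F$ is played by $\E^{-1}\E = \I$, triggering (a). The main obstacle I anticipate is the bookkeeping in the $S^{2}$ calculation: the antipode reverses the order of multiplication while one mixes two distinct formulas for it, so one must carefully separate the (scalar, commuting) entries of $\E$ and $\F$ from the (non-commuting) Hopf-algebra elements ${\sf D}^{\pm 1}$ and $a_{ij}$ before identifying the contracted index sums with the matrix products $(\F^{T}\E)^{-1}$ and $\E^{T}\F$ via transposes.
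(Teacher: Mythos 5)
Your proposal is correct and follows essentially the same route as the paper: part (a) is the identical associativity argument on the triple product $\A(\E^{-1}\A^T\E)\A$ after using $\E\F=\lambda\I$ to collapse the second defining relation, and part (b) is the same computation of $S^2(\A)$ mixing the two antipode formulas, which the paper carries out at the matrix level as $S^2(\A)=(\E^{-1}(\F^{-1})^T)\A(\F^T\E)$ — entrywise, this is exactly your quadruple sum. The ``in particular'' substitutions also match.
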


\begin{proof}
(a) It suffices to check that ${\sf D}$ commutes with the generators $\A$ of $\mc{G}(\E,\F)$. This follows as
$$\A (\D \I) ~=~ \A (\F \A^{T} \F^{-1} \A) ~=~ \A (\lambda \I  \E^{-1} \A^{T} \lambda^{-1} \I \E \A) ~=~ (A \E^{-1} \A^{T} \E) \A ~=~ (\D \I) \A.$$

(b) By the antipode axiom, we get that 
\[
\begin{array}{rll}
S^2(\A) &= S(\E^{-1} \A^T \E {\sf D}^{-1}) &= S( \E^{-1}  \A^T {\sf D}^{-1}\E)\\
&= \E^{-1} {\sf D}S(\A)^T  \E
&= \E^{-1}{\sf D} ((\F^{-1})^T {\sf D}^{-1}  \A   \F^T)\E ~~~~\\&=  (\E^{-1}(\F^{-1})^T)\A  (\F^T \E).
\end{array}
\]
Now apply $\F^T \E = \lambda \I$ for $\lambda \in \kk^{\times}$ to get that $S^2= Id$.
\end{proof}

With use of Proposition~\ref{prop:hcodet}, the following lemma shows that there is no abuse of the notation ${\sf D}$ between Definitions~\ref{def:hdet} and~\ref{def:B(E)}.

\begin{lemma} \label{lem:B(E)coact} Suppose $\E,\F\in GL_n(\kk)$. We have the following statements.
\begin{enumerate}
\item The algebra $A(n, \E)$ is a right $\mc{G}(\E^{-1},\F)$-comodule algebra via the coaction $\rho_{\mc{G}}(v_i) = \sum_{s=1}^n v_s \otimes a_{si}$ for all $1\le i\le n$, for any $\F$. We also have that 
$\rho_{\mc{G}} \textstyle \left(\sum_{1\le i,j\le n}e_{ij}v_iv_j\right)=\left(\sum_{1\le i,j\le n}e_{ij}v_iv_j\right)\otimes \sf D$.
\item The algebra $A(n, \E)$ is a right $\mc{B}(\E^{-1})$-comodule algebra via the coaction $\rho_{\mc{B}}(v_i) = \sum_{s=1}^n v_s \otimes a_{si}$ for all $1\le i\le n$. We have that $\rho_{\mc{B}}\left(\sum_{1\le i,j\le n}e_{ij}v_iv_j\right)=\left(\sum_{1\le i,j\le n}e_{ij}v_iv_j\right)\otimes \sf 1$.
\end{enumerate}
\end{lemma}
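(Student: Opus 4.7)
The plan is to verify, in both (a) and (b), that the formula $\rho(v_i) = \sum_s v_s \otimes a_{si}$, extended multiplicatively to an algebra map on the free algebra $A' := \kk\langle v_1, \dots, v_n\rangle$, descends to a well-defined coaction on the quotient $A(n,\E) = A'/(r)$ with $r := \sum_{1 \leq i,j \leq n} e_{ij}\, v_i v_j$. The coassociativity and counit axioms at the level of the free algebra are automatic from $\Delta(a_{ij}) = \sum_s a_{is} \otimes a_{sj}$ and $\ee(a_{ij}) = \delta_{ij}$, and pass to any algebra quotient. Hence, by Lemma~\ref{lem:quadraticfactor}, it suffices to compute $\rho(r)$ in $A' \otimes H$ and confirm that $\rho(r)$ lies in $(r) \otimes H$.

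For part (a), I would directly expand
\[
\rho_{\mc{G}}(r) ~=~ \sum_{i,j} e_{ij}\, \rho_{\mc{G}}(v_i)\, \rho_{\mc{G}}(v_j) ~=~ \sum_{k,l} v_k v_l \otimes \sum_{i,j} e_{ij}\, a_{ki}\, a_{lj} ~=~ \sum_{k,l} v_k v_l \otimes (\A \E \A^T)_{kl},
\]
which reduces the problem to evaluating the matrix $\A \E \A^T$ over $\mc{G}(\E^{-1},\F)$. Substituting $\E \mapsto \E^{-1}$ into Definition~\ref{def:B(E)}(b), the first half of the defining relation of $\mc{G}(\E^{-1},\F)$ reads $\A \E \A^T \E^{-1} = \D\, \I$, equivalently $\A \E \A^T = \D\, \E$ as matrices over $\mc{G}(\E^{-1},\F)$; therefore
\[
\rho_{\mc{G}}(r) ~=~ \sum_{k,l} v_k v_l \otimes \D\, e_{kl} ~=~ \Big(\sum_{k,l} e_{kl}\, v_k v_l\Big) \otimes \D ~=~ r \otimes \D,
\]
which lies in $(r) \otimes \mc{G}(\E^{-1},\F)$ and therefore projects to $0$ in $A(n,\E) \otimes \mc{G}(\E^{-1},\F)$.

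Part (b) then follows from an identical computation: substituting $\E \mapsto \E^{-1}$ in Definition~\ref{def:B(E)}(a) gives the relation $\A \E \A^T = \E$ in $\mc{B}(\E^{-1})$, whence $\rho_{\mc{B}}(r) = r \otimes 1$. I expect no genuine obstacle here --- the entire argument is a one-shot unwinding of the defining relations, with care required only for matrix-index bookkeeping. It is perhaps worth noting that the auxiliary matrix $\F$ in $\mc{G}(\E^{-1},\F)$ enters only in the second half of the defining relation of $\mc{G}$ and plays no role in the computation of $\rho_{\mc{G}}(r)$; this is the structural reason why the assertion in (a) holds for every $\F \in GL_n(\kk)$.
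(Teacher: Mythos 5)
Your proposal is correct and follows essentially the same route as the paper: both reduce the claim via Lemma~\ref{lem:quadraticfactor} to the single matrix computation $\rho(r)=\sum_{k,l}v_kv_l\otimes(\A\E\A^T)_{kl}$ and then invoke the defining relation $\A\E\A^T=\E\,{\sf D}$ of $\mc{G}(\E^{-1},\F)$ (with ${\sf D}=1$ for $\mc{B}(\E^{-1})$). Your closing remark that $\F$ enters only the second defining relation, and hence is irrelevant here, is exactly why the paper states part (a) for arbitrary $\F$.
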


\begin{proof}
(a) It suffices to show that the relation $\sum_{1\le i,j\le n}e_{ij}v_iv_j$ is a one-dimensional $\mc{G}(\E^{-1},\F)$-comodule under the coaction $\rho_{\mc{G}}$. Direct computation shows that
\[
\begin{array}{lll}
\rho_{\mc{G}} \textstyle \left(\sum_{1\le i,j\le n}e_{ij}v_iv_j\right)
&=\textstyle \sum_{1\le i,j,k,l\le n}e_{ij}(v_kv_l)\otimes (a_{ki}a_{lj})
&=\textstyle \sum_{1\le k,l\le n}(v_kv_l)\otimes (\sum_{1\le i,j\le n}a_{ki}e_{ij}a_{jl}^T)\\
&=\textstyle \sum_{1\le k,l\le n}(v_kv_l)\otimes (\A\E\A^T)_{kl}
&=\textstyle \sum_{1\le k,l\le n } v_kv_l\otimes (\E{\sf D})_{kl}\\
&=\textstyle \left(\sum_{1\le k,l\le n } e_{kl}v_kv_l\right)\otimes \sf D.
\end{array}
\]

(b) For the statement about $\mc{B}(\E^{-1})$, take ${\sf D} =1$ (and $\F = \E$) in the argument above.
\end{proof}


\subsection{Relationship between quantum groups}

Now we establish a connection between the Manin's, Dubois-Violette and Launer's, and Mrozinski's quantum groups as follows. 

\begin{lemma}\label{L:quantumGrp} 
Let $H$ be a Hopf algebra that coacts on $A(\E)$ with homological codeterminant ${\sf D}^{-1}$.  Consider the matrix $\B:=(b_{ij})$ such that $\rho(v_i)=\sum_{1\le s\le n} v_s\otimes b_{si}$ for $b_{si} \in H$. Then, 
\begin{enumerate}
\item $\B\E\B^T\E^{-1}={\sf D}\I=\E\B^T\E^{-1}{\sf D}^{-1} \B {\sf D}$;
\item $\B\E\B^T\E^{-1}={\sf D}\I= \E\B^T\E^{-1} \B$ \quad if ${\sf D}$ is central; 
\item $S^2(\B)={\sf D}\M \B\M^{-1}{\sf D}^{-1}$, \quad where $\M=(\mu_{A(\E)}|_{A_1})^T=-\E(\E^{-1})^T$; and
\item $\B\E\B^T\E^{-1}={\sf D}\I = \E^T\B^T(\E^{-1})^T \B$ \quad if $H$ is involutory. 
\end{enumerate}
\end{lemma}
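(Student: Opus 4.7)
The plan is: derive (a) from Proposition~\ref{prop:hcodet} together with the antipode axioms; obtain (b) from (a) by centrality; compute $S^2(\B)$ to prove (c); and combine (a) with (c) plus $S^2=\id$ to prove (d).

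For (a), Proposition~\ref{prop:hcodet} at the free-algebra level gives $\rho(r)=r\otimes{\sf D}$ where $r=\sum e_{ij}v_iv_j$, since the hypothesis says the homological codeterminant is ${\sf D}^{-1}$. Expanding $\rho(v_i)\rho(v_j)$ and equating coefficients of the linearly independent monomials $v_sv_t$ in the free algebra produces the matrix identity $\B\E\B^T=\E({\sf D}\I)$; right-multiplying by $\E^{-1}$ gives the first equation. For the second equation, the induced coalgebra structure $\Delta(b_{ij})=\sum_kb_{ik}\otimes b_{kj}$, $\epsilon(b_{ij})=\delta_{ij}$ together with the antipode axioms yield $S(\B)\B=\B\,S(\B)=\I$ in $M_n(H)$, so $S(\B)=\B^{-1}$. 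Left-multiplying $\B\E\B^T=\E({\sf D}\I)$ by $S(\B)$ and right-multiplying by $({\sf D}^{-1}\I)\E^{-1}$ produces $S(\B)=\E\B^T\E^{-1}({\sf D}^{-1}\I)$; substituting this into $S(\B)\B=\I$ and right-multiplying by ${\sf D}\I$ gives the second equation. Part (b) follows at once, since central ${\sf D}$ means $({\sf D}^{-1}\I)\B({\sf D}\I)=\B$.

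For (c), I would apply $S$ entrywise to the formula $S(b_{ij})=\sum_{k,l}e_{ik}b_{lk}(\E^{-1})_{lj}{\sf D}^{-1}$ derived in (a), using $S({\sf D}^{-1})={\sf D}$ and that $S$ reverses products, then substitute the same formula in for the resulting $S(b_{lk})$. The scalar coefficients collect as $\sum_ke_{ik}(\E^{-1})_{pk}=(\E(\E^{-1})^T)_{ip}=-\M_{ip}$ and $\sum_l(\E^{-1})_{lj}e_{lm}=((\E^{-1})^T\E)_{jm}=-(\M^{-1})_{mj}$, where the second equality uses the identity $(\E^{-1})^T\E=-(\M^{-1})^T$ obtained by a direct transpose of $\M=-\E(\E^{-1})^T$. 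The two sign factors cancel and the formula rearranges to $S^2(\B)=({\sf D}\I)\,\M\B\M^{-1}\,({\sf D}^{-1}\I)$. The identification $\M=(\mu_{A(\E)}|_{A_1})^T$ is immediate from Lemma~\ref{lem:NakAutom}(b).

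For (d), the hypothesis $S^2=\id$ in (c) forces the matrix identity $({\sf D}^{-1}\I)\B({\sf D}\I)=\M\B\M^{-1}$; substituting this into the second equation of (a) yields $\E\B^T\E^{-1}\M\B\M^{-1}={\sf D}\I$. The relations $\E^{-1}\M=-(\E^{-1})^T$ and $\M^{-1}=-\E^T\E^{-1}$ (both immediate from $\M=-\E(\E^{-1})^T$) absorb the two signs, and a standard sequence of rearrangements (multiplying on the right by $\E$, on the left by $\E^{-1}$, on the right by $(\E^{-1})^T$, and on the left by $\E^T$, all legitimate because ${\sf D}\I$ commutes with scalar matrices) produces $\E^T\B^T(\E^{-1})^T\B={\sf D}\I$. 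The main technical difficulty throughout is that the entries of $\B$ do not in general commute with ${\sf D}$, so that the scalar matrix ${\sf D}\I$ can never be slid past products involving $\B$; this forces the conjugation terms $({\sf D}^{-1}\I)\B({\sf D}\I)$ to be tracked explicitly in (a) and (c). The argument succeeds only because ${\sf D}\I$ does commute with every scalar matrix over $\kk$, so that factors of $\E$, $\E^{-1}$, $\E^T$, $(\E^{-1})^T$, and $\M$ move past ${\sf D}\I$ without penalty.
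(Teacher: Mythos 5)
Your proposal is correct and follows essentially the same route as the paper: part (a) via expanding $\rho(r)=r\otimes{\sf D}$ and the antipode axioms, (b) immediately from (a), and (d) by substituting the $S^2=\id$ consequence of (c) into the second identity of (a) and clearing the signs from $\M=-\E(\E^{-1})^T$. The only difference is in (c), where the paper cites \cite[Theorem~0.1]{CWZ:Nakayama} together with Lemma~\ref{lem:NakAutom}(b) while you carry out the direct computation of $S^2(\B)$ by iterating the formula for $S(\B)$ — an alternative the paper itself explicitly notes is available, and your index bookkeeping checks out.
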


\begin{proof}
(a) We have that the relation $\sum_{1\le i,j\le n}e_{ij}v_iv_j$ of $A(\E)$ generates a one-dimensional right $H$-comodule.  By Proposition~\ref{prop:hcodet}, we have that
$$\rho \textstyle \left(\sum_{1\le i,j\le n}e_{ij}v_iv_j\right) = 
\textstyle \left(\sum_{1\le i,j\le n}e_{ij}v_iv_j\right) \otimes {\sf D} =\textstyle \left(\sum_{1\le i,j\le n}v_iv_j\right) \otimes (\E{\sf D})_{ij}.$$
We also get that 
\[
\begin{array}{ll}
\rho \textstyle \left(\sum_{1\le i,j\le n}e_{ij}v_iv_j\right)
&=\textstyle \sum_{1\le i,j,k,l\le n}e_{ij}(v_kv_l)\otimes (b_{ki}b_{lj})\\
&=\textstyle \sum_{1\le k,l\le n}(v_kv_l)\otimes (\sum_{1\le i,j\le n}b_{ki}e_{ij}b_{jl}^T)\\
&=\textstyle \sum_{1\le k,l\le n}(v_kv_l)\otimes (\B\E\B^T)_{kl}.
\end{array}
\]
Hence, we have that $\B\E\B^T=\E {\sf D}$, which implies that $\B\E\B^T\E^{-1}={\sf D}\I$. So, half of part (a) holds.

Since $\B S(\B)=\I$, for $\Delta(b_{ij})=\sum_{s=1}^{n} b_{is}\otimes b_{sj}$, we get 
$$S(\B)=\E\B^T\E^{-1}{\sf D}^{-1}\I. $$ 
Moreover $S(\B)\B = \I$, so we have that $\E\B^T\E^{-1}{\sf D}^{-1}\B{\sf D}={\sf D}\I$. Thus, part (a) holds.

(b) This follows immediately from (a).

(c) This follows from \cite[Theorem~0.1]{CWZ:Nakayama} and Lemma~\ref{lem:NakAutom}(b) (or by direct computation).

(d) We get from part (a) and $S^2(\B) = \B$ that
$${\sf D}\I = \E\B^T\E^{-1}[{\sf D}^{-1} \B {\sf D}] = \E\B^T\E^{-1}[\E(\E^{-1})^T\B\E^T\E^{-1}] = \E\B^T(\E^{-1})^T\B\E^T\E^{-1}, $$
which yields $\E^T\B^T(\E^{-1})^T \B = {\sf D}\I$. The other relation follows from part (a).
\end{proof}

Now to get the connection between quantum groups discussed in the previous subsections, consider the following definition and corollary to the lemma above.

\begin{definition}[$\mathcal{M}(\E)$]
Let $n\geq 2$ be an integer, and $\E \in GL_n(\kk)$. Consider the bialgebra $\mathcal{M}(\E)$ with generators $\A:=(a_{ij})_{1\le i,j\le n}$ and ${\sf D}$ satisfying the relation
$\A \E^{-1} \A^{T} \E = {\sf D} \I$, with $\Delta(a_{ij}) = \sum_{s=1}^n a_{is} \otimes a_{sj}$, $\Delta({\sf D}) = {\sf D} \otimes {\sf D}$, $\ee(a_{ij}) = \delta_{ij}$, and $\ee({\sf D}) =1$.
\end{definition}

\begin{corollary}\label{C:SLGL} We have the following isomorphisms of various quantum linear \textnormal{(}semi\textnormal{)}groups.
\begin{enumerate}
\item $\mc{O}_{A(\E)}(M) \cong \mc{M}(\E^{-1})$, as bialgebras; and
\item $\mc{O}_{A(\E)}(SL)\cong \mc{B}(\E^{-1})$,
\item $\mc{O}^{c}_{A(\E)}(GL)\cong \mc{G}(\E^{-1},\E)$,
\item $\mc{O}_{A(\E)}(GL/S^2)\cong \mc{G}(\E^{-1},\E^T)$, as Hopf algebras.
\end{enumerate}
\end{corollary}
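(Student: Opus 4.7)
The plan is to establish each of (a)--(d) by exhibiting mutually inverse maps between the two sides on the distinguished generating matrix $\A = (a_{ij})$ (and on the group-like ${\sf D}$, where present). In every case one direction is produced by combining a coaction on $A(\E)$ with the appropriate universal property of Manin's construction (Lemma~\ref{lem:OA(M)}, Lemma~\ref{lem:OA(GL)}, or Lemma~\ref{lem:OcA(GL)}). The opposite direction is read off from Lemma~\ref{L:quantumGrp}, which shows that the coefficient matrix $\B$ of Manin's canonical coaction on $A(\E)$ already satisfies precisely the defining relations of the Dubois--Violette/Launer or Mrozinski quantum group in question.

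For part (a), a direct computation in the style of Lemma~\ref{lem:B(E)coact} shows that $\mc{M}(\E^{-1})$ coacts on $A(\E)$ via $v_i \mapsto \sum_s v_s \otimes a_{si}$: the relation $\A \E \A^T = {\sf D}\E$ in $\mc{M}(\E^{-1})$ forces $\rho(r) = r \otimes {\sf D}$, so the ideal generated by $r$ is preserved. Lemma~\ref{lem:OA(M)} then supplies a bialgebra map $\phi: \mc{O}_{A(\E)}(M) \to \mc{M}(\E^{-1})$, $a_{ij} \mapsto a_{ij}$. In the reverse direction, let ${\sf D} \in \mc{O}_{A(\E)}(M)$ denote the group-like element determined by $\rho_{\mc{O}}(r) = r \otimes {\sf D}$ (it is group-like by comparing $(1 \otimes \Delta)\rho(r)$ with $(\rho \otimes 1)\rho(r)$). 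The first half of the proof of Lemma~\ref{L:quantumGrp}(a), which uses only the bialgebra axioms and not the antipode, gives $\B \E \B^T \E^{-1} = {\sf D}\I$ in $\mc{O}_{A(\E)}(M)$ -- precisely the defining relation of $\mc{M}(\E^{-1})$. Hence $a_{ij} \mapsto a_{ij}$, ${\sf D} \mapsto {\sf D}$ extends to a bialgebra map $\psi: \mc{M}(\E^{-1}) \to \mc{O}_{A(\E)}(M)$, and $\phi, \psi$ are inverse on generators.

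Parts (b), (c), (d) proceed in parallel. By Lemma~\ref{lem:B(E)coact} the relevant DVL/Mrozinski quantum group coacts on $A(\E)$: for $\mc{B}(\E^{-1})$ the codeterminant is trivial; for $\mc{G}(\E^{-1},\E)$ the element ${\sf D}$ is central by Lemma~\ref{L:CentralGL}(a) (as $\E^{-1}\E = \I$); and $\mc{G}(\E^{-1},\E^T)$ is involutory by Lemma~\ref{L:CentralGL}(b) (as $(\E^T)^T\E^{-1} = \I$). These observations force the Hopf algebra map supplied by Lemma~\ref{lem:OA(GL)} to factor through $\mc{O}_{A(\E)}(SL)$, $\mc{O}^c_{A(\E)}(GL)$, and $\mc{O}_{A(\E)}(GL/S^2)$ respectively. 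For the reverse maps, apply Lemma~\ref{L:quantumGrp} inside each Manin quotient: part~(a) specialised to ${\sf D}=1$ yields $\B\E\B^T\E^{-1} = \I = \E\B^T\E^{-1}\B$, matching the relations of $\mc{B}(\E^{-1})$; part~(b), valid in $\mc{O}^c_{A(\E)}(GL)$ since ${\sf D}$ is central there, gives $\B\E\B^T\E^{-1} = {\sf D}\I = \E\B^T\E^{-1}\B$, matching $\mc{G}(\E^{-1},\E)$; and part~(d), valid in the involutory $\mc{O}_{A(\E)}(GL/S^2)$, gives $\B\E\B^T\E^{-1} = {\sf D}\I = \E^T\B^T(\E^{-1})^T\B$, matching $\mc{G}(\E^{-1},\E^T)$. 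Each pair of maps is visibly inverse on $a_{ij}$ and ${\sf D}$, so they are mutually inverse Hopf algebra isomorphisms.

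The principal obstacle is purely bookkeeping: throughout Section~\ref{subsec:B(E)} the symbol ${\sf D}$ appearing in the presentations of $\mc{B}(\cdot)$ and $\mc{G}(\cdot,\cdot)$ is the inverse of the homological codeterminant of the corresponding coaction on $A(\E)$ (as one sees by comparing Definition~\ref{def:hdet}, Proposition~\ref{prop:hcodet}, and Lemma~\ref{lem:B(E)coact}), and this matches the ${\sf D}$-convention of Lemma~\ref{L:quantumGrp}. Once this identification is fixed and the factorisations through the quotients $\mc{O}_{A(\E)}(SL)$, $\mc{O}^c_{A(\E)}(GL)$, $\mc{O}_{A(\E)}(GL/S^2)$ are verified, no further computation beyond what is already contained in Lemma~\ref{L:quantumGrp} is required.
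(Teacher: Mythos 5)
Your proposal is correct and follows essentially the same route as the paper: one direction comes from the universal property of Manin's construction applied to the coaction of Lemma~\ref{lem:B(E)coact} (using Lemma~\ref{L:CentralGL} to see that the coaction lands in the right quotient), the other direction comes from Lemma~\ref{L:quantumGrp} showing the canonical coefficient matrix satisfies the Dubois-Violette--Launer/Mrozinski relations, and the two maps are mutually inverse by uniqueness. The paper writes out only part (c) and asserts the rest follow in the same fashion, whereas you spell out all four cases (including the useful observation that the relation $\B\E\B^T\E^{-1}={\sf D}\I$ needs only the bialgebra structure for part (a)), but the argument is the same.
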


\begin{proof}
We will show that $\mc{O}^{c}_{A(\E)}(GL)\cong  \mc{G}(\E^{-1},\E)$ as Hopf algebras. Parts (a),(b) and (d) follow in the same fashion. By Lemmas~\ref{lem:B(E)coact} and~\ref{L:CentralGL}, there exists a coaction of $ \mathcal{G}^c:=\mc{G}(\E^{-1},\E)$ on $A(\E)$
 with central homological codeterminant ${\sf D}^{-1}$. By Lemma~\ref{lem:OcA(GL)}, we then get a unique Hopf algebra map $\gamma: \mc{O}^{c}:=\mc{O}^{c}_A(GL) \rightarrow  \mc{G}^c$ such that $\rho_{\mc{G}^c} = (Id \otimes \gamma) \circ \rho_{\mc{O}^{c}}.$ On the other  hand, by Definition~\ref{def:OcA(GL)} and Proposition~\ref{prop:hcodet}, there exists a coaction of $\mathcal{O}^{c}_A(GL)$ on $A(\E)$ with central homological codeterminant 
 ${\sf D}_\mc{O}^{-1}$. Now by Lemma~\ref{L:quantumGrp}(b), there exists a unique Hopf algebra map 
 $\eta:  \mc{G}^c \rightarrow \mc{O}^c$ such that $\eta({\sf D}^{-1})={\sf D}_\mc{O}^{-1}$ and $\rho_{\mc{O}^{c}} = (Id \otimes \eta) \circ \rho_{\mc{G}^c}$. Therefore, by uniqueness, $\gamma$ and $\eta$ are mutually inverse, and $\mc{G}^c$ and $\mc{O}^{c}$ are isomorphic as Hopf algebras.
\end{proof}

It is clear that there exists a surjection of Hopf algebras $\mc{O}^{c}_{A(\E)}(GL) \twoheadrightarrow \mc{O}_{A(\E)}(SL)$. Now we provide some examples of the universal quantum linear groups above. Some of these computations overlap with those presented in \cite[Section~5]{CWZ:Nakayama}.

\begin{example} \label{ex:B(Aq)}
We compute the  quantum linear groups  associated to the quantum plane $A_q:=A(2, \DD_2(q))$. We will see that $\mc{O}^{c}_{A_q}(GL)\cong \mc{O}_{-q}(GL_2)$, $\mc{O}_{A_q}(SL)\cong \mc{O}_{-q}(SL_2)$, and Takeuchi's two-parameter quantum group $\mc{O}_{A_q}(GL/S^2)\cong \mc{O}_{-q,-q^{-1}}(GL_2)$. The Hopf algebras on the right hand side are presented in \cite[Section~I.1]{book:BrownGoodearl} and \cite{Takeuchi:two-parameter}.
\begin{enumerate}
\item We have generators $a,b,c,d$ and group-like element ${\sf D}$ for $\mc{O}_{A_q}(M)$, and the following relations: 
$$ab=-qba, \quad cd=-qdc, \quad  ad+qbc= da + q^{-1}cb = {\sf D},$$
with coalgebra structure given by
$\Delta(a) = a\otimes a + b \otimes c$, 
$\Delta(b) = a\otimes b + b \otimes d$, 
$\Delta(c) = c\otimes a + d \otimes c$,
$\Delta(d) = c\otimes b + d \otimes d$,
$\ee(a)= \ee(d) =1$, and  $\ee(b)= \ee(c) =0$.
\item We pick up an extra generator ${\sf D}^{-1}$ for $\mc{O}^{c}_{A_q}(GL)$. The relations of  $\mc{O}^{c}_{A_q}(GL)$ include the relations of $\mc{O}_{A_q}(M)$ along with 
$$bd=-qdb, \quad ac=-qca, \quad  ad+qcb= da + q^{-1}bc = {\sf D}, \quad {\sf D}{\sf D}^{-1} = {\sf D}^{-1}{\sf D} = 1;$$
$\mc{O}^{c}_{A_q}(GL)$ has the same coalgebra structure as $\mc{O}_{A_q}(M)$. The antipode of $\mc{O}^{c}_{A_q}(GL)$ is given by
$$S(a) =  d{\sf D}^{-1}, \quad S(b) = q^{-1} b{\sf D}^{-1}, \quad S(c) =  qc{\sf D}^{-1}, \quad S(d) =  a{\sf D}^{-1}, \quad S({\sf D}^{\pm 1}) = {\sf D}^{\mp 1}.$$

\item In $\mc{O}^{c}_{A_q}(GL/S^{2m})$, we get the additional relations 
$$a{\sf D}^m = {\sf D}^ma, \quad q^{-2m}b{\sf D}^m = {\sf D}^mb, \quad q^{2m}c{\sf D}^m = {\sf D}^m c, \quad d{\sf D}^m = {\sf D}^md.$$
Suppose that $q^{2m}\neq 1$. Since ${\sf D}$ is central in $\mc{O}^{c}_{A_q}(GL)$, we have $b=c=0$. So, in this case, $\mc{O}^{c}_{A_q}(GL/S^{2m})$ is generated by group-like elements $a, d, {\sf D}^{\pm 1}$ where $ad=da=\mathsf D$. So, $\mc{O}^{c}_{A_q}(GL/S^{2m})\cong \mathbb \kk[\mathbb Z\times \mathbb Z]$. Else if $q^{2m}=1$, we have that $\mc{O}^{c}_{A_q}(GL/S^{2m})=\mc{O}^{c}_{A_q}(GL)$.

\item Hence, if $q$  is not a root of unity, then $\mc{O}^{c}_{A_q}(GL/S^{\infty})$ = $\mc{O}^{c}_{A_q}(GL/S^2)\cong \kk[\mathbb Z\times \mathbb Z]$ as Hopf algebras, by part (c). Otherwise, if $q$  is a root of unity, then $\mc{O}^{c}_{A_q}(GL/S^{\infty})$ = $\mc{O}^{c}_{A_q}(GL)$. 
\item Take ${\sf D} =1$ in (b), (c), (d) to get  $\mc{O}_{A_q}(SL)$,  $\mc{O}_{A_q}(SL/S^{2m})$, and $\mc{O}_{A_q}(SL/S^{\infty})$, respectively.

\item For arbitrary homological codeterminant, we have that  $\mc{O}_{A_q}(GL/S^2)$ is generated $a,b,c,d$ and group-like elements ${\sf D}^{\pm 1}$ with the following relations: ${\sf D} {\sf D}^{-1} = {\sf D^{-1}} {\sf D}=1$ and
\[
\begin{array}{llll}
&\quad \quad \quad ab=-qba, &\quad cd=-qdc, &\quad  ad+qbc= da + q^{-1}cb = {\sf D},\\
&\quad \quad \quad bd=-q^{-1}db, &\quad ac=-q^{-1}ca, &\quad  ad+q^{-1}cb= da + qbc = {\sf D}.
\end{array}
\]
The coalgebra structure and antipode is the same as given in parts (a) and (b).
\end{enumerate}
\end{example}

\begin{example}\label{ex:B(AJ)}
We compute the  quantum linear groups  associated to the quantum plane $A_J:=A(2, \J_2)$. 

\begin{enumerate}
\item We have generators $a,b,c,d$ and group-like element ${\sf D}$ for $\mc{O}_{A_J}(M)$, and the following relations: 
$$\quad \quad \quad [a,b]=b^2, \quad [c,d]=cb-da-db+d^2, \quad -ab+ad+ba+b^2-bc-bd~ = ~-cb+da+db ~=~{\sf D},$$
with coalgebra structure given by $\Delta(a) = a\otimes a + b \otimes c$, $\Delta(b) = a\otimes b + b \otimes d$,
$\Delta(c) = c\otimes a + d \otimes c$, 
$\Delta(d) = c\otimes b + d \otimes d$,
$\ee(a)= \ee(d) =1$,  and $\ee(b)= \ee(c) =0.$
\item We pick up an extra generator ${\sf D}^{-1}$ for $\mc{O}^{c}_{A_J}(GL)$. The relations of  $\mc{O}^{c}_{A_J}(GL)$ include the relations of $\mc{O}_{A_J}(M)$ along with ${\sf D}{\sf D}^{-1} = {\sf D}^{-1}{\sf D} = 1$, and
$$\quad \quad \quad [d,b]=b^2, \quad [c,a]=a^2+ba+bc-da, \quad -ba-bc+da ~=~ ab+ad+b^2+bd-cb-db ~=~  {\sf D};$$ 
$\mc{O}^{c}_{A_J}(GL)$ has the same coalgebra structure as $\mc{O}_{A_J}(M)$. The antipode of $\mc{O}^{c}_{A_J}(GL)$ is given by
$$ \quad \quad \quad
 S(a) = (d-b){\sf D}^{-1}, ~~ S(b) = -b{\sf D}^{-1},
~~S(c) = (a+b-c-d){\sf D}^{-1}, ~~ S(d) = (a+b){\sf D}^{-1}, ~~S({\sf D}^{\pm 1}) = {\sf D}^{\mp 1}.
$$
\item In $\mc{O}^{c}_{A_J}(GL/S^{2m})$, we get the additional relations 
\begin{gather*}
(a+2mb){\sf D}^m = {\sf D}^ma, \quad b{\sf D}^m = {\sf D}^mb,\\
 \quad (-2ma-4m^2b+c+2md){\sf D}^m = {\sf D}^m c, \quad (d-2mb){\sf D}^m = {\sf D}^md.
 \end{gather*}
Since ${\sf D}$ is central in $\mc{O}^{c}_{A_J}(GL)$, we have the additional relations: $a=d,~b=0$.
So, for all $m \geq 1$:
$\mc{O}^{c}_{A_J}(GL/S^{2m})$  is generated by $a, c$ and group-like elements ${\sf D}^{\pm 1}$,
 with  $[c,a]=0$, ${\sf D}=a^2$, and $\Delta(a) = a \otimes a, ~~\Delta(c) = a \otimes c+ c \otimes a$.
\item So, $\mc{O}^{c}_{A_J}(GL/S^{\infty})$ = $\mc{O}^{c}_{A_J}(GL/S^2)$ as Hopf algebras, by part (c).
\item  Take ${\sf D} =1$ in (b), (c), (d) to get  $\mc{O}_{A_J}(SL)$,  $\mc{O}_{A_J}(SL/S^{2m})$, and $\mc{O}_{A_J}(SL/S^{\infty})$, respectively.
\item For arbitrary homological codeterminant, we have that  $\mc{O}_{A_J}(GL/S^2)$ is generated $a,b,c,d$ and group-like elements ${\sf D}^{\pm 1}$ with the following relations: ${\sf D} {\sf D}^{-1} = {\sf D^{-1}} {\sf D} =1$ and
\[
\begin{array}{llll}
&\quad \quad \quad [a,b]=b^2, & [c,d]=cb-da-db+d^2, &-ab+ad+ba+b^2-bc-bd~ = ~-cb+da+db ~=~{\sf D},\\
&\quad \quad \quad [d,b]=-b^2, &[c,a]=-a^2+ba-bc+da, &ba-bc+da ~=~ -ab+ad+b^2-bd-cb+db ~=~  {\sf D}.
\end{array}
\]
The coalgebra structure and antipode is the same as given in parts (a) and (b).
\end{enumerate}
\end{example}

Now we present an example of an infinite dimensional, noncommutative, noncocommutative, cosemisimple Hopf algebra that coacts on the polynomial ring $\kk[u,v]$ inner-faithfully.  This shows that \cite[Theorem~1.3]{PavelWalton} cannot be extended to the setting of infinite dimensional, cosemisimple Hopf coactions on commutative domains, in general. This example grew out of conversations between the first author, Pavel Etingof, and Debashish Goswami; we thank Etingof and Goswami for allowing us to include this example here.

\begin{example}\label{Ex:NoncomAction}
Take $q\in \kk^\times$ a non-root of unity, and take $\E=\DD_2(-1)$ and $\F=\DD_2(-q)$. By Definition~\ref{def:B(E)}(b), $\mc{G}(\E^{-1},\F^{-1})$ is generated by $a,b,c,d$ and ${\sf D},{\sf D}^{-1}$ subject to relations: ${\sf D} {\sf D}^{-1} = {\sf D^{-1}} {\sf D}=1$, 
\[
\begin{array}{llllll}
 ba=ab, &\quad cd=dc, &\quad db=q^{-1}bd, &\quad ac=qca,&\quad  ad-bc= da-cb =da-q^{-1}bc= ad-qcb= {\sf D}.
\end{array}
\]
It is easy to see that this algebra is a localization of an iterated Ore extension at the denominator set formed by the normal element ${\sf D}$. Thus, this algebra is infinite dimensional and noncommutative.

The coalgebra structure of $\mc{G}(\E^{-1},\F^{-1})$ given by $\Delta(a) = a\otimes a + b \otimes c$, $\Delta(b) = a\otimes b + b \otimes d$,
$\Delta(c) = c\otimes a + d \otimes c$, 
$\Delta(d) = c\otimes b + d \otimes d$, $\Delta({\sf D}^{\pm 1})={\sf D}^{\pm 1}\otimes {\sf D}^{\pm 1}$, and
$\ee(a)= \ee(d)=\ee({\sf D})=1$,  and $\ee(b)= \ee(c) =0$. So,  $\mc{G}(\E^{-1},\F^{-1})$ is noncocommutative. The antipode of $\mc{G}(\E^{-1},\F^{-1})$ is given by
$S(a)=d{\sf D}^{-1},\ S(b)=-b{\sf D}^{-1},\ S(c)=-c{\sf D}^{-1},\ S(d)=a{\sf D}^{-1},\ S({\sf D}^{\pm 1})={\sf D}^{\mp 1}.$

According to Lemma \ref{lem:B(E)coact}(a), we know that $\mc{G}(\E^{-1},\F^{-1})$ coacts on the polynomial algebra $A(\E)\cong \kk[u,v]$ via $\rho(u)=u\otimes a+v\otimes c, ~\rho(v)=u\otimes b+v\otimes d$. This coaction is inner-faithful due to the universal property of $\mc{G}$. Moreover, since $(\E^{-1})^T(\F^{-1})^T\E^{-1}\F^{-1}=q^{-1}\I$, we apply \cite[Theorem 1.1]{Mrozinski} to conclude that  $\mc{G}(\E^{-1},\F^{-1})$ is cosemisimple. Thus, we have the desired Hopf coaction.
\end{example} 

In the example above, note that the noncommutative Hopf algebra $\mc{G}(\E^{-1},\F^{-1})$, with $\E=\DD_2(-1)$ and $\F=\DD_2(-q)$, is not involutory. On the other hand, the universal involutory quantum linear group associated to the polynomial algebra $A(2,\E)$ is always commutative by Example~\ref{ex:B(Aq)}(f). This prompts the following question due to Julien Bichon.

\begin{question}[J. Bichon]
Can an infinite dimensional, noncommutative, cosemisimple, involutory Hopf algebra  coact on a commutative domain inner-faithfully? 
\end{question}


\section{Homological properties of $\mc{O}_{A(\E)}(SL)$} \label{sec:hom}

In this section, we verify several homological properties of the quantum groups $\mc{O}_{A(\E)}(SL) \cong \mc{B}(\E^{-1})$ (see Corollary~\ref{C:SLGL}). Namely, we prove Theorem~\ref{thm:homintro}. The results below follow essentially from work of Bichon~\cite{Bichon}.

First, we need the preliminary result, which follows from a routine computation. Note that for a Hopf algebra $(H,m,u,\Delta,\ee,S)$ with bijective antipode $S$,  the {\it opposite Hopf algebra} $H^{op}$ is given by $(H,m^{op},u,\Delta,\ee,S^{-1})$.

\begin{lemma} \label{lem:B(E)op}
We have that $\mc{B}(\E)^{op}\cong \mc{B}(\E^T)$ as Hopf algebras. \qed
\end{lemma}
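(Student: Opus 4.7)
The plan is to construct explicit, mutually inverse Hopf algebra morphisms between $\mc{B}(\E)^{op}$ and $\mc{B}(\E^T)$ on generators, and check they are well-defined. Let $\A = (a_{ij})$ denote the matrix of generators of $\mc{B}(\E)$ (and hence of $\mc{B}(\E)^{op}$) and $\B = (b_{ij})$ the generators of $\mc{B}(\E^T)$. I would define $\phi \colon \mc{B}(\E^T) \to \mc{B}(\E)^{op}$ on generators by $\phi(b_{ij}) = a_{ij}$ and, symmetrically, $\psi \colon \mc{B}(\E)^{op} \to \mc{B}(\E^T)$ by $\psi(a_{ij}) = b_{ij}$ (using $(\E^T)^T = \E$).

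The key step is to verify that $\phi$ respects the two defining matrix relations of $\mc{B}(\E^T)$, namely $\B (\E^T)^{-1} \B^T \E^T = \I = (\E^T)^{-1} \B^T \E^T \B$. Expanding them in components and then passing to $\mc{B}(\E)^{op}$ (where each product $b_{\cdot\cdot}\, b_{\cdot\cdot}$ becomes $a_{\cdot\cdot} \cdot_{op} a_{\cdot\cdot} = a_{\cdot\cdot} a_{\cdot\cdot}$ with the factors reversed), one can repackage the scalars as matrix products and recognize the resulting identities as $\E \A \E^{-1} \A^T = \I$ and $\A^T \E \A \E^{-1} = \I$ in $\mc{B}(\E)$. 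Both are immediate consequences of the original relations $\A \E^{-1} \A^T \E = \I$ and $\E^{-1} \A^T \E \A = \I$ after multiplication by $\E^{\pm 1}$. The analogous check for $\psi$ is formally identical.

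Once $\phi$ and $\psi$ are shown to be well-defined algebra homomorphisms, compatibility with the coalgebra structures is immediate: the comultiplications and counits on $\mc{B}(\E^T)$ and on $\mc{B}(\E)^{op}$ (whose coalgebra structure agrees with that of $\mc{B}(\E)$) both take the matrix-coproduct form $\Delta(\cdot_{ij}) = \sum_s \cdot_{is} \otimes \cdot_{sj}$ and $\ee(\cdot_{ij}) = \delta_{ij}$ on generators, so $\phi$ and $\psi$ are bialgebra maps. Compatibility with the antipodes then follows automatically from uniqueness. Since $\phi$ and $\psi$ are mutually inverse on generators, they extend to mutually inverse Hopf algebra isomorphisms. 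The only place care is needed is bookkeeping of transposes, matrix index conventions, and the order-reversal of products when passing into the opposite algebra — this is the routine computation referenced just before the lemma statement, and I do not expect any genuine obstacle.
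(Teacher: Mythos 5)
Your proposal is correct and is precisely the ``routine computation'' that the paper omits: the map $b_{ij}\mapsto a_{ij}$ sends the defining relations of $\mc{B}(\E^T)$, after reversing products in $\mc{B}(\E)^{op}$, to the identities $\E\A\E^{-1}\A^T=\I$ and $\A^T\E\A\E^{-1}=\I$, which are the original relations conjugated by $\E$, and the coalgebra compatibility plus automatic preservation of antipodes by bialgebra maps finishes the argument. Since the paper gives no proof beyond asserting the computation is routine, your write-up is essentially the intended one.
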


The result above is expected as $A(\E)^{op}\cong A(\E^T)$ as $\kk$-algebras.
Now we establish Theorem~\ref{thm:homintro} via the results below.

\begin{proposition}\label{prop:AS}
The quantum group $\mc{B}:=\mc{B}(\E)$ is Artin-Schelter regular of global dimension 3.
\end{proposition}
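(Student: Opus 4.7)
The plan is to leverage Bichon's explicit free resolution of the counit $\kk$ over $\mc{B}:=\mc{B}(\E)$ constructed in \cite{Bichon}. That work produces a finite-rank free resolution of the form
$$0 \longrightarrow \mc{B} \longrightarrow \mc{B}^{n} \longrightarrow \mc{B}^{n} \longrightarrow \mc{B} \longrightarrow {}_{\mc{B}}\kk \longrightarrow 0,$$
with differentials given explicitly in terms of the generators $a_{ij}$ and the matrix $\E$. Two consequences are immediate: $\mc{B}$ has (left) global dimension at most $3$, and $\Ext^{*}_{\mc{B}}({}_{\mc{B}}\kk,\,{}_{\mc{B}}\mc{B})$ can be computed directly from the dual complex.

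Next, I would carry out that dualization. Applying $\Hom_{\mc{B}}(-,\,{}_{\mc{B}}\mc{B})$ to the truncated resolution yields a four-term complex of finitely generated right $\mc{B}$-modules whose cohomology in degree $i$ is $\Ext^{i}_{\mc{B}}({}_{\mc{B}}\kk,\,{}_{\mc{B}}\mc{B})$. The key claim is that this cohomology vanishes for $i=0,1,2$ and equals $\kk$ for $i=3$; in particular, global dimension is exactly $3$ (not smaller). This is precisely the content of Bichon's computation showing that $\mc{B}(\E)$ is twisted Calabi--Yau of dimension $3$, so the calculation can be quoted rather than repeated.

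To transfer the Ext condition to the right-module side, I would invoke Lemma~\ref{lem:B(E)op}, which gives $\mc{B}(\E)^{op} \cong \mc{B}(\E^T)$ as Hopf algebras. Applying the left-module result already established to $\mc{B}(\E^T)$ yields the required computation $\Ext^{i}_{\mc{B}}(\kk_{\mc{B}},\,\mc{B}_{\mc{B}}) = \delta_{i,3}\,\kk$. Combined with finite global dimension $3$ (which bounds both-sided injective dimension), this verifies conditions (i)--(iii) of Definition~\ref{def:ASreg} in the ungraded sense used for Hopf algebras in this paper.

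The main technical input is already packaged in Bichon's resolution and his dualization computation, so the only real work in the write-up is checking that the one-dimensional Ext in top degree does yield the trivial $\mc{B}$-module on the nose (rather than a twist), which is where the Calabi--Yau/skew Calabi--Yau refinement enters for parts (b) and (c) of Theorem~\ref{thm:homintro}; for AS regularity alone, one only needs the rank-one conclusion. The potential obstacle is purely bookkeeping: keeping straight left vs.\ right module structures under the $\mc{B}(\E) \leftrightarrow \mc{B}(\E^T)$ duality, which Lemma~\ref{lem:B(E)op} handles cleanly.
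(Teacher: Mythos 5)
Your proposal is correct and follows the same overall strategy as the paper's proof: Bichon's length-$3$ free resolution of the counit gives $\gldim \mc{B} \le 3$, Lemma~\ref{lem:B(E)op} ($\mc{B}(\E)^{op} \cong \mc{B}(\E^T)$) handles the left/right transfer, and Bichon's homological computations supply the Gorenstein condition. The differences are in sourcing. For the lower bound on global dimension the paper goes through homological smoothness \cite[Theorem~6.1]{Bichon} together with \cite[Lemma~5.2(a)]{BrownZhang:Dualizing}, whereas you read it off from $\Ext^3_{\mc{B}}(\kk,\mc{B})\neq 0$; both work. For the Ext computation the paper does not dualize the resolution by hand: it applies Bichon's one-sided Poincar\'e duality $\Ext^i_{\mc{B}}(\kk_{\mc{B}},M_{\mc{B}})\cong \text{Tor}^{3-i}_{\mc{B}}(\kk_{\mc{B}},{}_{\theta}M)$ with $M=\mc{B}$, where ${}_{\theta}\mc{B}\cong{}_{\mc{B}}\mc{B}$ is free, so the Tor collapses immediately. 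One caution about your sourcing: the ``twisted Calabi--Yau'' statement concerns the bimodule groups $\Ext^*_{\mc{B}^e}(\mc{B},\mc{B}^e)$, not the one-sided groups $\Ext^*_{\mc{B}}(\kk,\mc{B})$, and passing between the two requires the standard Hopf-algebra identification --- which the paper invokes only later, in Proposition~\ref{prop:skewCY}, and in the opposite direction. Citing Bichon's one-sided duality directly, as the paper does, makes that bookkeeping disappear; and, as you note at the end, for AS regularity alone only the one-dimensionality of the top Ext matters, so the precise twist is irrelevant at this stage.
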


\begin{proof}
By forgetting the right $\mc{B}$-coaction of the Yetter-Drinfeld resolution of the counit of $\mc{B}$ presented in \cite[Theorem~5.1]{Bichon}, we obtain a length 3 resolution  of the trivial module $\kk_{\mc{B}}$. Now we have that  l.gl.dim($\mc{B}$) = r.gl.dim($\mc{B}$)$ ~\leq 3$ by \cite[page 37]{LL} and Lemma~\ref{lem:B(E)op}. By \cite[Theorem~6.1]{Bichon}, we get that $\mc{B}$ is homologically smooth of dimension 3. So, by \cite[Lemma~5.2(a)]{BrownZhang:Dualizing}, we also get that gl.dim($\mc{B}$)$ ~\geq 3$. Thus, gl.dim($\mc{B}$)$ ~=3$.

Now, with Lemma~\ref{lem:B(E)op}, it suffices to show that $\mc{B}$ is right AS Gorenstein. By \cite[Proposition~6.2]{Bichon}, we have that $\Ext_{\mc{B}}^i(\kk_{\mc{B}}, ~M_{\mc{B}}) = \text{Tor}_{\mc{B}}^{3-i}(\kk_{\mc{B}},~ _{\theta}M)$,  for $i =0,1,2,3$. Here, $\theta$ is the algebra anti-automorphism of $\mc{B}$ defined by $\theta(\A) = S(\A)\E^{-1}\E^T \E^{-1} \E^T$, and $_{\theta} M$ has the left $\mc{B}$-module structure given by $b \cdot m = m \cdot \theta(b)$. Take $M_{\mc{B}}$ to be $\mc{B}_{\mc{B}}$. Then, it is easy to see that $_{\theta} M \cong ~_{\mc{B}}\mc{B}$. Since $_{\mc{B}}\mc{B}$ is projective, we have that $\text{Tor}_{\mc{B}}^i(\kk_{\mc{B}},~_{\mc{B}}\mc{B}) = \kk \otimes_{\mc{B}} \mc{B} =\kk$ if $i=0$, and 0 otherwise. Thus, $\Ext_{\mc{B}}^3(\kk_{\mc{B}}, ~\mc{B}_{\mc{B}}) = \kk$, and  $\Ext_{\mc{B}}^i(\kk_{\mc{B}}, ~\mc{B}_{\mc{B}}) = 0$ for $i \neq 3$, as desired.
\end{proof}

\begin{proposition}\label{prop:skewCY}
The quantum group $\mc{B}(\E)$ is skew Calabi-Yau \textnormal{(}homologically smooth of dimension 3\textnormal{)}.
\end{proposition}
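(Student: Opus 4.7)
The plan is to verify the two conditions of Definition~\ref{def:skewCY} for $\mc{B}=\mc{B}(\E)$ with $d=3$. The first condition, homological smoothness of dimension $3$, is exactly the content of Bichon's \cite[Theorem~6.1]{Bichon}, which was already invoked in the proof of Proposition~\ref{prop:AS}. Indeed, Bichon constructs an explicit free Yetter-Drinfeld resolution of the counit in \cite[Theorem~5.1]{Bichon}, and its ``bimodule version'' yields a length-$3$ resolution of $\mc{B}$ in $\mc{B}^e$-Mod by finitely generated free $\mc{B}$-bimodules. So for condition (i) I would simply cite \cite[Theorem~6.1]{Bichon}.

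For the rigid Gorenstein condition (ii), the plan is to apply $\Hom_{\mc{B}^e}(-,\mc{B}^e)$ to the finite free bimodule resolution above and compute cohomology. The terms of this resolution are built from the generating matrix $\A$ and the bilinear form $\E$, and the differentials are determined by the defining relations $\A\E^{-1}\A^T\E = \I = \E^{-1}\A^T\E\A$. Dualizing gives a complex whose cohomology in degree $i\neq 3$ vanishes and whose top cohomology is one-dimensional over $\mc{B}$; identifying the right/left $\mc{B}$-action shows that $\Ext^3_{\mc{B}^e}(\mc{B},\mc{B}^e) \cong {}^\mu \mc{B}^1$ as $\mc{B}$-bimodules, where $\mu$ is the algebra automorphism of $\mc{B}$ determined by $\mu(\A) = \E^{-1}\E^T\A\E^{-1}\E^T$ (equivalently the automorphism $\theta^{-1}$ arising in the proof of Proposition~\ref{prop:AS} via \cite[Proposition~6.2]{Bichon}).

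Concretely, the cleanest route is to reuse the Poincar\'e-duality-type isomorphism of \cite[Proposition~6.2]{Bichon}, but for the regular bimodule instead of a one-sided module: writing $\mc{B}^e$ as $\mc{B} \otimes \mc{B}^{op}$ and computing the Tor-side via the resolution, the vanishing in degrees $i\neq 3$ follows because $\mc{B}^e$ is free as a $\mc{B}$-module on either side, and the identification of the top Ext with a rank-one twisted bimodule follows from the explicit form of the $\Hom$-dual differential at the highest step.

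The main obstacle I anticipate is the bookkeeping in identifying the left/right $\mc{B}$-module structure on $\Ext^3_{\mc{B}^e}(\mc{B},\mc{B}^e)$; concretely, extracting the Nakayama automorphism $\mu$ from the dualized resolution requires carefully tracking how $\E$ and $\E^T$ enter the final differential. This is a routine but delicate computation that essentially repackages Bichon's analysis in \cite[\S 5 and \S 6]{Bichon}; no new ideas beyond his are needed, which is why the statement of the proposition says the result follows ``essentially from work of Bichon.''
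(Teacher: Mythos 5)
Your treatment of condition (i) coincides with the paper's: both simply cite \cite[Theorem~6.1]{Bichon} for homological smoothness of dimension 3. For condition (ii), however, the paper takes a different and much less computational route than the one you propose. Rather than dualizing the explicit bimodule resolution, it invokes the standard reduction for Hopf algebras with bijective antipode (\cite[Lemmas~2.2(c), 2.4(b), 4.5]{BrownZhang:Dualizing}): $\Ext^i_{\mc{B}^e}(\mc{B},\mc{B}^e) \cong \Ext^i_{\mc{B}}(\kk, L(\mc{B}^e)) \cong \Ext^i_{\mc{B}}(\kk,\mc{B})\otimes_{\mc{B}} L(\mc{B}^e)$, where $L(\mc{B}^e)$ is free as a left $\mc{B}$-module, and then feeds in the one-sided AS Gorenstein property already established in Proposition~\ref{prop:AS}. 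This yields concentration in degree 3 and the rank-one twisted bimodule ${}^{\mu}\mc{B}^1$ with no further computation, and without needing to identify $\mu$ explicitly (that identification is deferred to \cite[Corollary~6.3]{Bichon}, quoted as Lemma~\ref{L:NAutoB}).

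Your route could in principle be carried out --- Bichon essentially performs such a computation to obtain the Nakayama automorphism --- but as written it has a gap. The ``cleanest route'' you describe, a bimodule version of \cite[Proposition~6.2]{Bichon}, is not available off the shelf: that proposition is a one-sided Poincar\'e duality $\Ext^i_{\mc{B}}(\kk_{\mc{B}},M_{\mc{B}})\cong \text{Tor}^{\mc{B}}_{3-i}(\kk_{\mc{B}},{}_{\theta}M)$, and its bimodule analogue (van den Bergh duality) presupposes essentially the statement you are trying to prove, namely that $\Ext^*_{\mc{B}^e}(\mc{B},\mc{B}^e)$ is concentrated in degree 3 and invertible there. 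Likewise, the claim that ``the vanishing in degrees $i\neq 3$ follows because $\mc{B}^e$ is free as a $\mc{B}$-module on either side'' is not a complete argument: that freeness only supplies the first half of the Brown--Zhang reduction, and one still needs the one-sided AS Gorenstein property of Proposition~\ref{prop:AS} to get the vanishing. If you replace these two assertions by the Brown--Zhang reduction plus Proposition~\ref{prop:AS}, you recover the paper's proof; if you instead insist on the direct dualization, you must actually write down the dualized differentials and compute their cohomology, which the proposal defers as ``routine but delicate'' without executing it.
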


\begin{proof}
By \cite[Theorem~6.1]{Bichon}, we have that $\mc{B}$ is homologically smooth of dimension 3. Since $\mc{B}$ has a bijective antipode, the rigid Gorenstein property follows from the AS Gorenstein property established in Proposition~\ref{prop:AS}. Namely apply \cite[Lemmas~2.2(c), 2.4(b), 4.5]{BrownZhang:Dualizing} as follows; all of these results  do not require $\mc{B}$ to be Noetherian. We first get, for all $i$, that $\Ext^i_{\mc{B}^e}(\mc{B},\mc{B}^e) = \Ext^i_{\mc{B}}(\kk, L(\mc{B}^e))$, where $L(\mc{B}^e)$ is a free left $\mc{B}$-module defined by $b \cdot m = \sum b_1 m S(b_2)$ for all $m \in B^e$. So, we get that $\Ext^i_{\mc{B}^e}(\mc{B},\mc{B}^e) = \Ext^i_{\mc{B}}(\kk, \mc{B}) \otimes_{\mc{B}} L(\mc{B}^e)$, for all $i$. By the AS Gorenstein property, we now have $\Ext^3_{\mc{B}^e}(\mc{B},\mc{B}^e) = \kk \otimes_{\mc{B}} L(\mc{B}^e)$ and $\Ext^i_{\mc{B}^e}(\mc{B},\mc{B}^e) = 0$, for $i \neq 3$. Since $\kk \otimes_{\mc{B}} L(\mc{B}^e) \cong {}^\mu \mc{B}^{1}$ as $\mc{B}$-bimodules, for some algebra automorphism $\mu$ of $\mc{B}$, we are done.
\end{proof}

Now we recall from \cite{Bichon} the computation of the Nakayama automorphism of $\mc{B}:=\mc{B}(\E)$.  

\begin{lemma} \cite[Corollary~6.3]{Bichon} \label{L:NAutoB} 
The Nakayama automorphism $\mu_{\mc{B}}$ of $\mc{B}(\E)$ is given by $$\mu_{\mc{B}}(\A)=\E^{-1}\E^T\A\E^{-1}\E^T.$$

\vspace{-.25in} \qed 
\end{lemma}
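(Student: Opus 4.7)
The plan is to compute $\mu_{\mc{B}}$ by tracking $\mc{B}$-bimodule structures through the chain of isomorphisms already set up in the proof of Proposition~\ref{prop:skewCY}. Since $\mc{B}(\E)$ is connected $\N$-graded, its Nakayama automorphism is uniquely determined as a graded algebra map, so it suffices to determine the action of $\mu_{\mc{B}}$ on the degree-one generating subspace forming the matrix $\A = (a_{ij})$.

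First, from the proof of Proposition~\ref{prop:skewCY} we have the chain
\[
\Ext^3_{\mc{B}^e}(\mc{B},\mc{B}^e) \;\cong\; \Ext^3_{\mc{B}}(\kk,\, L(\mc{B}^e)) \;\cong\; \kk \otimes_{\mc{B}} L(\mc{B}^e) \;\cong\; {}^{\mu_{\mc{B}}}\mc{B}^1,
\]
where the middle step uses freeness of $L(\mc{B}^e)$ as a left $\mc{B}$-module, combined with Bichon's duality $\Ext^i_{\mc{B}}(\kk_{\mc{B}}, M_{\mc{B}}) \cong \mathrm{Tor}^{\mc{B}}_{3-i}(\kk_{\mc{B}}, {}_\theta M)$ with $\theta(\A) = S(\A)\E^{-1}\E^T \E^{-1}\E^T$ (from the proof of Proposition~\ref{prop:AS}). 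The second step is to trace the \emph{outer} $\mc{B}$-bimodule structure on $\mc{B}^e$ through each of these isomorphisms: on $L(\mc{B}^e) = \mc{B}\otimes\mc{B}$ one summand of the bimodule structure is absorbed by the adjoint left action $b\cdot(x\otimes y) = \sum b_1 x \otimes y S(b_2)$, and the commuting outer bimodule structure is what survives after tensoring with $\kk$ over $\mc{B}$. Applying Bichon's duality introduces a further twist by $\theta$, which carries the matrix factor $\E^{-1}\E^T\E^{-1}\E^T$; combining this with the antipode twist built into $L(-)$ and simplifying via $S(\A) = \E^{-1}\A^T\E$ should leave exactly one factor of $\E^{-1}\E^T$ on each side of $\A$, producing the formula $\mu_{\mc{B}}(\A) = \E^{-1}\E^T \A \E^{-1}\E^T$.

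The main obstacle I anticipate is the careful bookkeeping with left/right $\mc{B}$-module conventions and the composition of the two distinct twists, both involving the matrix $\E^{-1}\E^T$: one from Bichon's $\theta$ and one from the adjoint-type left action defining $L(\mc{B}^e)$. One must verify that these combine to give a conjugation by $\E^{-1}\E^T$ on each side of $\A$ rather than partially cancelling. A useful sanity check is to specialize to $\E = \DD_2(q)$, where $\E^{-1}\E^T = \mathrm{diag}(q^{-1}, q)$ and the claimed formula predicts $\mu_{\mc{B}}(a) = q^{-2}a$, $\mu_{\mc{B}}(b) = b$, $\mu_{\mc{B}}(c) = c$, $\mu_{\mc{B}}(d) = q^2 d$, which can be compared against the known Nakayama computation for the quantum $SL_2$-type Hopf algebra $\mc{O}_{-q}(SL_2) \cong \mc{B}(\DD_2(q)^{-1})$ via Corollary~\ref{C:SLGL}(b).
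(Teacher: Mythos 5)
The paper offers no proof of this lemma: the statement is quoted directly from Bichon's Corollary~6.3, which is why it carries a \verb|\qed| with no argument. Your proposed route --- computing the bimodule $\Ext^3_{\mc{B}^e}(\mc{B},\mc{B}^e)$ by reducing to $\Ext^3_{\mc{B}}(\kk, L(\mc{B}^e))$ and then applying the Poincar\'e duality twist $\theta(\A)=S(\A)\E^{-1}\E^T\E^{-1}\E^T$ --- is essentially the derivation Bichon himself gives, so you are reconstructing the cited proof rather than replacing it; the outline is sound and your $\DD_2(q)$ sanity check is correct (with $\E^{-1}\E^T=\mathrm{diag}(q^{-1},q)$ one indeed recovers the known Nakayama automorphism of the quantum $SL_2$). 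The one real shortfall is that the decisive step is asserted rather than carried out: the sentence ``should leave exactly one factor of $\E^{-1}\E^T$ on each side of $\A$'' \emph{is} the content of the lemma. To close it, it is cleaner to separate the two twists explicitly: the duality anti-automorphism $\theta$ is $S$ composed with the winding automorphism of the character $\pi$ whose matrix of values on the generators is $(\E^{-1}\E^T)^2$, while $S^2(\A)=\X\A\X^{-1}$ for $\X=\E^{-1}\E^T$ (as computed in the proof of Corollary~\ref{cor:CY}); the general homological identity expressing $\mu_{\mc{B}}$ as $S^2$ composed with this winding automorphism then gives
$\mu_{\mc{B}}(\A)=\X\A\X^{-1}\cdot\X^{2}=\X\A\X$,
which is the claimed formula and is consistent with the identity $S^2(\B)={\sf D}\M\B\M^{-1}{\sf D}^{-1}$ of Lemma~\ref{L:quantumGrp}(c) upon setting ${\sf D}=1$. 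With that computation supplied, your argument is a complete and faithful rendering of the proof the paper outsources to Bichon.
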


\begin{corollary}\label{cor:CY} 
If $\E$ is symmetric or skew-symmetric, then $\mc{B}(\E)$ is both Calabi-Yau and involutory. 
\end{corollary}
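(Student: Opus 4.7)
The plan is to deduce both conclusions directly from Lemma~\ref{L:NAutoB} and the defining formula $S(\A) = \E^{-1}\A^T\E$, using only that $\E^T = \pm\E$.

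\smallskip

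For the Calabi-Yau property, I would first observe that when $\E^T = \varepsilon\E$ with $\varepsilon \in \{+1,-1\}$, the matrix $\E^{-1}\E^T$ equals $\varepsilon\I$. Substituting this into the formula $\mu_{\mc{B}}(\A) = \E^{-1}\E^T\A\E^{-1}\E^T$ from Lemma~\ref{L:NAutoB} yields $\mu_{\mc{B}}(\A) = (\varepsilon\I)\A(\varepsilon\I) = \varepsilon^2 \A = \A$. Since the generators $a_{ij}$ are fixed, $\mu_{\mc{B}}$ is the identity automorphism of $\mc{B}(\E)$, which is in particular inner. By Definition~\ref{def:skewCY}(c) (together with the fact, established in Proposition~\ref{prop:skewCY}, that $\mc{B}(\E)$ is skew Calabi-Yau), this gives that $\mc{B}(\E)$ is Calabi-Yau.

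\smallskip

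For the involutory property, I would compute $S^2(\A)$ directly from the antipode formula in Definition~\ref{def:B(E)}(a). Applying $S$ twice gives
\[
S^2(\A) \;=\; S\bigl(\E^{-1}\A^T\E\bigr) \;=\; \E^{-1}\,S(\A)^T\,\E \;=\; \E^{-1}\E^T\A(\E^{-1})^T\E.
\]
When $\E^T = \varepsilon\E$, we also have $(\E^{-1})^T = \varepsilon\E^{-1}$, so the right-hand side becomes $\varepsilon^2 \E^{-1}\E\A\E^{-1}\E = \A$. Since the $a_{ij}$ generate $\mc{B}(\E)$ as an algebra and $S^2$ is an algebra homomorphism, this forces $S^2 = \id$.

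\smallskip

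I do not anticipate a genuine obstacle here: the result is a two-line consequence of Lemma~\ref{L:NAutoB} and the antipode formula once one notices that $\E^{-1}\E^T$ becomes $\pm\I$ under the hypothesis. The only mild subtlety is being careful with the skew-symmetric case, where the sign in $(\E^{-1})^T = -\E^{-1}$ must be paired correctly with $\E^T = -\E$ so that the signs cancel. Apart from tracking signs, this is entirely formal.
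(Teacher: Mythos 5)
Your proof is correct and follows essentially the same route as the paper: both arguments reduce to the observation that $\X=\E^{-1}\E^T=\pm\I$ under the hypothesis, so that $S^2(\A)=\X\A\X^{-1}=\A$ and, via Lemma~\ref{L:NAutoB}, $\mu_{\mc{B}}(\A)=\X\A\X=\A$. The paper merely states the Calabi-Yau conclusion as an immediate consequence of Lemma~\ref{L:NAutoB}, whereas you spell out the sign cancellation; there is no substantive difference.
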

\begin{proof}
The antipode of $\mc{B}(\E)$ is given by $S(\A) = \E^{-1}\A^T \E$. So, $S^2(\A) = S(\E^{-1} \A^T \E) = \E^{-1} S(\A)^T \E = \X \A \X^{-1}$, for $\X=\E^{-1}\E^T$. Since $\E$ is symmetric or skew-symmetric, we get that $\X = \pm \I$ and  $\mc{B}(\E)$ is involutory. 
 Moreover, we have that $\mc{B}(\E)$ is Calabi-Yau by Lemma \ref{L:NAutoB}.
\end{proof}


\section{Ring-theoretic properties of $\mc{O}_{A(\E)}(GL/S^2)$, $\mc{O}^c_{A(\E)}(GL)$, and $\mc{O}_{A(\E)}(SL)$} \label{sec:ring}

In this section, we establish ring-theoretic properties of the variants of the quantum general linear groups $\mc{O}^c_{A(\E)}(GL) \cong \mc{G}(\E^{-1},\E)$, $\mc{O}_{A(\E)}(GL/S^2) \cong \mc{G}(\E^{-1},\E^T)$, and quantum special linear groups $\mc{O}_{A(\E)}(SL) \cong \mc{B}(\E^{-1})$ (see Corollary~\ref{C:SLGL}). Namely, we verify Theorem~\ref{thm:ringintro}; see Theorem~\ref{thm:ring} below. To begin, consider the following notation.

\begin{notation}[$\T_{i,j}$, $\LL$, $\U$] \label{not:e,g,h} Take $\T_{i,j}$ to be the matrix in $M_n(\kk)$ with $1$ in the $(i,j)$-th entry and $0$ elsewhere. We denote by $\LL=\sum_{2\le i\le n}\T_{i,i-1}$ the matrix having $1$'s on the subdiagonal.  Likewise, we denote by $\U=\sum_{1\le i\le n-1}\T_{i,i+1}$ the matrix having $1$'s on the superdiagonal. 
\end{notation}

Notice that $\LL^m=\sum_{m+1\le i\le n}\T_{i,i-m}$ and $\LL^m=0$ for $m\ge n$. Similarly, we have $\U^m=\sum_{1\le i\le n-m}\T_{i,i+m}$ and $\U^m=0$ for $m\ge n$. By convention, $\LL^0=\U^0=\I$.

\begin{lemma}\label{lem:matrix}
For any $\M\in M_n(\kk)$, we have the following statements:
\begin{enumerate}
\item If $(\sum_{i\ge 0} a_i\LL^i)\M=\M(\sum_{i\ge 0} a_i\LL^i)$ for $a_i \in\kk$ with $a_1\neq 0$, then $\M=\sum_{i\ge 0} p_i\LL^i$ for some $p_i\in\kk$.
\item If $(\sum_{i\ge 0} b_i\U^i)\M=\M(\sum_{i\ge 0} b_i\U^i)$ for $b_i \in \kk$ with $b_1\neq 0$, then $\M=\sum_{i\ge 0} q_i\U^i$ for some $q_i\in\kk$.
\item Let $\psi$ be an endomorphism of $M_n(\kk)$ defined by $$\M\mapsto \left(\textstyle \sum_{i\ge 0} a_i\LL^i\right)\M-\M\left( \textstyle \sum_{i\ge 0} b_i\U^i\right).$$ If $a_0\neq b_0$, then $\psi$ is bijective.
\item Let $\phi$ be an endomorphism of $M_n(\kk)$ defined by 
$$\M\mapsto \left(\textstyle \sum_{i\ge 0} b_i\U^i\right)\M-\M\left(\textstyle \sum_{i\ge 0} a_i\LL^i\right).$$  If $a_0\neq b_0$, then $\phi$ is bijective.
\end{enumerate}
\end{lemma}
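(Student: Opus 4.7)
The plan is to reduce every part of this lemma to facts about nilpotent matrices together with the well-known description of the centralizer of a non-derogatory matrix. Both $\LL$ and $\U$ are (transposes of) single nilpotent Jordan blocks of size $n$; in particular each is non-derogatory, so its centralizer in $M_n(\kk)$ equals the $n$-dimensional algebra $\kk[\LL]$ (respectively $\kk[\U]$) it generates. This is the only external input I plan to invoke.

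For (a), set $\N_1 := \sum_{i\ge 0} a_i\LL^i$. I would first establish the stronger claim that $\kk[\N_1] = \kk[\LL]$ whenever $a_1 \ne 0$; part (a) then follows because any matrix commuting with $\N_1$ automatically commutes with every element of $\kk[\N_1]$, in particular with $\LL$, and hence lies in $C_{M_n(\kk)}(\LL) = \kk[\LL]$. The inclusion $\kk[\N_1] \subseteq \kk[\LL]$ is immediate. For the reverse inclusion, I would use a reverse induction based on the expansion $(\N_1 - a_0\I)^k = a_1^k \LL^k + O(\LL^{k+1})$, where $a_1 \ne 0$ guarantees that the leading $\LL$-coefficient is nonzero: starting from $k = n-1$ (at which point $(\N_1 - a_0\I)^{n-1} = a_1^{n-1}\LL^{n-1}$ exactly, because higher powers of $\LL$ vanish) and peeling off one power of $\LL$ at a time, I would recover each $\LL^k \in \kk[\N_1]$ down to $k = 1$. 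Part (b) is then identical with $\U$ in place of $\LL$.

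For (c) and (d), I would write $\N_1 = a_0\I + \N_1'$ and $\N_2 = b_0\I + \N_2'$, where $\N_1' := \sum_{i \ge 1} a_i\LL^i$ and $\N_2' := \sum_{i \ge 1} b_i\U^i$ are nilpotent (sums of commuting nilpotents). Letting $L_X$ and $R_Y$ denote left and right multiplication on $M_n(\kk)$, which always commute with each other, the operator in (c) decomposes as
$$
\psi \;=\; L_{\N_1} - R_{\N_2} \;=\; (a_0 - b_0)\,\id_{M_n(\kk)} \;+\; \bigl(L_{\N_1'} - R_{\N_2'}\bigr),
$$
and the operator in (d) analogously as $\phi = (b_0 - a_0)\,\id_{M_n(\kk)} + \bigl(L_{\N_2'} - R_{\N_1'}\bigr)$. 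In each case $L_{\N_i'}$ and $R_{\N_j'}$ are commuting nilpotent endomorphisms of $M_n(\kk)$, so their difference is itself nilpotent, and the hypothesis $a_0 \ne b_0$ makes the scalar summand a unit. Hence both operators are a nonzero scalar plus a nilpotent, and therefore invertible.

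The only mildly delicate step I anticipate is the reverse induction in (a), where one must track the coefficient $a_1 \ne 0$ propagating to the leading $a_1^k$ of $(\N_1 - a_0\I)^k$; everything else, including (c) and (d), is immediate from the standard fact that a nonzero scalar plus a nilpotent operator is invertible, together with the commutativity $L_X R_Y = R_Y L_X$.
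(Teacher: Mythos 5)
Your proof is correct, but it follows a genuinely different route from the paper's in both halves. For (a)--(b) the paper also rests on the non-derogatory property---it notes that the minimal polynomial of $\sum_{i\ge 0}a_i\LL^i$ is $(x-a_0)^n$ when $a_1\neq 0$---but then reduces ``after a linear transformation'' to the case $a_2=\cdots=a_{n-1}=0$ and finishes with an unstated direct computation; your route of proving $\kk[\sum_i a_i\LL^i]=\kk[\LL]$ by reverse induction on $k$ via $(\sum_{i\ge 1}a_i\LL^i)^k=a_1^k\LL^k+O(\LL^{k+1})$, and then invoking $\mathcal{C}(\LL)=\kk[\LL]$, is more self-contained and sidesteps the reduction step entirely. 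For (c)--(d) the divergence is sharper: the paper proves injectivity by induction on the matrix size $n$, chasing entries of $\X:=(\sum a_i\LL^i)\M-\M(\sum b_i\U^i)$ (the $(1,1)$ entry gives $(a_0-b_0)m_{11}=0$, then the first row and column of $\M$ vanish, and induction applies to the remaining block), with surjectivity following from finite-dimensionality; you instead decompose the Sylvester-type operator $L_{\N_1}-R_{\N_2}$ as $(a_0-b_0)\,\id$ plus a difference of commuting nilpotent multiplication operators, which is shorter, more conceptual, and yields invertibility directly without the injective-implies-bijective step. Both arguments are complete; the paper's is more elementary (pure entry manipulation), while yours buys a cleaner structural explanation of why $a_0\neq b_0$ is exactly the right hypothesis.
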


\begin{proof}
It suffices to show (a) and (c); the proofs of (b) and (d) are similar. 

(a) It is clear that the minimal polynomial of $\sum_{i\ge 0} a_i\LL^i$ is $(x-a_0)^{n}$ since $a_1\neq 0$. So, after a linear transformation, we can assume that $a_2=a_3=\cdots =a_{n-1}=0$ and $a_1\neq 0$. Then, the result follows from a direct computation. 

(c) To prove the injectivity of $\psi$, say $\X:=(\sum_{i\ge 0} a_i\LL^i)\M-\M(\sum_{i\ge 0} b_i\U^i)=0$ for $\M=:(m_{ij})$. We proceed by induction on the size of the matrices, $n$. If $n=1$, then the result clearly holds. Further, for $n >1$, it is clear that the $(1,1)$-th entry of $\X$ is $(a_0-b_0)m_{11}$. So, $m_{11}=0$ since $a_0 \neq b_0$. Then, the $(1,2)$-th entry of $\X$ is $(a_0-b_0)m_{12}=0$, which again implies that $m_{12}=0$. The same is true for $m_{21}$. Continuing in this manner, we see that the first column and the first row of $\M$ are zero. Now injectivity follows from induction. Since $M_n(\kk)$ is finite dimensional, the map $\psi$ is also surjective. 
\end{proof}

Let us study the  quantum linear groups ${\mathcal O}^{c}_{A}(GL/S^\infty)$ and ${\mathcal O}_{A}(SL/S^\infty)$, for $A$ an AS regular algebra of Jordan type. Compare to Example~\ref{ex:B(AJ)}.

\begin{proposition}\label{prop:GLJord}
For any AS regular algebra of Jordan type $A_{Jord}=A(\J_n)$, we have that 
\begin{enumerate}
\item ${\mathcal O}^{c}_{A_{Jord}}(GL/S^{2m})$ is generated by $a_0,a_1,\dots,a_{n-1}$ and central group-like elements $\sf D^{\pm 1}$, subject to the relations:
\[
a_0^2={\sf D} \quad \quad \text{and}
 \quad\quad
a_0a_{i-1}-a_1a_{i-2}+\cdots+(-1)^{i-1}a_{i-1}a_0=0 \quad \text{ ~~for all $2 \leq i \leq n$}
\]
\noindent with the coalgebra structure and antipode given by
\[
\begin{array}{c}
\Delta(a_0)=a_0\otimes a_0\quad\text{and}
\quad 
\Delta(a_{i-1})=a_0\otimes a_{i-1}+a_1\otimes a_{i-2}+\cdots+a_{i-1}\otimes a_0 \quad \text{ ~~for all $2 \leq i \leq n$}\\
\ee(a_{i})=\delta_{i0} \quad \text{ and  } \quad S(a_i)=(-1)^i{\sf D}^{-1}a_i \text{ ~~for all $0\le i\le n-1$;}
\end{array}
\]
\item ${\mathcal O}^{c}_{A_{Jord}}(GL/S^\infty)\cong {\mathcal O}^{c}_{A_{Jord}}(GL/S^2)$ as Hopf algebras, and ${\mathcal O}^{c}_{A_{Jord}}(GL/S^\infty)$ is involutory.
\end{enumerate}
\end{proposition}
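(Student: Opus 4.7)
The approach is to identify $\mc{O}^{c}_{A_{Jord}}(GL) \cong \mc{G}(\J_n^{-1}, \J_n)$ via Corollary~\ref{C:SLGL}(c), with matrix of generators $\B = (a_{ij})$ and central group-like ${\sf D}^{\pm 1}$; then impose the relations $S^{2m}(\B) = \B$ and compute what remains. By Lemma~\ref{L:quantumGrp}(b,c) together with the centrality of ${\sf D}$, the defining relations read $\B \J_n \B^T \J_n^{-1} = {\sf D}\I = \J_n \B^T \J_n^{-1} \B$, and the $S^2$-action is $S^2(\B) = \M \B \M^{-1}$ for $\M := -\J_n \J_n^{-T}$.

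The first key step is to verify by direct computation that $\M = (-1)^n \I + c_1 \LL + c_2 \LL^2 + \cdots + c_{n-1}\LL^{n-1}$ with $c_1 \ne 0$ (as concrete checks, $\M = \I - 2\LL$ for $n=2$ and $\M = -\I + 2\LL - 2\LL^2$ for $n=3$). Consequently $\M^m$ is again a polynomial in $\LL$ whose $\LL$-coefficient $m\, c_0^{m-1} c_1$ is nonzero for every $m \ge 1$. Thus the constraint $\M^m \B = \B \M^m$ (which is equivalent to $S^{2m}(\B) = \B$) combined with Lemma~\ref{lem:matrix}(a) forces $\B = \sum_{i=0}^{n-1} a_i \LL^i$. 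This already yields the claimed generating set, and the conclusion is \emph{independent of $m \ge 1$}.

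Next I would prove the intertwining identity $\J_n \U \J_n^{-1} = -\LL$ by checking entrywise that $(\J_n)_{i,j-1} = -(\J_n)_{i-1,j}$ (with out-of-range entries set to zero), which is immediate from the explicit definition of $\J_n$. Induction then gives $\J_n \U^i \J_n^{-1} = (-1)^i \LL^i$. Since $\B^T = \sum_i a_i \U^i$, the relation $\J_n \B^T \J_n^{-1} \B = {\sf D}\I$ becomes
\[
\textstyle \sum_{0 \le i, j \le n-1} (-1)^i\, a_i a_j\, \LL^{i+j} \;=\; {\sf D}\,\I,
\]
and matching coefficients of $\LL^m$ for $0 \le m \le n-1$ (higher powers vanish automatically) produces $a_0^2 = {\sf D}$ together with $\sum_{j+k=m}(-1)^j a_j a_k = 0$ for $1 \le m \le n-1$, exactly the list in~(a); the dual defining relation yields the same identities. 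The coalgebra formulas follow from $\Delta(\B) = \B \otimes \B$ read as matrix multiplication in $M_n(H \otimes H)$, giving $\Delta(a_m) = \sum_{j+k=m} a_j \otimes a_k$, and the antipode from $S(\B) = \J_n \B^T \J_n^{-1} {\sf D}^{-1}$, giving $S(a_i) = (-1)^i {\sf D}^{-1} a_i$. The converse direction (that the algebra with the given presentation does admit a Hopf structure satisfying $S^{2m} = \id$, hence identifies with the quotient) follows from universality together with these explicit formulas.

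For part (b), the $m$-independence of the presentation in (a) implies that the projection maps $\mc{O}^{c}_{A_{Jord}}(GL/S^{2m}) \twoheadrightarrow \mc{O}^{c}_{A_{Jord}}(GL/S^{2m'})$ are all isomorphisms, so the projective limit collapses to $\mc{O}^{c}_{A_{Jord}}(GL/S^2)$. Involutority is then a direct calculation: $S^2(a_i) = S((-1)^i {\sf D}^{-1} a_i) = (-1)^i S(a_i) S({\sf D}^{-1}) = {\sf D}^{-1} a_i {\sf D} = a_i$, using the centrality of ${\sf D}$. The main technical obstacle is the explicit Jordan-block form of $\M$ together with the intertwining identity $\J_n \U \J_n^{-1} = -\LL$; both are finite matrix identities that rest on the precise definition of the canonical matrix $\J_n$.
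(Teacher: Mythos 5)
Your proposal is correct and follows essentially the same route as the paper's proof: identify $\mc{O}^{c}_{A_{Jord}}(GL)$ with $\mc{G}(\J_n^{-1},\J_n)$ via Corollary~\ref{C:SLGL}, show the conjugating matrix for $S^{2m}$ is a polynomial in $\LL$ with nonvanishing linear coefficient so that Lemma~\ref{lem:matrix}(a) forces $\overline{\mathbb A}=\sum_i a_i\LL^i$, and then read off the relations, coproduct, and antipode, with part (b) following from the $m$-independence of the presentation. The only difference is that you make explicit the intertwining identity $\J_n\U\J_n^{-1}=-\LL$ and the coefficient matching, which the paper leaves as ``direct computation.''
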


\begin{proof}
(a) By Corollary \ref{C:SLGL}, we know that ${\mathcal O}^{c}_{A_{Jord}}(GL)\cong \mc{G}(\J_n^{-1}, \J_n)$ for $\E=\J_n$. Suppose that ${\mathcal O}^{c}_{A_{Jord}}(GL)$ is generated by entries of $\mathbb A:=(a_{i,j})_{1 \leq i, j \leq n}$ and $\sf D^{\pm 1}$. Moreover, we have that $S^2(\sf D)=\sf D$ and
$S^2(\mathbb A)=\X\mathbb A \X^{-1}$, where $\X:=\J_n(\J_n^{-1})^T$.
The Hopf ideal $\mc{L}_{2m}$ that defines the $S^{2m}$-trivial general linear quantum group $\mathcal O^{c}_{A_{Jord}} (GL/S^{2m})$ is generated by entries of the matrix:
$\X^m\mathbb A-\mathbb A\X^m$.
Using Notation~\ref{not:e,g,h}, direct computation shows that
\[
\X=(-1)^{n+1}\LL^0+2(-1)^n\LL+\cdots+2(-1)^2\LL^{n-1}.
\]
Hence, we have that $$\X^{m}=(-1)^{m(n+1)}\LL^0+2m(-1)^{nm+m-1}\LL+\cdots.$$ 
With a dual version of Lemma \ref{lem:matrix}(a), we get that $a_{i,j}=:a_{i-j}$ if $j\leq i$, and $a_{i,j} = 0$ if $j > i$ in the quotient space $\overline{\mathbb A}:=\mathbb A/(\X^m\mathbb A-\mathbb A\X^m)$.
So we take representatives $a_0,a_1,\dots,a_{n-1}$ such that
\begin{equation} \label{eq:Abar}
\overline{\mathbb A}=a_0\LL^0+a_1\LL+a_2\LL^2+\cdots+a_{n-1}\LL^{n-1}.
\end{equation}
Now using the structure of $\mc{G}(\J_n^{-1},\J_n)$ in Definition~\ref{def:B(E)}, one obtain the relations, coalgebra structure, and antipode of ${\mathcal O}^{c}_{AJord}(GL/S^{2m})$ as claimed by direct computation. 

(b) This follows from (a) since ${\mathcal O}^{c}_{A_{Jord}}(GL/S^{2m})$ stabilizes for $m\ge 1$.
\end{proof}

We get the following immediate consequence.
\begin{corollary}\label{Cor:SLJord}
For any AS regular algebra of Jordan type $A_{Jord}=A(\J_n)$, we have that 
\begin{enumerate}
\item  ${\mathcal O}_{A_{Jord}}(SL/S^{2m})$ is generated by $a_0,a_1,\dots,a_{n-1}$, subject to the relations:
\[
a_0^2=1 \quad\quad\text{and}
 \quad\quad
a_0a_{i-1}-a_1a_{i-2}+\cdots+(-1)^{i-1}a_{i-1}a_0=0 \quad \text{ ~~for all $2 \leq i \leq n$}
\]
\noindent with the coalgebra structure and antipode given by
\[
\begin{array}{c}
\Delta(a_0)=a_0\otimes a_0\quad\text{and}
\quad 
\Delta(a_{i-1})=a_0\otimes a_{i-1}+a_1\otimes a_{i-2}+\cdots+a_{i-1}\otimes a_0 \quad \text{ ~~for all $2 \leq i \leq n$}\\
\ee(a_{i})=\delta_{i0} \quad \text{ and  } \quad S(a_i)=(-1)^ia_i\text{ ~~for all $0\le i\le n-1$;}
\end{array} 
\] 
\item ${\mathcal O}_{A_{Jord}}(SL/S^\infty)\cong {\mathcal O}_{A_{Jord}}(SL/S^2)$ as Hopf algebras, and ${\mathcal O}_{A_{Jord}}(SL/S^\infty)$ is involutory.
\qed
\end{enumerate}
\end{corollary}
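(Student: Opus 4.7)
The plan is to derive Corollary~\ref{Cor:SLJord} directly from Proposition~\ref{prop:GLJord} by specializing the homological codeterminant to be trivial, i.e., setting $\sf D = 1$. More precisely, by Definition~\ref{def:OA(SL)}(a) we have $\mathcal{O}_A(SL) = \mathcal{O}_A(GL)/(\sf D - 1)$, and since $\sf D = 1$ is automatically central, this Hopf algebra is also the quotient of $\mathcal{O}^c_A(GL)$ by $(\sf D - 1)$. Taking the corresponding $S^{2m}$-trivial quotients commutes with this specialization, so
\[
\mathcal{O}_{A_{Jord}}(SL/S^{2m}) \;\cong\; \mathcal{O}^c_{A_{Jord}}(GL/S^{2m})/(\sf D - 1).
\]

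First I would verify this identification by chasing through Definition~\ref{def:OA(SL)} and Definition~\ref{def:OcA(GL)}: both quotients sit inside $\mathcal{O}_A(GL)$ and yield the same final algebra once $\sf D$ is killed, so the presentations of the two quantum groups agree after substituting $\sf D = \sf D^{-1} = 1$ throughout. Substituting $\sf D = 1$ into the presentation given in Proposition~\ref{prop:GLJord}(a) converts the relation $a_0^2 = \sf D$ into $a_0^2 = 1$, leaves all the cross relations $\sum_{k=0}^{i-1}(-1)^k a_k a_{i-1-k} = 0$ unchanged, preserves the coproducts on $a_0,\ldots,a_{n-1}$ (which do not involve $\sf D$), and converts $S(a_i) = (-1)^i \sf D^{-1} a_i$ into $S(a_i) = (-1)^i a_i$. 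This gives part (a).

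For part (b), the stabilization statement $\mathcal{O}_{A_{Jord}}(SL/S^\infty) \cong \mathcal{O}_{A_{Jord}}(SL/S^2)$ is inherited from its $GL$ analogue in Proposition~\ref{prop:GLJord}(b): the quotient by $(\sf D - 1)$ is a Hopf-algebra surjection and hence preserves the stabilization of the chain of Hopf ideals $\mathcal{L}_{2m}$ appearing in Definition~\ref{def:OA(SL)}(b,c). Finally, involutivity is immediate from the antipode formula in part (a): for every generator we compute
\[
S^2(a_i) \;=\; S\bigl((-1)^i a_i\bigr) \;=\; (-1)^i (-1)^i a_i \;=\; a_i,
\]
so $S^2 = \mathrm{id}$ on a generating set and therefore on the whole Hopf algebra.

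I do not anticipate a serious obstacle here; the only point requiring minor care is the first paragraph, namely verifying that passing from $\mathcal{O}^c_A(GL/S^{2m})$ to $\mathcal{O}_A(SL/S^{2m})$ really is just killing $\sf D$ (as opposed to enforcing an unwanted additional centrality condition). This is fine because $\sf D$ becomes the unit after specialization, trivially central.
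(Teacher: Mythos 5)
Your proposal is correct and follows exactly the route the paper intends: the paper states the corollary as an ``immediate consequence'' of Proposition~\ref{prop:GLJord}, obtained by setting ${\sf D}=1$ (compare Example~\ref{ex:B(AJ)}(e)), which is precisely your specialization argument. Your extra care in checking that killing ${\sf D}-1$ commutes with passing to the $S^{2m}$-trivial quotients, and that the centrality relations become vacuous once ${\sf D}=1$, just fills in details the paper leaves implicit.
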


Now let us study the  quantum linear groups ${\mathcal O}^{c}_{A}(GL/S^\infty)$ and ${\mathcal O}_{A}(SL/S^\infty)$, for $A$ an AS regular algebra of double quantum type. Compare to Example~\ref{ex:B(Aq)}.

\begin{proposition}\label{prop:GLDq}
For any AS regular algebra of double quantum type $A_{D_q}=A(\DD_{2r}(q))$ with $q$ a non-root of unity, we have the statements below.
\begin{enumerate}
\item $\mc{O}^{c}_{A_{D_q}}(GL/S^{2m}) $  is generated by $a_0,a_1,\dots a_{r-1},$ $b_0,b_1,\dots, b_{r-1}$ and central group-like elements $\sf D^{\pm 1}$
subject to relations:
\[
\begin{array}{c}
a_0b_0~=~b_0a_0~=~\sf D\\
$$a_0b_{i-1}+a_1b_{i-2}+\cdots+a_{i-1}b_0~=~b_0a_{i-1}+b_1a_{i-2}+\cdots+b_{i-1}a_0~=~0
\quad \text{ ~~for all $2 \leq i \leq r$}
\end{array}
\]
with the coalgebra structure and antipode given by
\[
\begin{array}{lll}
\Delta(a_0)=a_0\otimes a_0, \quad &\Delta(a_{i-1})=a_0\otimes a_{i-1}+a_1\otimes a_{i-2}+\cdots+a_{i-1}\otimes a_0 \quad &\text{ for $2 \leq i \leq r$}\\ 
 \Delta(b_0)=b_0\otimes b_0, \quad &\Delta(b_{i-1})=b_0\otimes b_{i-1}+b_1\otimes b_{i-2}+\cdots+b_{i-1}\otimes b_0 \quad &\text{ for $2 \leq i \leq r$},
\end{array} 
\]
where $\ee(a_i)=\ee(b_i)=\delta_{i0}$ and $S(a_i)={\sf D^{-1}}b_i,~ S(b_i)={\sf D^{-1}}a_i$ for all $0\le i\le r-1$.
\item ${\mathcal O}^{c}_{A_{D_q}}(GL/S^\infty)\cong \mathcal{O}^{c}_{A_{D_q}}(GL/S^2)$ as Hopf algebras, and ${\mathcal O}^{c}_{A_{D_q}}(GL/S^\infty)$ is involutory.
\end{enumerate}
\end{proposition}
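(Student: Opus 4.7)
The plan is to follow the same strategy as in Proposition~\ref{prop:GLJord}, but with the lower-triangular $\LL$-analysis replaced by a block-diagonal analysis that uses both $\LL$ and $\U$. First I would invoke Corollary~\ref{C:SLGL}(c) to realize $\mc{O}^c_{A_{D_q}}(GL)$ as $\mc{G}(\E^{-1},\E)$ with $\E=\DD_{2r}(q)$, so the generators of interest are the entries of a $2r\times 2r$ matrix $\A$ together with a central ${\sf D}^{\pm 1}$ (central by Lemma~\ref{L:CentralGL}(a)). By Lemma~\ref{L:quantumGrp}(c) and centrality of ${\sf D}$, we have $S^{2m}(\A)=\M^m\A\M^{-m}$ with $\M=-\E(\E^{-1})^T$, so the Hopf ideal for the $S^{2m}$-trivial quotient is generated by the entries of $\M^m\A-\A\M^m$.

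Next I would compute $\M$ blockwise: writing $\E=\left(\begin{smallmatrix}0&\I_r\\\B_r(q)&0\end{smallmatrix}\right)$, a direct computation yields a block-diagonal
\[
\M=-\begin{pmatrix}(\B_r(q)^{-1})^T & 0\\ 0 & \B_r(q)\end{pmatrix},
\]
with $\B_r(q)^m$ a polynomial in $\U$ and $((\B_r(q)^{-1})^T)^m$ a polynomial in $\LL$, whose constant terms are $q^m$ and $q^{-m}$ respectively and whose $\U/\LL$ coefficients are nonzero. Partitioning $\A=\left(\begin{smallmatrix}\B_{11}&\B_{12}\\\B_{21}&\B_{22}\end{smallmatrix}\right)$ into $r\times r$ blocks, the identity $\M^m\A=\A\M^m$ splits into four independent block equations. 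Lemma~\ref{lem:matrix}(a,b) then forces $\B_{11}=\sum a_i\LL^i$ and $\B_{22}=\sum b_i\U^i$, while Lemma~\ref{lem:matrix}(c,d), applied with $a_0=q^{-m}\neq q^m=b_0$ (which is exactly where the hypothesis that $q$ is not a root of unity is used), forces $\B_{12}=\B_{21}=0$.

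With $\A$ reduced to this block-diagonal form, I would substitute into the $\mc{G}(\E^{-1},\E)$-relations $\A\E\A^T\E^{-1}={\sf D}\I=\E\A^T\E^{-1}\A$. A short blockwise computation, using that polynomials in $\U$ commute with $\B_r(q)$ and $\B_r(q)^{-1}$, reduces these to the two matrix identities $\B_{11}\B_{22}^T={\sf D}\I_r$ and $\B_{22}\B_{11}^T={\sf D}\I_r$; equating coefficients of $\LL^i$ and $\U^i$ (for $0\le i\le r-1$) yields precisely the stated scalar relations in $a_0,\dots,a_{r-1},b_0,\dots,b_{r-1}$. The coproduct formulas fall out of $\Delta(a_{ij})=\sum_s a_{is}\otimes a_{sj}$ together with the identifications $(\B_{11})_{ij}=a_{i-j}$ and $(\B_{22})_{ij}=b_{j-i}$ in the quotient, and the antipode formulas follow from $S(\A)=\E\A^T\E^{-1}{\sf D}^{-1}$ by the same block computation.

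For part (b), the presentation in (a) is manifestly independent of $m$, so all the quotients $\mc{O}^c_{A_{D_q}}(GL/S^{2m})$ coincide with $\mc{O}^c_{A_{D_q}}(GL/S^2)$, and the limit in Remark~\ref{rem:OASinfty}(2) stabilizes at $m=1$; the involutory claim is then automatic, since $S^2=\mathrm{id}$ holds by construction on $\mc{O}^c_{A_{D_q}}(GL/S^2)$. The main obstacle is really the bookkeeping in the second paragraph: verifying that the two off-diagonal block equations have no nontrivial solutions uniformly in $m$, which requires the non-root-of-unity hypothesis to guarantee $q^m\neq q^{-m}$ for every $m\ge 1$ so that Lemma~\ref{lem:matrix}(c,d) applies; everything else is a direct, if slightly tedious, matrix computation.
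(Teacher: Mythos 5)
Your proposal is correct and follows essentially the same route as the paper's proof: the same block decomposition $\E(\E^{-1})^T=\Diag\bigl((\B_r(q)^{-1})^T,\B_r(q)\bigr)$, the same application of Lemma~\ref{lem:matrix}(a)--(d) to kill the off-diagonal blocks (using $q^{m}\neq q^{-m}$) and put the diagonal blocks in $\LL$-/$\U$-polynomial form, the same reduction of the $\mc{G}(\E^{-1},\E)$-relations via commutation with $\B_r(q)$, and the same stabilization argument for part (b). No gaps.
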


\begin{proof}
(a) We apply the same process as in the proof of Proposition~\ref{prop:GLJord} for $\E= \DD_{2r}(q)$. We see that $\E(\E^{-1})^T$=$\Diag(\X,\Y)$, where $\X=q^{-1}\I-q^{-2}\LL$ and $\Y=q\I+\U$ are $r$-by-$r$ matrices, using Notation~\ref{not:e,g,h}. Write $\A=(\A_{ij})_{1 \leq i,j \leq 2}$, for $\A_{ij}$ being an $r$-by-$r$ matrix. Then, the subspace spanned by the entries of $S^{2m}(\mathbb A)-\mathbb A$ can be spanned by the entries of
{\small \begin{align*}
{\begin{pmatrix} \X&0\\ 0&\Y\end{pmatrix}}^m\mathbb A-\mathbb A{\begin{pmatrix} \X&0\\ 0&\Y\end{pmatrix}}^m=&\begin{pmatrix} \X^m&0\\ 0&\Y^m\end{pmatrix}\begin{pmatrix} \A_{11}&\A_{12}\\ \A_{21}&\A_{22}\end{pmatrix}-\begin{pmatrix} \A_{11}&\A_{12}\\ \A_{21}&\A_{22}\end{pmatrix}\begin{pmatrix} \X^m&0\\ 0&\Y^m\end{pmatrix}\\
=&\begin{pmatrix} \X^m\A_{11}-\A_{11}\X^m&\X^m\A_{12}-\A_{12}\Y^m\\ \Y^m\A_{21}-\A_{21}\X^m&\Y^m\A_{22}-\A_{22}\Y^m\end{pmatrix}.
\end{align*}}

\vspace{-.15in}

\noindent Note that $$\X^m=q^{-m}\I-mq^{-m-1}\LL+\cdots \quad \text{ and } \quad \Y^m=q^m\I+mq^{m-1}\U+\cdots.$$ Since $q^{2m} \neq 1$, we have that $q^{-m}\neq q^m$. Hence $\A_{12}/(\X^m\A_{12}-\A_{12}\Y^m)=0$ by Lemma \ref{lem:matrix}(c). Similarly, $\A_{21}/(\Y^m\A_{21}-\A_{21}\X^m)=0$ by Lemma \ref{lem:matrix}(d). Furthermore by applying Lemma \ref{lem:matrix}(a,b), we can choose representatives $a_0$, $a_1$, $\dots$, $a_{r-1}$, $b_0$, $b_1$, $\dots$, $b_{r-1}$ in the quotient space $\overline{\mathbb A}=\Diag(\overline{\A_{11}},\overline{\A_{22}})$ such that 
\[
\begin{array}{c}
\overline{\A}_{11}:=\A_{11}/( \X^m\A_{11}-\A_{11}\X^m) =a_0\LL^0+a_1\LL+\cdots+a_{r-1}\LL^{r-1}\\
 \overline{\A}_{22}:=\A_{22}/( \Y^m\A_{22}-\A_{22}\Y^m) =b_0\U^0+b_1\U+\cdots+b_{r-1}\U^{r-1}.
 \end{array}
 \]
Recall  $\E =\mathbb D_{2r}(q)=${\footnotesize$\begin{pmatrix} 0&\I_r 
\\\B_r(q)&0\end{pmatrix}$} as in Notation~\ref{not:matrices}. After passing to the quotient algebra, the relations of $\mathcal{O}^{c}_{A_{D_q}}(GL/S^{2m})$ are $\E{\overline{\mathbb A}}^T\E^{-1}\overline{\mathbb A}={\sf D}\I=\overline{\mathbb A}\E{\overline{\mathbb A}}^T\E^{-1}$.
Hence, we have 
$$ \overline{\A}_{11}\overline{\A}_{22}^T=\overline{\A}_{22}^T \overline{\A}_{11}={\sf D}\I \quad \text{ and } \quad \B_r(q) \overline{\A}_{11}^T\B_r(q)^{-1}\overline{\A}_{22}=\overline{\A}_{22}\B_r(q) \overline{\A}_{11}^T\B_r(q)^{-1}={\sf D}\I.$$ 
Since  $\overline{\A}_{11}^T\B_r(q)=\B_r(q)\overline{\A}_{11}^T$, the relations above are equivalent to $\overline{\A}_{22}\overline{\A}_{11}^T=\overline{\A}_{11}^T \overline{\A}_{22}={\sf D}\I$. Now the result follows by a direct computation. Namely, if $(a_{i,j})_{1 \leq i,j \leq 2r}$ are the generators for $\mc{O}^{c}_{A_{D_q}}(GL)$, then $a_{i,j}=a_{i-j}$ for $1\leq j \leq i \leq r$,  and $a_{i,j}=b_{j-i}$ for $r+1 \leq i \leq j \leq 2r$, and $a_{i,j} = 0$ otherwise.

(b) This follows from (a). Namely, we see that ${\mathcal O}^{c}_{A_{D_q}}(GL/S^{2m})$ stabilizes for $m\ge 1$.
\end{proof}

We get the following immediate consequence.

\begin{corollary}\label{Cor:SLDq}
For any AS regular algebra of double quantum type $A_{D_q}=A(\DD_{2r}(q))$ with $q$ a non-root of unity, we have that:
\begin{enumerate}
\item $\mc{O}_{A_{D_q}}(SL/S^{2m}) $  is generated by $a_0,a_1,\dots a_{r-1},$ $b_0,b_1,\dots, b_{r-1}$
subject to relations:
\[
\begin{array}{c}
a_0b_0~=~b_0a_0~=~1\\
$$a_0b_{i-1}+a_1b_{i-2}+\cdots+a_{i-1}b_0~=~b_0a_{i-1}+b_1a_{i-2}+\cdots+b_{i-1}a_0~=~0
\quad \text{ ~~for all $2 \leq i \leq r$}
\end{array}
\]
with the coalgebra structure and antipode given by
\[
\begin{array}{lll}
\Delta(a_0)=a_0\otimes a_0, \quad &\Delta(a_{i-1})=a_0\otimes a_{i-1}+a_1\otimes a_{i-2}+\cdots+a_{i-1}\otimes a_0 \quad &\text{ for $2 \leq i \leq r$}\\ 
 \Delta(b_0)=b_0\otimes b_0, \quad &\Delta(b_{i-1})=b_0\otimes b_{i-1}+b_1\otimes b_{i-2}+\cdots+b_{i-1}\otimes b_0 \quad &\text{ for $2 \leq i \leq r$},
\end{array} 
\]
where $\ee(a_i)=\ee(b_i)=\delta_{i0}$ and $S(a_i)=b_i, S(b_i)=a_i$ for all $0\le i\le r-1$;
\item ${\mathcal O}_{A_{D_q}}(SL/S^\infty)\cong \mathcal{O}_{A_{D_q}}(SL/S^2)$ as Hopf algebras, and ${\mathcal O}_{A_{D_q}}(SL/S^\infty)$ is involutory.
\qed
\end{enumerate}
\end{corollary}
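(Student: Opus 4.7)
The plan is to derive Corollary \ref{Cor:SLDq} as an immediate specialization of Proposition \ref{prop:GLDq}, by imposing the additional relation ${\sf D} = 1$. Since ${\sf D}$ is group-like and is being set to the unit, it is \emph{a fortiori} central, so the surjection $\mc{O}_{A_{D_q}}(GL) \twoheadrightarrow \mc{O}_{A_{D_q}}(SL)$ factors through $\mc{O}^c_{A_{D_q}}(GL)$; after further modding out by $\mc{L}_{2m}$ on both sides this yields a Hopf algebra surjection
\[
\mc{O}^c_{A_{D_q}}(GL/S^{2m}) ~\twoheadrightarrow~ \mc{O}_{A_{D_q}}(SL/S^{2m})
\]
whose kernel is the Hopf ideal generated by ${\sf D} - 1$. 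This reduces the proof of Corollary \ref{Cor:SLDq} to simplifying the presentation from Proposition \ref{prop:GLDq}(a) under ${\sf D} = 1$.

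For part (a), I would take the presentation of $\mc{O}^c_{A_{D_q}}(GL/S^{2m})$ and set ${\sf D} = 1$, then simplify. The generators ${\sf D}^{\pm 1}$ disappear; the relation $a_0 b_0 = b_0 a_0 = {\sf D}$ becomes $a_0 b_0 = b_0 a_0 = 1$; the other quadratic relations (with right-hand side $0$) are unchanged; the coproduct and counit formulas involve no occurrences of ${\sf D}$; and the antipode formulas $S(a_i) = {\sf D}^{-1} b_i$, $S(b_i) = {\sf D}^{-1} a_i$ collapse to $S(a_i) = b_i$, $S(b_i) = a_i$. This yields precisely the presentation claimed in Corollary \ref{Cor:SLDq}(a).

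For part (b), the decisive observation is that the resulting presentation is independent of $m$. Thus all transition maps $\mc{O}_{A_{D_q}}(SL/S^{2m'}) \twoheadrightarrow \mc{O}_{A_{D_q}}(SL/S^{2m})$ for $m \mid m'$ in the projective system are isomorphisms, and the projective limit collapses to $\mc{O}_{A_{D_q}}(SL/S^2)$. The involutory property is immediate from the resulting antipode: one has $S^2(a_i) = S(b_i) = a_i$ and $S^2(b_i) = S(a_i) = b_i$ for each $0 \le i \le r-1$.

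The only step requiring any attention is the factorization in the first paragraph, but this is a direct application of the universal property recorded in Lemma \ref{lem:OcA(GL)}; there is essentially no obstacle, which is why the corollary is stated as an ``immediate consequence.'' All the technical content—the matrix computation of $\E(\E^{-1})^T$, the commutant analysis carried out in Lemma \ref{lem:matrix}, and the choice of representatives in the quotient by $\mc{L}_{2m}$—has already been dispatched inside Proposition \ref{prop:GLDq}.
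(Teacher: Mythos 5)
Your proposal is correct and matches the paper's own (implicit) argument: the corollary is presented as an immediate consequence of Proposition~\ref{prop:GLDq}, obtained precisely by setting ${\sf D}=1$, exactly as you do. The small extra detail you supply --- that the quotient by $({\sf D}-1)$ factors through the central quotient, so $\mc{O}_{A_{D_q}}(SL/S^{2m})\cong \mc{O}^{c}_{A_{D_q}}(GL/S^{2m})/({\sf D}-1)$ --- is the right justification and is consistent with how the paper handles the analogous specializations elsewhere (e.g.\ Example~\ref{ex:B(Aq)}(e) and Corollary~\ref{Cor:SLJord}).
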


Now we provide an alternative description for the  quantum linear groups ${\mathcal O}^{c}_{A}(GL/S^\infty)$ and ${\mathcal O}_{A}(SL/S^\infty)$, for $A_{Jord}$, and for $A_{D_q}$ with $q$ a non-root of unity. First, consider the following terminology.

\begin{definition}[{\sf NSymm}($n$)] \cite[Section~3.1]{GKLLRT}
The {\it Hopf algebra of noncommutative symmetric functions on $n$ variables} {\sf NSymm}($n$) is the free algebra on $n$ generators $\kk\langle x_1,\dots,x_n\rangle$, with coalgebra structure  given by
$\Delta(x_i)=\sum_{j+k=i}x_j\otimes x_k$. Here, $x_0 = 1$.
Moreover, $\ee(x_i)=\delta_{i0}$, and the antipode $S$ is defined inductively: $S(x_1)=-x_1$ with $S(x_i)=-\sum_{k=1}^i x_kS(x_{i-k})$  for $2\le i\le n$. 
\end{definition}

We have that 
\begin{equation} \label{eq:Nsymm}
{\sf NSymm}(n) \cong U(\mc{L}\langle p_1, \dots, p_n \rangle),
\end{equation}
 where the latter is the universal enveloping algebra of the free Lie algebra generated by $p_1, \dots, p_n$, where the $p_i$s are defined inductively by $p_1=x_1$ and $ix_i = x_{i-1} p_1 + x_{i-2} p_2 + \dots + p_i$ for $2\le i\le n$.
 
\begin{corollary} \label{cor:JordInfty}
We have the following isomorphisms of Hopf algebras:
\[\mc{O}^{c}_{A_{Jord}}(GL/S^\infty)\cong {\sf NSymm}(n\text{-}1)\#\kk\mathbb{Z}
\quad \quad \text{and} \quad \quad \mc{O}_{A_{Jord}}(SL/S^\infty)\cong {\sf NSymm}(n\text{-}1) \#\kk[\mathbb Z/2\mathbb Z].\]
Here, the action of $\kk\mathbb{Z}$ (resp., of  $\kk[\mathbb Z/ 2\mathbb Z]$) on {\sf NSymm}$(n$-$1)$ is given inductively by $g(x_1)=x_1$ with $g(x_i)=\sum_{j=1}^i (-1)^{j+1}x_jg(x_{i-j})$, for $2\le i\le n-1$ for  the generator $g$ of $\mathbb Z$ (resp., of $\mathbb Z/2\mathbb Z$).
\end{corollary}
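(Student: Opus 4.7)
The plan is to make an explicit change of variables in the presentations from Proposition~\ref{prop:GLJord}(a) and Corollary~\ref{Cor:SLJord}(a), and then recognize the result directly as a smash product. Set
\[
x_i := a_i a_0^{-1} \quad \text{for } 1 \leq i \leq n-1, \qquad g := a_0,
\]
inside $\mc{O}^{c}_{A_{Jord}}(GL/S^\infty)$, and analogously in the SL quotient. The element $a_0$ is grouplike and invertible: in the GL case $a_0^{-1} = {\sf D}^{-1} a_0$, and $a_0$ has infinite order (since ${\sf D}$ is a free central grouplike), so $g$ generates $\kk\mathbb{Z}$; in the SL case $a_0^2 = 1$ forces $g$ to generate $\kk[\mathbb{Z}/2\mathbb{Z}]$. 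The candidate isomorphism will identify the generators of ${\sf NSymm}(n\text{-}1)$ with these $x_i$'s and the group generator with $a_0$.

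First, I would check that the coproducts, counits, and antipodes pulled back from the $a_i$ to the $x_i$ match those of ${\sf NSymm}(n\text{-}1)$. From $\Delta(a_i) = \sum_{j+k=i} a_j \otimes a_k$ and $\Delta(a_0^{-1}) = a_0^{-1} \otimes a_0^{-1}$, a direct computation yields $\Delta(x_i) = \sum_{j+k=i,\, j,k\geq 0} x_j \otimes x_k$ with the convention $x_0 := 1$; the counit $\ee(x_i) = \delta_{i,0}$ is immediate, and the inductive NSymm antipode formula follows by translating $S(a_i) = (-1)^i {\sf D}^{-1} a_i$ (or $(-1)^i a_i$ in the SL case). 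Next, I would translate the $n-1$ quadratic defining relations $\sum_{j+k=m} (-1)^j a_j a_k = 0$ via the substitution $a_i = x_i a_0$ (with $x_0 = 1$). Writing $g(y) := a_0 y a_0^{-1}$ and cancelling $a_0^{-2}$ on the right, each relation becomes
\[
\sum_{j+k=m,\, j,k\geq 0} (-1)^j x_j\, g(x_k) = 0.
\]
Separating the $j=0$ and $j=m$ summands and solving for $g(x_m)$ gives exactly $g(x_m) = \sum_{j=1}^m (-1)^{j+1} x_j\, g(x_{m-j})$, which is the $\mathbb{Z}$-action in the statement. As a side check, applying this identity twice shows $g^2$ acts trivially on each $x_i$, consistent with the centrality of ${\sf D} = a_0^2$ in the GL case and with $g^2 = 1$ in the SL case.

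These translations establish the map $\Phi:{\sf NSymm}(n\text{-}1) \# \kk\mathbb{Z} \to \mc{O}^{c}_{A_{Jord}}(GL/S^\infty)$, $x_i \# 1 \mapsto x_i$, $1 \# g^{\pm 1} \mapsto a_0^{\pm 1}$, as a well-defined Hopf algebra map: the inductive formula defines an algebra endomorphism of the free algebra ${\sf NSymm}(n\text{-}1)$; it is a Hopf automorphism because conjugation by the grouplike $a_0$ is a Hopf automorphism of the ambient algebra and visibly preserves the Hopf subalgebra generated by the $x_i$; and $\Phi$ is surjective since $a_i = x_i a_0 = \Phi(x_i \# g)$. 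The main obstacle, and last step, is injectivity---equivalently, that the $x_i$ \emph{freely} generate a subalgebra of $\mc{O}^{c}_{A_{Jord}}(GL/S^\infty)$. I would settle this by exhibiting the candidate inverse $\Psi:\mc{O}^{c}_{A_{Jord}}(GL/S^\infty) \to {\sf NSymm}(n\text{-}1) \# \kk\mathbb{Z}$ sending $a_0 \mapsto 1 \# g$, $a_i \mapsto x_i \# g$ for $i\geq 1$, and ${\sf D}^{\pm 1} \mapsto 1 \# g^{\pm 2}$: the $n-1$ defining quadratic relations, when expanded via the smash product rule, reduce to the identities $\sum_{j+k=m}(-1)^j x_j g(x_k)=0$, which hold tautologically by the definition of the $\mathbb{Z}$-action, and $\Psi\circ\Phi$, $\Phi\circ\Psi$ are identity on generators. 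The SL case is the same argument verbatim with $\kk\mathbb{Z}$ replaced by $\kk[\mathbb{Z}/2\mathbb{Z}]$, using $a_0^2=1$.
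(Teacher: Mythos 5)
Your proof is correct and takes essentially the same approach as the paper, whose entire argument is the one-line change of variables $g\mapsto a_0$, $x_i\mapsto a_ia_0^{-1}$ applied to the presentations in Proposition~\ref{prop:GLJord} and Corollary~\ref{Cor:SLJord}. Your write-up simply supplies the verifications (coproduct, translation of the quadratic relations into the action formula, and the mutually inverse maps) that the paper leaves implicit.
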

\begin{proof}
This follows from Proposition \ref{prop:GLJord} and Corollary \ref{Cor:SLJord} by using the isomorphism: $g\mapsto a_0$ and $x_i\mapsto a_ia_0^{-1}$ for $1\le i\le n-1$.
\end{proof}

Take $\mathbb Z$ copies of the noncommutative symmetric functions ${\sf NSymm}(n)^{[s]}$ on $n$ variables $x_1^{[s]},\dots, x_n^{[s]}$, indexed by $s\in \mathbb Z$. We consider the free product $${\sf FNSymm}(n):=\coprod_{s\in \mathbb{Z}}{\sf NSymm}(n)^{[s]}.$$ 
We use $\varphi$ to denote the Hopf automorphism of each summand ${\sf NSymm}(n)^{[s]}$ given inductively by 
$$\varphi(x_1^{[s]})=-x_1^{[s]} \quad \text{and} \quad \varphi(x_i^{[s]})=-\sum_{1\le j\le i} x_j^{[s]}\varphi(x_{i-j}^{[s]}), \quad \text{for } 2\le i \le n,$$ where $x_0^{[s]}=1$. It is easy to show that $\varphi$ is well-defined and $\varphi^2=\text{id}$, by induction. To see this, note that
$$\sum_{\substack{i+j=m\\ i,j>0}} x_i \varphi(x_j) = \sum_{\substack{i+j=m\\ i,j>0}} \varphi(x_i) x_j$$
\noindent for all $m \geq 2$.
We denote by $(g,h)$ the generators of $\mathbb Z\times \mathbb Z$. The $\mathbb Z\times \mathbb Z$-action on ${\sf FNSymm}(n)$ is given by 
\[
g^k(x_i^{[s]})=\hat{\varphi}^k(x_i^{[s+k]}) \quad \text{and} \quad  h^k(x_i^{[s]})=\check{\varphi}^k(x_i^{[s-k]}),\quad \text{for } s\in \mathbb Z,
\]
where $\hat{\varphi} = \check{\varphi} = \varphi$.

\begin{corollary} \label{cor:DqInfty}
Suppose that $q$ is not a root of unity. Then, we have the Hopf algebra isomorphisms below:
\[\mc{O}^{c}_{A_{D_q}}(GL/S^\infty)\cong {\sf FNSymm}(r\text{-}1)\#\kk[\mathbb Z\times \mathbb Z]\quad \text{and} \quad \mc{O}_{A_{D_q}}(SL/S^\infty)\cong {\sf FNSymm}(r\text{-}1)\#\kk\mathbb{Z},\]
where the $\mathbb Z$-action on ${\sf FNSymm}(r\text{-}1)$ in the latter isomorphism is induced by taking $gh=1$ in the $\mathbb Z\times \mathbb Z$-action on ${\sf FNSymm}(r\text{-}1)$.
\end{corollary}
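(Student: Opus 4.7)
The plan is to follow the template of Corollary~\ref{cor:JordInfty} by exhibiting explicit Hopf algebra maps between the two sides and verifying they are mutually inverse. For the first isomorphism, I would define
\[
\Psi : \mc{O}^{c}_{A_{D_q}}(GL/S^\infty) \longrightarrow {\sf FNSymm}(r\text{-}1) \# \kk[\mathbb{Z} \times \mathbb{Z}]
\]
on the presentation of Proposition~\ref{prop:GLDq}(a) by $a_0 \mapsto g$, $b_0 \mapsto h$, ${\sf D}^{\pm 1} \mapsto (gh)^{\pm 1}$, and $a_i \mapsto x_i^{[0]} g$, $b_i \mapsto x_i^{[-1]} h$ for $1 \leq i \leq r-1$. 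The key relation $\sum_{j=0}^{i-1} a_j b_{i-1-j} = 0$ then translates, via the smash-product rule $g\, x_j^{[-1]} = \varphi(x_j^{[0]})\, g$, into
\[
\bigl(\textstyle\sum_{j=0}^{i-1} x_j^{[0]}\, \varphi(x_{i-1-j}^{[0]})\bigr)\, gh \;=\; 0,
\]
which holds because $\varphi$ restricted to generators coincides with the antipode of ${\sf NSymm}(r\text{-}1)$. The companion relation $\sum_{j=0}^{i-1} b_j a_{i-1-j} = 0$ is handled symmetrically, and centrality of $gh$ follows from $gh \cdot x_i^{[s]} \cdot (gh)^{-1} = \varphi^2(x_i^{[s]}) = x_i^{[s]}$. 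Coalgebra compatibility is immediate from $\Delta(x_i^{[0]} g) = \sum_j (x_j^{[0]} g) \otimes (x_{i-j}^{[0]} g)$, matching $\Delta(a_i)$; antipode compatibility is then automatic for any bialgebra map between Hopf algebras.

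Next, I would construct the inverse $\Phi$ by setting $\Phi(g) = a_0$, $\Phi(h) = b_0$, $\Phi(x_i^{[0]}) = a_i a_0^{-1}$, and, for each $s \neq 0$, defining $\Phi$ on the summand ${\sf NSymm}(r\text{-}1)^{[s]}$ as the composite that first relabels $x_i^{[s]} \mapsto x_i^{[0]}$, then applies $\varphi^s$, then $\Phi|_{[0]}$, then conjugates by $a_0^s$ inside $\mc{O}^{c}_{A_{D_q}}(GL/S^\infty)$. The main obstacle will be verifying that $\Phi$ is simultaneously compatible with both smash-product relations $g x_i^{[s]} g^{-1} = \varphi(x_i^{[s+1]})$ and $h x_i^{[s]} h^{-1} = \varphi(x_i^{[s-1]})$; this consistency reduces to the identities $\varphi^2 = \mathrm{id}$ and centrality of ${\sf D} = a_0 b_0 = b_0 a_0$ in $\mc{O}^{c}_{A_{D_q}}(GL/S^\infty)$. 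Granted this, the equalities $\Phi \circ \Psi = \mathrm{id}$ and $\Psi \circ \Phi = \mathrm{id}$ follow by inspection on generators.

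The second isomorphism then follows from the first by passing to the Hopf quotient by ${\sf D} - 1$: comparing Proposition~\ref{prop:GLDq}(a) with Corollary~\ref{Cor:SLDq}(a) gives $\mc{O}^{c}_{A_{D_q}}(GL/S^\infty)/({\sf D}-1) \cong \mc{O}_{A_{D_q}}(SL/S^\infty)$. Under $\Psi$, the image of ${\sf D} - 1$ is $gh - 1$, so $\kk[\mathbb{Z} \times \mathbb{Z}]$ collapses to $\kk\mathbb{Z}$ generated by $g$ with $h = g^{-1}$, yielding $\mc{O}_{A_{D_q}}(SL/S^\infty) \cong {\sf FNSymm}(r\text{-}1) \# \kk\mathbb{Z}$. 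The induced $\mathbb{Z}$-action on ${\sf FNSymm}(r\text{-}1)$ is exactly the one obtained from the $\mathbb{Z} \times \mathbb{Z}$-action by setting $gh = 1$: the generator $g$ still acts by $g(x_i^{[s]}) = \varphi(x_i^{[s+1]})$, and $g^{-1}$ takes over what $h$ previously did, namely $g^{-1}(x_i^{[s]}) = \varphi(x_i^{[s-1]})$, consistently since $\varphi^{-1} = \varphi$.
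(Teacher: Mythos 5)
Your proposal is correct and takes essentially the same route as the paper's proof: an explicit Hopf algebra map $\Psi$ defined on the presentation of Proposition~\ref{prop:GLDq}(a), with inverse built from $\varphi$-twisted conjugates of $a_ia_0^{-1}$ and $b_ib_0^{-1}$, the well-definedness resting on the recursion $\sum_{j+k=m}x_j\varphi(x_k)=0$, on $\varphi^2=\mathrm{id}$, and on centrality of ${\sf D}$; your indexing ($a_i\mapsto x_i^{[0]}g$, $b_i\mapsto x_i^{[-1]}h$) differs from the paper's ($a_i\mapsto x_i^{[1]}g$, $b_i\mapsto x_i^{[0]}h$) only by a harmless shift of the copies of ${\sf NSymm}(r\text{-}1)$. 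Deducing the $SL$ statement by passing to the quotient by ${\sf D}-1$, rather than rerunning the construction against Corollary~\ref{Cor:SLDq} as the paper does, is a minor but valid streamlining.
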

\begin{proof}
We denote the generators of $\mc{O}^{c}_{A_{D_q}}(GL/S^\infty)$ by $a_0,\dots,a_{r-1},b_0,\dots,b_{r-1}$ and ${\sf D}^{\pm 1}$ subject to the relations in Proposition \ref{prop:GLDq}. Recall, in particular, that $a_0b_0=b_0a_0 = {\sf D}$. We define a map 
\begin{equation*} \label{eq:Psi}
\Psi: \mc{O}^{c}_{A_{D_q}}(GL/S^\infty)\to {\sf FNSymm}(r\text{-}1)\#\kk[\mathbb Z\times \mathbb Z]
\end{equation*} given by
\[
\Psi(a_0)=g,\quad \Psi(b_0)=h, \quad \Psi(a_i)=x_i^{[1]}g, \quad\Psi(b_i)=x_i^{[0]}h, \quad \mbox{for all}\ 1\le i\le r-1.
\]
It follows from a direct computation that $\Psi$ is a well-defined Hopf algebra map. Now $\Psi$ maps the Hopf subalgebra $A:=\kk\langle a_1a_0^{-1},\dots,a_{r-1}a_0^{-1}\rangle$ of $\mc{O}^{c}_{A_{D_q}}(GL/S^\infty)$ onto the $[1]$-th copy ${\sf NSymm}(r\text{-}1)^{[1]}$, and $B:=\kk \langle b_1b_0^{-1},\dots,b_{r-1}b_0^{-1}\rangle$ onto the $[0]$-th copy ${\sf NSymm}(r\text{-}1)^{[0]}$, in ${\sf FNSymm}(r\text{-}1)$. 

By an abuse of notation, we take $\hat{\varphi}(a_1a_0^{-1})=-a_1a_0^{-1}$, and $\hat{\varphi}(a_ia_0^{-1})=-\sum_{1\le j\le i} a_ja_0^{-1}\hat{\varphi}(a_{i-j}a_0^{-1})$ for $2\le i \le r-1$ inductively; we take the same for $B$ using $\check{\varphi}$. Note that $\varphi^2=\text{id}$ on $A,B$. Then, direct computation shows that $\Psi$ is bijective with inverse given by $\Psi^{-1}(g)=a_0,~~\Psi^{-1}(h)=b_0$, and for $1\le i\le r-1$, 
\begin{align*}
\Psi^{-1}(x_i^{[s]})=
\begin{cases}
b_0^{-s}\check{\varphi}^{s}(b_ib_0^{-1})b_0^s  & s\leq 0\\
a_0^{s-1}\hat{\varphi}^{1-s}(a_ia_0^{-1})a_0^{1-s}  & s>0.
\end{cases}
\end{align*}
\noindent Here, $a_0^{-1}=b_0{\sf D}^{-1}$ and $b_0^{-1}=a_0{\sf D}^{-1}$, and for $t \geq 0$ note that 
\begin{equation} \label{eq:Psi1}
\Psi^{-1}(\varphi^t(x_i^{[s]}))=
\begin{cases}
b_0^{-s}\check{\varphi}^{s+t}(b_ib_0^{-1})b_0^s  & s\leq 0\\
a_0^{s-1}\hat{\varphi}^{1-s+t}(a_ia_0^{-1})a_0^{1-s}  & s>0.
\end{cases}
\end{equation} 
The most difficult calculations are the following; the rest are routine or follow similarly to those below:
\[
\begin{array}{lllll}
\Psi^{-1}(g x_i^{[s]}) &= 
\begin{cases}
a_0 b_0^{-s} \check{\varphi}^s(b_ib_0^{-1})b_0^s & s \leq 0\\
a_0 a_0^{s-1} \hat{\varphi}^{1-s}(a_ia_0^{-1})a_0^{1-s} & s > 0\\
\end{cases} \\\\
&= 
\begin{cases}
a_0 b_0^{-s} \check{\varphi}^s(b_ib_0^{-1})b_0^s & s <-1\\
a_0 b_0 \check{\varphi}^{-1}(b_ib_0^{-1})b_0^{-1} & s = -1\\
a_0 b_ib_0^{-1} & s = 0\\
a_0 a_0^{s-1} \hat{\varphi}^{1-s}(a_ia_0^{-1})a_0^{1-s} & s > 0\\
\end{cases} 
&=
\begin{cases}
{\sf D} b_0^{-(s+1)} \check{\varphi}^s(b_ib_0^{-1})b_0^{s+1}a_0 {\sf D}^{-1} & s \leq -1\\
{\sf D} \check{\varphi}(b_ib_0^{-1})b_0^{-1} & s = -1\\
a_0 b_ib_0^{-1} (a_0^{-1}a_0) & s =0\\
a_0^{s} \hat{\varphi}^{1-s}(a_ia_0^{-1})a_0^{-s} a_0 & s > -1\\
\end{cases} \\\\
&=
\begin{cases}
b_0^{-(s+1)} \check{\varphi}^s(b_ib_0^{-1})b_0^{s+1}a_0  & s \leq -1\\
\check{\varphi}(b_ib_0^{-1}) {\sf D}  b_0^{-1} & s = -1\\
(a_0 (b_ib_0^{-1}) a_0^{-1}) a_0  & s =0\\
a_0^{s} \hat{\varphi}^{1-s}(a_ia_0^{-1})a_0^{-s} a_0  & s > -1\\
\end{cases} 
&=
\begin{cases}
b_0^{-(s+1)} \check{\varphi}^{s+2}(b_ib_0^{-1})  b_0^{s+1} a_0& s \leq -1\\
\check{\varphi}(b_ib_0^{-1})  a_0& s = -1\\
\hat{\varphi}(a_ia_0^{-1}) a_0 & s =0\\
a_0^{s} \hat{\varphi}^{1-s}(a_ia_0^{-1})a_0^{-s} a_0 & s > -1\\
\end{cases} \\\\
&=
\begin{cases}
b_0^{-(s+1)} \check{\varphi}^{s+2}(b_ib_0^{-1})  b_0^{s+1} a_0& s \leq -1\\
a_0^{s} \hat{\varphi}^{1-s}(a_ia_0^{-1})a_0^{-s} a_0 & s > -1\\
\end{cases} 
&= \Psi^{-1}\left(\hat{\varphi}(x_i^{[s+1]})g\right);
\end{array}
\]
\begin{align*}
\Psi(\Psi^{-1}(x_i^{[s]})) \hspace{.15in}&=
\begin{cases}
\Psi\left(b_0^{-s}\check{\varphi}^{s}(b_ib_0^{-1})b_0^s\right)  & s\leq 0\\
\Psi\left(a_0^{s-1}\hat{\varphi}^{1-s}(a_ia_0^{-1})a_0^{1-s}\right)  & s>0
\end{cases}
&=
\begin{cases}
h^{-s}\check{\varphi}^{s}(x_i^{[0]})h^s & s\leq 0\\
g^{s-1}\hat{\varphi}^{1-s}(x_i^{[1]})g^{1-s}  & s>0
\end{cases}
\hspace{.15in}&= x_i^{[s]}.
\end{align*}
 This establishes the first isomorphism; the argument for the second isomorphism is similar by using Corollary~\ref{Cor:SLDq}.
\end{proof}

In the following, we consider the Noetherian condition and Gelfand-Kirillov (GK) dimension for the quantum group $\mc{O}_{A(\E)}(SL/S^2)$, which is a Hopf quotient of both $\mc{O}_{A(\E)}(GL/S^2)$ and $\mc{O}^c_{A(\E)}(GL)$.

\begin{lemma}\label{L:NonNoeth}
Take $\E = \DD_{2r}(q)$  for $r \geq 2$, or $\J_n$ for $n \geq 3$. Then, we have that $\mc{O}_{A(\E)}(SL/S^2)$ is not Noetherian and has infinite GK dimension. 
\end{lemma}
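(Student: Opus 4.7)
The plan is to exploit the explicit descriptions of $\mc{O}_{A(\E)}(SL/S^2)$ from the preceding corollaries, together with a standard quotient-and-descent argument. First I would combine Corollaries~\ref{Cor:SLJord}(b) and~\ref{Cor:SLDq}(b) with Corollaries~\ref{cor:JordInfty} and~\ref{cor:DqInfty} to identify
\[
\mc{O}_{A_{Jord}}(SL/S^2) \cong {\sf NSymm}(n\text{-}1) \# \kk[\mathbb Z/2\mathbb Z] \quad \text{and} \quad \mc{O}_{A_{D_q}}(SL/S^2) \cong {\sf FNSymm}(r\text{-}1) \# \kk\mathbb Z,
\]
where the double quantum case requires $q$ to not be a root of unity, in keeping with the standing hypothesis of Corollary~\ref{Cor:SLDq}. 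Each of these is a skew group ring $R \rtimes G$ with $R$ the base algebra and $G$ the indicated group.

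Next I would produce an algebra surjection $R \twoheadrightarrow \kk\langle x, y\rangle$ onto the free associative algebra on two generators. In the Jordan case, by definition ${\sf NSymm}(n\text{-}1) = \kk\langle x_1, \dots, x_{n-1}\rangle$ as an algebra, so for $n \geq 3$ we send $x_1 \mapsto x$, $x_2 \mapsto y$, and $x_i \mapsto 0$ for $i \geq 3$. In the double quantum case, ${\sf FNSymm}(r\text{-}1)$ is the coproduct in the category of $\kk$-algebras of $\mathbb Z$ copies of ${\sf NSymm}(r\text{-}1)$; by the universal property we set $x_1^{[0]} \mapsto x$, $x_1^{[1]} \mapsto y$, and send every other generator of every factor to $0$, using $r \geq 2$ to ensure the existence of $x_1^{[0]}$ and $x_1^{[1]}$. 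Since the free algebra on two generators is classically non-Noetherian and has infinite GK dimension, both properties lift to $R$: non-Noetherianity by pulling back a strict ascending chain of left ideals along the surjection, and infinite GK dimension because the GK dimension of a quotient never exceeds that of the original.

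The final step is to push these two properties from $R$ up to the smash product $R \rtimes G$. For the GK dimension, $R$ sits inside $R \rtimes G$ as a subalgebra (the identity-group component), so $\mathrm{GKdim}(R \rtimes G) \geq \mathrm{GKdim}(R) = \infty$. For non-Noetherianity, given any strict ascending chain of left ideals $I_1 \subsetneq I_2 \subsetneq \cdots$ in $R$, set $J_n := (R \rtimes G) \cdot I_n$. Using the decomposition $R \rtimes G = \bigoplus_{g \in G} R \cdot g$ together with the skew-multiplication rule $(r \cdot g)(r' \cdot h) = r \alpha_g(r') \cdot gh$, where $\alpha$ denotes the $G$-action on $R$, a direct computation yields $J_n = \bigoplus_{g \in G} \alpha_g(I_n) \cdot g$, so $J_n \cap R = I_n$ and the chain $\{J_n\}$ is strict. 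The main subtlety I anticipate is verifying this descent uniformly for both $G = \mathbb Z/2\mathbb Z$ and $G = \mathbb Z$; however, the direct-sum decomposition and the computation above make no use of the finiteness of $G$, so the two cases are treated simultaneously.
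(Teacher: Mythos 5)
Your argument is correct for the Jordan case, and the descent of non-Noetherianity and infinite GK dimension through the surjection onto $\kk\langle x,y\rangle$ and then up to the skew group ring is carefully done (indeed more explicitly than in the paper). But there is a genuine gap in the double quantum case: the lemma is stated for every $q$ with $A(\DD_{2r}(q))$ AS regular, i.e.\ $q\neq 0,(-1)^{r+1}$, including roots of unity, and it is invoked in that generality in the proofs of Lemma~\ref{lem:Freeproduct} and Theorem~\ref{thm:ring}. Your first step, the identification $\mc{O}_{A_{D_q}}(SL/S^2)\cong {\sf FNSymm}(r\text{-}1)\#\kk\mathbb{Z}$, is only available under the non-root-of-unity hypothesis of Corollary~\ref{Cor:SLDq}, which you explicitly import, so the root-of-unity case is simply not covered. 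Moreover the identification genuinely fails for, say, $q=(-1)^r$: there $q^{-1}=q$, the conjugating matrix $\Diag(\X,\Y)$ from the proof of Proposition~\ref{prop:GLDq} has equal diagonal scalars in its two blocks, Lemma~\ref{lem:matrix}(c,d) no longer forces the off-diagonal blocks $\A_{12},\A_{21}$ to vanish, and $\mc{O}_{A_{D_q}}(SL/S^2)$ is not the stated smash product (compare Example~\ref{ex:B(Aq)}(c) for $r=1$).

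The paper sidesteps this by producing only a \emph{surjection} rather than an isomorphism. One imposes the block-Toeplitz form $\overline{\A}=\Diag(\overline{\A}_{11},\overline{\A}_{22})$ on the generating matrix of $\mc{O}_{A_{D_q}}(SL)\cong\mc{B}(\E^{-1})$ to obtain a quotient Hopf algebra presented as in Corollary~\ref{Cor:SLDq}(a); this quotient is visibly cocommutative, hence involutory, so the quotient map factors through $\mc{O}_{A_{D_q}}(SL/S^2)$ for \emph{every} admissible $q$, with no case distinction. That quotient surjects onto ${\sf FNSymm}(r\text{-}1)\#\kk\mathbb{Z}$ as in Corollary~\ref{cor:DqInfty}, and since non-Noetherianity and infinite GK dimension pull back along algebra surjections, your final two paragraphs then apply unchanged. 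So the fix is to replace your opening isomorphism with this quotient construction (or at minimum to treat $q^2=1$ separately); the remainder of your proof is sound.
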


\begin{proof}
First, consider the case $\E=\DD_{2r}(q)$. As in the proof of Proposition \ref{prop:GLDq}, take the quotient space 
$
\overline{\mathbb A}={\footnotesize\begin{pmatrix} \overline{\A}_{11}&0 \\0& \overline{\A}_{22} \end{pmatrix}} 
$
of the generating space $\A$ of $\mc{O}_{A_{D_q}}(SL)\cong \mc{B}(\E^{-1})$,
where 
\begin{align*}
\overline{\A}_{11}:=a_0\LL^0+a_1\LL+\cdots+a_{r-1}\LL^{r-1}\quad \text{ and } \quad \overline{\A}_{22}:=b_0\U^0+b_1\U+\cdots+b_{r-1}\U^{r-1}.
\end{align*}
The corresponding quotient Hopf algebra of $\mc{B}(\E^{-1})$ is generated by 
$a_0, \dots, a_{r-1}, b_0, \dots, b_{r-1}$, 
subject to the relations described in Corollary \ref{Cor:SLDq}. It is clear that this quotient Hopf algebra of $\mc{O}_{A_{D_q}}(SL)$ indeed factors through $\mc{O}_{A_{D_q}}(SL/S^2)$ since it is cocommutative and hence involutory. Then, we get a Hopf algebra surjection from $\mc{O}_{A_{D_q}}(SL/S^2)$ to ${\sf FNSymm}(r\text{-}1)\#\kk\mathbb{Z}$ via an argument similar to the proof of Corollary~\ref{cor:DqInfty}. Since $r\geq 2$, we see that ${\sf FNSymm}(r\text{-}1)$ is a free algebra on an infinite number of generators, hence it is not Noetherian and has infinite GK dimension. Now the skew group algebra  ${\sf FNSymm}(r\text{-}1)\#\kk\mathbb{Z}$ is not Noetherian since its coefficient ring is not Noetherian.
Moreover, ${\sf FNSymm}(r\text{-}1)\#\kk\mathbb{Z}$ has infinite GK dimension since it contains the subalgebra ${\sf FNSymm}(r\text{-}1)$. 

The case where $\E = \J_n$ follows in the same fashion by using  Corollary \ref{cor:JordInfty}.
\end{proof}

Now we consider the Noetherian property and GK dimension of a free product of quantum special linear groups. Let $A=\kk\langle x_1,\dots,x_n\rangle/(\mathcal I)$ and $B=\kk\langle y_1,\dots,y_m\rangle/(\mathcal J)$ be two finitely presented algebras. Recall that their free product $A\ast B$ ($=A\coprod B$) can be presented as $\kk\langle x_1,\dots,x_n,y_1,\dots,y_m\rangle/(\mathcal I,\mathcal J)$. 

\begin{lemma}\label{lem:Freeproduct} 
Let  $\mathcal{O}_{A_i}(SL/S^2)$ be either the quantum $S^2$-trivial special linear group associated  to $A_i = A(\E_i)$ for $\E_i \in GL_{n_i}(\kk)$, or $\kk[\mathbb{Z}/2\mathbb{Z}]$ associated to $A_i=A(\J_1)$. If $n_1 + \cdots + n_\ell\geq 3$,  then the free product of quantum groups $\coprod_{i=1}^\ell \mathcal{O}_{A_i}(SL/S^2)$ is not Noetherian and has infinite Gelfand-Kirillov dimension.
\end{lemma}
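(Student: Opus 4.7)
The strategy is to exhibit enough ``freeness'' in the free product $\coprod_{i=1}^\ell \mc{O}_{A_i}(SL/S^2)$ to force both infinite Gelfand-Kirillov dimension and non-Noetherianity. Two main tools are used: (i) the normal form for algebra coproducts over $\kk$, which yields exponential growth via alternating words, and (ii) explicit Hopf algebra surjections onto group algebras of free products of groups containing a non-abelian free subgroup. The case $\ell = 1$ reduces to $n_1 \geq 3$ and follows from Lemma~\ref{L:NonNoeth}, after using Lemma~\ref{lem:AS(E)} to decompose $\E_1$ into canonical $\J_n$ and $\DD_{2r}(q)$ blocks. For $\ell \geq 2$ with $\sum n_i \geq 3$, I would handle the two conclusions separately.

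For infinite GK dimension: first select a nonzero element $v_i$ from each factor's augmentation ideal $\mc{O}_{A_i}(SL/S^2)^+$. This is possible since each factor has dimension at least $2$ (equal to $\kk[\mathbb{Z}/2\mathbb{Z}]$ when $n_i = 1$, and infinite-dimensional with the group-likes $a_0$ in its augmentation ideal for $n_i \geq 2$, per Corollaries~\ref{Cor:SLJord} and~\ref{Cor:SLDq}). By the normal form for algebra coproducts, alternating products $v_{i_1} v_{i_2} \cdots v_{i_m}$ with $i_j \neq i_{j+1}$ are linearly independent in the free product. When $\ell \geq 3$, this produces $\geq 3 \cdot 2^{n-1}$ linearly independent words of length $n$ in any three distinct $v_i$'s. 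When $\ell = 2$ (WLOG with $n_2 \geq 2$), the augmentation ideal $\mc{O}_{A_2}(SL/S^2)^+$ has dimension $\geq 2$, so picking two linearly independent elements there and alternating them with a nonzero $v_1$ yields exponential growth of the subalgebra generated. In either case, infinite GK dimension follows.

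For non-Noetherianity: each $\mc{O}_{A_i}(SL/S^2)$ admits a Hopf algebra surjection onto $\kk[G_i]$ for some $G_i \in \{\mathbb{Z}/2\mathbb{Z}, \mathbb{Z}\}$, obtained by sending the non-group-like generators to zero and keeping the canonical group-likes (as in Corollaries~\ref{Cor:SLJord} and~\ref{Cor:SLDq}). By the universal property of the Hopf algebra coproduct, these assemble into a surjection $\coprod_i \mc{O}_{A_i}(SL/S^2) \twoheadrightarrow \kk[G_1 * \cdots * G_\ell]$. Under $\sum n_i \geq 3$, one can arrange the $G_i$'s so that $G_1 * \cdots * G_\ell$ contains a non-abelian free subgroup $F_2$: e.g., $(\mathbb{Z}/2\mathbb{Z})^{*\ell}$ contains $F_2$ for $\ell \geq 3$ (generated by products $g_1 g_2$ and $g_1 g_3$), and $\mathbb{Z}/2\mathbb{Z} * \mathbb{Z}$ contains $F_2$ whenever some factor surjects onto $\kk[\mathbb{Z}]$ (generated by $a$ and $gag$). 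Since $\kk[G_1 * \cdots * G_\ell]$ is free, hence faithfully flat, as a right $\kk[F_2]$-module, non-Noetherianity of $\kk[F_2]$ forces non-Noetherianity of $\kk[G_1 * \cdots * G_\ell]$, which in turn forces non-Noetherianity of the free product via the surjection.

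\textbf{Main obstacle.} The most delicate subcase is $\ell = 2$ in which \emph{both} factors admit only a surjection onto $\kk[\mathbb{Z}/2\mathbb{Z}]$ (for instance, both $A_i$ of Jordan type with $n_i = 2$): then $\kk[\mathbb{Z}/2\mathbb{Z} * \mathbb{Z}/2\mathbb{Z}] = \kk[D_\infty]$ is Noetherian, and the group-algebra approach fails to detect non-Noetherianity. To handle this, I would directly construct a non-stabilizing ascending chain of left ideals in the free product, leveraging the exponential growth of alternating words (established in the GK-dimension step) together with the full infinite-dimensional structure of each $\mc{O}_{A_i}(SL/S^2)^+$ beyond its group-like part.
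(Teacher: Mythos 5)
Your GK-dimension argument is sound and in some ways more uniform than the paper's: the coproduct normal form does give exponentially many linearly independent alternating words once $\ell\ge 2$ and at least one augmentation ideal has dimension $\ge 2$, whereas the paper instead reduces to six explicit minimal configurations of canonical blocks ($\J_1,\J_2,\DD_2(q)$ with total size $3$) and exhibits free subalgebras on two generators inside small Hopf quotients such as $\kk\mathbb{Z}\ast\kk\mathbb{Z}$, $\kk[x]\ast\kk[x]$, and $\kk[\mathbb{Z}/2\mathbb{Z}]^{\ast 3}$. Note, though, that both for $\ell=1$ with decomposable $\E_1$ and for your block-by-block analysis you need the surjection of $\mc{B}(\E^{-1})$ onto the free product of the $\mc{B}$'s of the canonical blocks of $\E$ (this appears in the proof of Theorem~\ref{thm:ring}); it should be stated, since Lemma~\ref{L:NonNoeth} by itself only covers a single block $\J_n$ ($n\ge 3$) or $\DD_{2r}(q)$ ($r\ge 2$), not, say, $\E_1=\J_1^{\oplus 3}$. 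Your group-algebra route to non-Noetherianity is correct whenever $G_1\ast\cdots\ast G_\ell$ contains a free subgroup of rank $2$.

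The genuine gap is exactly the subcase you flag, and your proposed patch does not close it. After reducing to canonical blocks, the only factors whose group-like Hopf quotient is merely $\kk[\mathbb{Z}/2\mathbb{Z}]$ are the Jordan ones, so the problematic configurations are $\{\J_1,\J_2\}$ and $\{\J_2,\J_2\}$, where the group quotient is $\kk[D_\infty]$, which is Noetherian. Your fallback --- ``directly construct a non-stabilizing ascending chain\dots leveraging the exponential growth of alternating words'' --- is not an argument: exponential growth does not imply failure of the Noetherian property (no such implication is available), and no chain is actually produced; verifying strict ascent of a chain inside the full free product, rather than in a manageable quotient, is precisely the hard part. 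The missing idea, which is how the paper proceeds, is to use more of the Jordan factor than its group-likes: $\mc{O}_{A(\J_2)}(SL/S^2)$ has the commutative Hopf quotient $\kk[\mathbb{Z}/2\mathbb{Z}]\otimes\kk[x]$ and hence $\kk[x]$ with $x$ primitive, so the two bad configurations surject onto $\kk[x]\ast\kk[\mathbb{Z}/2\mathbb{Z}]\cong\kk\langle x,y\rangle/(y^2-1)$ and $\kk[x]\ast\kk[x]\cong\kk\langle x,y\rangle$, in which explicit non-stabilizing chains exist (namely $(yxy+x,\dots,yx^jy+x^j)$ and $(xyx,\dots,xy^jx)$, respectively). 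With that replacement your proof goes through; as written it is incomplete.
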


\begin{proof}
Recall from Remark~\ref{rem:B(J1)} that $\mathcal{O}_{A(\J_1)}(SL) \cong \kk[\mathbb{Z}/2\mathbb{Z}]$, as Hopf algebras; it is easy to see that in this case $\mathcal{O}_{A(\J_1)}(SL) = \mathcal{O}_{A(\J_1)}(SL/S^2)$.
Note for arbitrary Hopf algebras $H$ and $H'$, we get two surjective algebra maps from the free product $H\ast H'$ to $H$ and $H'$, respectively, by using the counits of $H'$ and $H$. Therefore, if $H\ast H'$ is Noetherian and has finite GK dimension, then so does  $H$ and $H'$. Thus, by Lemma \ref{L:NonNoeth} and Corollary~\ref{C:SLGL}, it suffices to show that the $S^2$-trivial quotient Hopf algebras of the following free products of quantum groups are not Noetherian and have infinite GK dimension: 
\begin{center}
\begin{tabular}{rlllllrl}
(I)& $\mc{B}(\DD_2(q)^{-1}) \ast \mc{B}(\DD_2(p)^{-1})$; &&&&&
(IV)& $\mc{B}(\J_2^{-1}) \ast \mc{B}(\J_1^{-1})$;\\
(II)& $\mc{B}(\DD_2(q)^{-1}) \ast \mc{B}(\J_1^{-1})$; &&&&&
(V)& $\mc{B}(\J_2^{-1}) \ast \mc{B}(\J_2^{-1})$; \\
(III)& $\mc{B}(\DD_2(q)^{-1}) \ast \mc{B}(\J_2^{-1})$; &&&&&
(VI)& $\mc{B}(\J_1^{-1}) \ast \mc{B}(\J_1^{-1}) \ast \mc{B}(\J_1^{-1})$. \\
\end{tabular}
\end{center}
\noindent By Example~\ref{ex:B(Aq)}, we have that $\mc{B}(\DD_2(q)^{-1})$ has an $S^2$-trivial quotient Hopf algebra $\kk\mathbb{Z}$  (take $b = c= 0$). By Example~\ref{ex:B(AJ)}, we have that $\mc{B}(\J_2^{-1})$ has an $S^2$-trivial quotient Hopf algebra $\kk[\mathbb Z/2\mathbb Z] \otimes \kk[x]$ (take $a= d$, $b= 0$, $x:=c$), which in turn, has an $S^2$-trivial quotient Hopf algebra $\kk[x]$ (take, further, $a=d=1$). Hence, by taking $S^2$-trivial quotient  Hopf algebras of (I)-(VI) above, it suffices to check that the following Hopf algebras are not Noetherian and have infinite GK dimension:
\begin{center}
\begin{tabular}{rll}
(i)& $\kk\mathbb{Z}\ast \kk\mathbb{Z}$ & $\cong \kk\langle x, x^{-1}, y, y^{-1}\rangle /(x x^{-1} -1, ~x^{-1} x -1, ~y y^{-1} -1, ~ y^{-1} y -1)$;\\
(ii)& $\kk\mathbb{Z}\ast\kk[\mathbb Z/2\mathbb Z]$ & $\cong \kk\langle x, x^{-1}, y\rangle /(x x^{-1} -1, ~x^{-1} x -1, ~y^2 -1)$;\\
(iii)& $\kk[x] \ast \kk\mathbb{Z}$ & $\cong \kk\langle x, y, y^{-1}\rangle /(y y^{-1} -1, ~ y^{-1} y -1)$;\\
(iv)& $\kk[x]\ast \kk[\mathbb Z/2\mathbb Z]$ & $\cong \kk\langle x, y \rangle /(y^2 -1)$;\\
(v)& $\kk[x]\ast \kk[y]$ & $\cong \kk\langle x, y \rangle$;\\
(vi)& $\kk[\mathbb Z/2\mathbb Z]\ast \kk[\mathbb Z/2\mathbb Z] \ast\kk[\mathbb Z/2\mathbb Z]$ & $\cong \kk\langle x, y, z\rangle /(x^2-1,~y^2 -1,~z^2-1)$.\\
\end{tabular}
\end{center}
To this end, consider the ascending chain of ideals $I_1 \subsetneq I_2 \subsetneq \dots \subsetneq I_j \subsetneq \dots$, in the respective cases:
\[
\begin{array}{llllll}
\text{for (i)}& I_j =(yxy+x, & yx^2y+x^2,& \dots,& yx^jy+x^j);\\
\text{for (ii)}& I_j =(yxy+x, & yx^2y+x^2,& \dots,& yx^jy+x^j);\\
\text{for (iii)}& I_j =(xyx, & xy^2x,& \dots,& xy^jx);\\
\text{for (iv)}& I_j =(yxy+x, & yx^2y+x^2,& \dots,& yx^jy+x^j);\\
\text{for (v)}& I_j =(xyx, & xy^2x,& \dots,& xy^jx);\\
\text{for (vi)}& I_j =(xyzx+yz, & x(yz)^2x+(yz)^2, & \dots, & x(yz)^jx+(yz)^j).\\
\end{array}
\]
Thus, the Hopf algebras in (I)-(VI) above are not Noetherian, as desired.

Moreover, the Hopf algebras (i)-(vi) all contain a free algebra on two variables: $\kk\langle x,y\rangle$ for (i), (iii), (v); $\kk\langle x,xy\rangle$ for (ii), (iv); and $\kk\langle xy,xz\rangle $ for (vi). Thus, the Hopf algebras in (I)-(VI) above have infinite GK dimension, as desired.
\end{proof}

\begin{theorem} \label{thm:ring}
Recall the Hopf algebra isomorphisms provided in Corollary~\ref{C:SLGL}. We have that the following statements are equivalent for any AS regular algebra $A(n,\E)$ of global dimension 2:
\begin{enumerate}
\item $n=2$.
\item $\mc{O}^{c}_{A(n,\E)}(GL) \cong \mc{G}(\E^{-1},\E)$, and $\mc{O}_{A(n,\E)}(SL) \cong\mc{B}(\E^{-1})$ are Noetherian;
\item $\mc{O}^{c}_{A(n,\E)}(GL) \cong \mc{G}(\E^{-1},\E)$, and $\mc{O}_{A(n,\E)}(SL)\cong\mc{B}(\E^{-1})$ have finite Gelfand-Kirillov dimension;
\item $\mc{O}_{A(n,\E)}(GL/S^2) \cong \mc{G}(\E^{-1},\E^T)$, and $\mc{O}_{A(n,\E)}(SL/S^2)$ are Noetherian;
\item $\mc{O}_{A(n,\E)}(GL/S^2) \cong  \mc{G}(\E^{-1},\E^T)$, and $\mc{O}_{A(n,\E)}(SL/S^2)$ have finite Gelfand-Kirillov dimension.
\end{enumerate}
\end{theorem}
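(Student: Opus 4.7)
The plan is to prove (a) $\Rightarrow$ (b),(c),(d),(e) directly and, by contrapositive, to prove that $n \geq 3$ forces the failure of each of (b),(c),(d),(e). For the forward direction, when $n=2$, I rely on the explicit presentations of $\mc{O}^{c}_{A(2,\E)}(GL)$ and $\mc{O}_{A(2,\E)}(GL/S^2)$ computed in Examples~\ref{ex:B(Aq)}(b,f) and~\ref{ex:B(AJ)}(b,f), which cover the quantum-plane and Jordan-plane isomorphism types from Corollary~\ref{cor:AS}(a). In each case the algebra is presented as an iterated Ore extension on the four matrix generators $a,b,c,d$ localized at the normal group-like element ${\sf D}$, hence is Noetherian with finite Gelfand-Kirillov dimension. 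Both properties pass to the Hopf quotients by ${\sf D}-1$, which are $\mc{O}_{A(2,\E)}(SL)$ and $\mc{O}_{A(2,\E)}(SL/S^2)$. The remaining symmetric type $A_{Sym}$ (for $n=2$, the $\J_1\oplus\J_1$ case) is handled analogously, as it arises as a degenerate case in the $A_{D_q}$ family.

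For the converse, observe that each of $\mc{O}^{c}_{A(n,\E)}(GL)$, $\mc{O}_{A(n,\E)}(SL)$, and $\mc{O}_{A(n,\E)}(GL/S^2)$ admits a canonical Hopf surjection onto $\mc{O}_{A(n,\E)}(SL/S^2)$, so it suffices to prove that $\mc{O}_{A(n,\E)}(SL/S^2)$ is non-Noetherian with infinite GK dimension whenever $n \geq 3$. By Lemmas~\ref{lem:isom} and~\ref{lem:AS(E)} I may assume that $\E = \E_1 \oplus \cdots \oplus \E_\ell$ is in canonical form, with each $\E_i$ equal to $\J_{k_i}$ or $\DD_{2r_i}(q_i)$. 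If $\ell = 1$, then $\E$ is a single block of size $n \geq 3$, so either $\E = \J_n$ with $n \geq 3$ or $\E = \DD_{2r}(q)$ with $r \geq 2$, and Lemma~\ref{L:NonNoeth} applies directly. If $\ell \geq 2$, the key step is to construct a surjective Hopf algebra map
\[
\mc{O}_{A(\E)}(SL/S^2) ~\twoheadrightarrow~ \coprod_{i=1}^{\ell} \mc{O}_{A(\E_i)}(SL/S^2)
\]
and then invoke Lemma~\ref{lem:Freeproduct}; when some $\E_i = \J_1$, the convention $\mc{B}(\J_1) \cong \kk[\mathbb Z/2\mathbb Z]$ from Remark~\ref{rem:B(J1)} matches the allowed factor in that lemma.

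To build the map for $\ell \geq 2$, I would let each factor $\mc{O}_{A(\E_i)}(SL/S^2)$ coact on the span of the generators of its own block of $A(\E)$ and trivially on the other blocks; this preserves each block relation $r_i$, and hence also the total relation $r = r_1 + \cdots + r_\ell$ of $A(\E)$. Since every factor is involutory with trivial homological codeterminant, the combined free-product coaction inherits both properties, and the universal property of $\mc{O}_{A(\E)}(SL/S^2)$ (the involutory, ${\sf D}=1$ analogue of Lemma~\ref{lem:OA(SL)}) produces the Hopf map. Surjectivity follows because the images of the matrix entries $a_{ij}$ recover the generators of each factor via the diagonal block structure of the combined coaction matrix. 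The main obstacle I anticipate lies precisely in this construction: verifying that the free-product coaction is well defined and that the resulting universal map is surjective, since $A(\E)$ is not literally a free product of the $A(\E_i)$ (the single relation $r = \sum r_i$ does not split into $\ell$ separate relations, so the coaction must be built on $A(\E)$ directly rather than transported from a free product of block algebras).
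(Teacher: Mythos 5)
Your proposal is correct and follows the paper's proof in all essentials: the $n=2$ direction via the iterated-Ore-extension presentations of Examples~\ref{ex:B(Aq)} and~\ref{ex:B(AJ)} (with the symmetric type absorbed into the $\DD_2(q)$ family), and the converse by passing to the common Hopf quotient $\mc{O}_{A(\E)}(SL/S^2)$, decomposing $\E$ into canonical blocks, surjecting onto $\coprod_{i}\mc{O}_{A(\E_i)}(SL/S^2)$, and applying Lemmas~\ref{L:NonNoeth} and~\ref{lem:Freeproduct}. The only (harmless) variation is that you construct the surjection onto the free product via a block-diagonal coaction and the universal property, whereas the paper obtains the same map by killing the off-diagonal blocks of the generating matrix $\A$ of $\mc{B}(\E^{-1})$ and observing that the defining relations then split block-by-block; the obstacle you flag is resolved exactly by the computation $\rho(r)=\sum_i \rho(r_i)=\sum_i r_i\otimes 1=r\otimes 1$, which is all that Lemma~\ref{lem:quadraticfactor} requires.
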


\begin{proof}
(a) $\Rightarrow$ (b,c). By Corollary \ref{cor:AS}, we can assume that $\E=\DD_2(q)$ or $\J_2$. When $\E = \DD_2(q)$, we have that $\mc{O}^{c}_{A(\E)}(GL)$ is Noetherian and has finite GK dimension by \cite[Theorems~I.1.17 and~I.1.19]{book:BrownGoodearl} and \cite[Proposition~8.2.13]{MR}. (See Example~\ref{ex:B(Aq)}.) When $\E = \J_2$, we see by Example~\ref{ex:B(AJ)} that $\mc{O}^{c}_{A(\J_2)}(GL) \cong R[{\sf D}^{-1}]$, where $R=\kk[b][a;\sigma_1,\delta_1][d;\sigma_2,\delta_2][c;\sigma_3,\delta_3]$ is an iterated Ore extension of $\kk$.  Here,
\[
\begin{array}{c}
\sigma_1(b) = b, \quad \delta_1(b) = b^2, \quad \sigma_2(b) = b, \quad \sigma_2(a) = a-b, \quad \delta_2(b) = b^2, \quad \delta_2(a) = ab+b^2\\
\sigma_3(b) = b, \hspace{.12in} \sigma_3(a) = a+b, \hspace{.12in} \sigma_3(d) = b+d, \hspace{.12in}
\delta_3(b) = ba+db, \hspace{.12in} \delta_3(a)=a^2+ba-da, \hspace{.12in} \delta_3(d) = ba-da+d^2.
\end{array}
\]
So, $\mc{O}^{c}_{A(\J_2)}(GL)$ is Noetherian and has finite GK dimension. Then, $\mc{O}_{A(2,\E)}(SL)$, which is a factor of $\mc{O}^{c}_{A(2,\E))}(GL)$ by the central element ${\sf D}-1$, is Noetherian and has finite GK dimension.

(a) $\Rightarrow$ (d,e). Likewise, it is routine to see by parts (f) of Examples~\ref{ex:B(Aq)} and~\ref{ex:B(AJ)} that both  $\mc{O}_{A(\DD_2(q))}(GL/S^2)$ and $\mc{O}_{A(\J_2)}(GL/S^2)$ are isomorphic to $R[{\sf D}^{-1}]$, where $R=\kk[b][a;\sigma_1,\delta_1][d;\sigma_2,\delta_2][c;\sigma_3,\delta_3]$ is an iterated Ore extension of $\kk$. Here, ${\sf D}$ is a regular normal element of $R$. Thus, $\mc{O}_{A(\DD_2(q))}(GL/S^2)$ and $\mc{O}_{A(\J_2)}(GL/S^2)$ are Noetherian and have finite GK dimension.

(b,c) $\Rightarrow$ (a). Now take $n\ge 3$. By Lemma \ref{lem:isom} and Lemma \ref{lem:congruent}, we can assume that $\E=\E_1 \oplus \E_2 \oplus \cdots \oplus \E_\ell$, where  $\E_i$ = $\J_{n_i}$ or $\DD_{2r_i}(q_i)$, for $i = 1, \dots, \ell$. So, we have that $\mc{O}_{A(n,\E)}(SL)\cong \mc{B}(\E_1^{-1} \oplus \E_2^{-1} \oplus \cdots \oplus \E_\ell^{-1})$ by Corollary~\ref{C:SLGL}. In the generating space $\A$ of $\mc{B}(\E^{-1})$, we take the quotient space $\overline{\A}=\Diag(\overline{\A_{11}},\overline{\A_{22}},\cdots,\overline{\A_{\ell\ell}})$, so that $\overline{\A_{ii}}$ generates $\mc{B}(\E_i^{-1})$. Hence, there is a surjection from $\mc{B}(\E^{-1})$ onto the free product of Hopf algebras generated by $\overline{\A}$:
$$\overline{\mc{B}}:=\mc{B}(\E_1^{-1}) \ast \mc{B}(\E_2^{-1}) \ast \cdots \ast \mc{B}(\E_\ell^{-1}).$$ 
Now, Lemma~\ref{lem:Freeproduct} implies that the homomorphic image $\coprod_{i=1}^\ell \mathcal{O}_{A_i}(SL/S^2)$ of $\overline{\mc{B}}$  is not Noetherian and has infinite GK dimension.  Thus, $\mc{O}_{A(n,\E)}(SL)$ is not Noetherian and has infinite GK dimension. Moreover, the same holds for $\mc{O}^{c}_{A(\E)}(GL)$ by considering  its Hopf quotient $\mc{O}_{A(n,\E)}(SL)$.

(d,e) $\Rightarrow$ (a). By employing an argument to the one presented in the paragraph above, we get that $\mc{O}_{A(n,\E)}(GL/S^2)$ and $\mc{O}_{A(n,\E)}(SL/S^2)$ are not Noetherian and have infinite GK dimension since these quantum groups both have $\coprod_{i=1}^\ell \mathcal{O}_{A_i}(SL/S^2)$ as a homomorphic image.
\end{proof}


\section{Cocommutative inner-faithful Hopf coactions on AS regular algebras} \label{sec:finite}

In this section, we study inner-faithful Hopf coactions on AS regular algebras $R$. We establish a general result in Theorem~\ref{thm:centralNak}, which will be applied to the case where $R = A(\E)$ in Corollary~\ref{cor:cocom}. Consider the standing hypotheses below for this section.

\begin{hypothesis} \label{hypsec5} We assume that:
\begin{itemize}
\item $H$ is a Hopf algebra with antipode of finite order.
\item  $R$ is an AS regular algebra satisfying Hypothesis~\ref{hyp}.
\item $H$ coacts on $R$ from the right inner-faithfully.
\end{itemize}
\end{hypothesis}

Motivated by \eqref{eq:Nsymm} and Corollaries~\ref{cor:JordInfty},~\ref{cor:DqInfty}, we inquire:

\begin{question} \label{ques:Ococom}
Given the setting above, when is $H$ cocommutative? In other words, by Cartier-Kostant-Milnor-Moore theorem for cocommutative Hopf algebras, when is $H$ isomorphic to a smash product of the universal enveloping algebra of a Lie algebra and a group algebra?
\end{question}

We will see that the answer to the question above is closely related to certain property of the Nakayama automorphism of $R$ referred to as {\it \textnormal{(}power\textnormal{)} central Nakayama}; see Definition~\ref{def:centralNak} and Theorem~\ref{thm:centralNak}. We begin by considering the following subalgebras of  $M_n(\kk)$.

\begin{definition}[$\mc{C}(\X)$, $\mc{C}_P(\X)$] \label{def:cocom}
Take a matrix $\X \in M_n(\kk)$. 
\begin{enumerate}
\item The {\it centralizer of $\X$} is $\mathcal C(\X):=\{\mathbb P \in M_n(\kk)~|~\mathbb P\X=\X\mathbb P\}$. 
\item The {\it power centralizer of $\X$} is $\mathcal C_P(\X):=\bigcup_{i\ge 1} \mathcal C(\X^i)$.
\end{enumerate}
Moreover, we say that $\X, \Y \in M_n(\kk)$ are {\it $\mc{C}$-equivalent } if $\mc{C}(\X) = \mc{C}(\Y)$.
\end{definition}

Consider the following terminology.

\begin{definition} \label{def:centralNak}
Let $R$ be an AS regular algebra with Nakayama automorphism $\mu_R$. We say that:
\begin{enumerate}
\item $R$ is {\it central Nakayama} if 
$\mc{C}(\mu_R|_{R_1})$ is commutative; and
\item $R$ is {\it power central Nakayama} if 
$\mc{C}_P(\mu_R|_{R_1})$ is commutative.
\end{enumerate}
\end{definition}

Now we answer Question~\ref{ques:Ococom} in Theorem~\ref{thm:centralNak} below, but first we present a preliminary lemma.

\begin{lemma}\label{rem:decomcoalgebra}
Consider the matrix coalgebra $M_n(\kk)$, and  for $\M\in GL_n(\kk)$, the coideal 
$$\mathcal I_{\M}:=\{\M \X-\X\M~|~\X\in M_n(\kk)\}=\{\M \X\M^{-1}-\X~|~\X\in M_n(\kk)\}.$$ Then, the dual algebra of $M_n(\kk)/\mathcal I_{\M}$ is isomorphic to the centralizer algebra $\mathcal C(\M)$ of $\M$ in the matrix algebra $M_n(\kk)$.
\qed
\end{lemma}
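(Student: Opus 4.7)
The plan is to dualize using the standard identification of the convolution dual of the matrix coalgebra with a matrix algebra, and then to compute the annihilator $\mathcal{I}_{\M}^{\perp}$ directly. The result is essentially a careful packaging of the standard fact $M_n(\kk)^* \cong M_n(\kk)$ for the matrix coalgebra, together with the observation that $\M$ and $\M^T$ are similar.

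First, I would fix the nondegenerate pairing $\langle F, X \rangle := \operatorname{tr}(F^T X) = \sum_{i,j} F_{ij} X_{ij}$ on $M_n(\kk)$, and consider the induced map $\tau \colon M_n(\kk) \to M_n(\kk)^*$, $F \mapsto \langle F, - \rangle$. A direct check on the matrix-unit basis shows that $\tau$ is an algebra isomorphism when the source is the usual matrix algebra and the target is the convolution algebra dual to the matrix coalgebra $(M_n(\kk), \Delta, \varepsilon)$ with $\Delta(e_{ij}) = \sum_k e_{ik} \otimes e_{kj}$ and $\varepsilon(e_{ij}) = \delta_{ij}$. Since $\mathcal{I}_{\M}$ is a coideal of $M_n(\kk)$, the quotient $M_n(\kk)/\mathcal{I}_{\M}$ is a coalgebra and its convolution dual is, as a subalgebra of $M_n(\kk)^*$, the annihilator $\mathcal{I}_{\M}^{\perp}$.

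Next, I would compute the annihilator explicitly. By cyclicity of the trace,
\[
\operatorname{tr}\bigl(F^T(\M X - X\M)\bigr) \;=\; \operatorname{tr}\bigl(X(F^T \M - \M F^T)\bigr),
\]
so $\tau(F) \in \mathcal{I}_{\M}^{\perp}$ if and only if $F^T \M = \M F^T$, i.e.\ $F^T \in \mathcal{C}(\M)$. Nondegeneracy of $\langle\,-, -\rangle$ then gives $\tau^{-1}(\mathcal{I}_{\M}^{\perp}) = \{F : F^T \in \mathcal{C}(\M)\} = \mathcal{C}(\M^T)$ as subalgebras of $M_n(\kk)$.

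Finally, since any square matrix over $\kk$ is similar to its transpose (a classical fact, via rational canonical form or the structure theorem applied to the $\kk[t]$-module structures on $\kk^n$), conjugation by an intertwining matrix yields an algebra isomorphism $\mathcal{C}(\M^T) \cong \mathcal{C}(\M)$. Composing with $\tau$ produces the claimed isomorphism $(M_n(\kk)/\mathcal{I}_{\M})^* \cong \mathcal{C}(\M)$. The only real obstacle is bookkeeping around the transpose convention in the identification $M_n(\kk)^* \cong M_n(\kk)$; no step beyond the standard coalgebra/algebra duality is conceptually difficult.
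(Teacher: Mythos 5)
Your proof is correct. The paper offers no argument for this lemma (it is stated with a terminal \qed as a routine fact), and your write-up is exactly the standard duality computation one would supply: the pairing $\langle F,X\rangle=\operatorname{tr}(F^TX)$ does make $F\mapsto\langle F,-\rangle$ an algebra (not anti-algebra) isomorphism onto the convolution dual, the annihilator computation correctly identifies $\mathcal I_{\M}^{\perp}$ with $\mathcal C(\M^T)$, and the similarity of $\M$ with $\M^T$ disposes of the residual transpose, which is the only genuinely delicate point and which you handle properly.
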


\begin{theorem} \label{thm:centralNak}
Recall the hypotheses set at the beginning of this section and in the Introduction. Let $H$ be a Hopf algebra coacting on an AS regular algebra $R$ with homological codeterminant  ${\sf D}\in H$. If one of the following conditions hold:
\begin{enumerate}
\item[(i)] $R$ is central Nakayama, $H$ is involutory, and ${\sf D}$ is central; or
\item[(ii)] $R$ is power central Nakayama, and ${\sf D}^m$ is central for some $m \geq 1$,
\end{enumerate}
then $H$ is cocommutative.  If, further, $H$ is finite dimensional, then $H$ must be a group algebra.
\end{theorem}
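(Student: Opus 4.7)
The plan is as follows. By inner-faithfulness together with a universal-coaction argument (in the spirit of Lemma~\ref{lem:OA(GL)}(b)), the entries $b_{ij}$ of the coaction matrix $\B$, defined by $\rho(v_i)=\sum_s v_s\otimes b_{si}$ for a chosen basis $v_1,\dots,v_n$ of $R_1$, generate $H$ as a Hopf algebra. Hypothesis~\ref{hyp} together with \cite[Theorem~0.1]{CWZ:Nakayama} supplies the crucial identity
\[
S^2(\B)\;=\;\D\,\M\,\B\,\M^{-1}\D^{-1},\qquad \M:=(\mu_R|_{R_1})^T,
\]
generalizing Lemma~\ref{L:quantumGrp}(c) from $R=A(\E)$ to arbitrary AS regular $R$ satisfying Hypothesis~\ref{hyp}. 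All subsequent steps amount to linear algebra in the matrix coalgebra $M_n(\kk)$.

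Next, I extract from each case a commuting relation of the form $\M^e\B=\B\M^e$ for some $e\ge 1$. In case (i), involutivity of $H$ forces $S^2(\B)=\B$, and centrality of $\D$ then cancels, giving $\M\B=\B\M$ (so $e=1$). In case (ii), using that $S^2$ is an algebra automorphism fixing the group-like $\D$ and acting trivially on scalar matrix entries, iteration of the key identity yields $S^{2k}(\B)=\D^{k}\M^{k}\B\M^{-k}\D^{-k}$ for every $k\ge 1$. Choosing $k:=mN$, where $N$ is the (finite) order of $S^2$, makes $S^{2k}=\id$ while $\D^{k}=(\D^{m})^{N}$ is central, so $\M^{mN}\B=\B\M^{mN}$ (i.e.\ $e=mN$).

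The map $\pi:M_n(\kk)\to H$ defined on matrix units by $\T_{ij}\mapsto b_{ij}$ is a coalgebra map with image the subcoalgebra $C:=\operatorname{span}_\kk\{b_{ij}\}$; the relation $\M^e\B=\B\M^e$ is precisely the statement that $\pi$ kills the coideal $\mc{I}_{\M^e}$ of Lemma~\ref{rem:decomcoalgebra}. Hence $C$ is a quotient coalgebra of $M_n(\kk)/\mc{I}_{\M^e}$, whose dual algebra is the centralizer $\mc{C}(\M^e)$. This centralizer sits inside $\mc{C}(\M)$ in case (i) and inside $\mc{C}_P(\M)$ in case (ii); either way it is commutative by the (power) central Nakayama hypothesis, so $C$ is cocommutative.

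Finally, a routine check using $\Delta(ab)=\Delta(a)\Delta(b)$ and $\Delta S=(S\otimes S)\tau\Delta$ shows that the Hopf subalgebra generated by a cocommutative subcoalgebra is itself cocommutative, and since $\{b_{ij}\}$ generates $H$ as a Hopf algebra by inner-faithfulness, $H$ is cocommutative. For the last clause, the Cartier--Kostant--Milnor--Moore theorem gives $H\cong U(\mathfrak{g})\#\kk G$ for some Lie algebra $\mathfrak{g}$ and group $G$; finite-dimensionality forces $\mathfrak{g}=0$, yielding $H\cong \kk G$. The main obstacle is setting up the identity $S^2(\B)=\D\,\M\,\B\,\M^{-1}\D^{-1}$ in the full generality of Hypothesis~\ref{hyp}: one must verify that the arguments of \cite[Theorem~0.1]{CWZ:Nakayama} apply without Noetherian or $N$-Koszul restrictions, carefully tracking the induced $H$-coactions on both $R$ and its Yoneda algebra $E(R)$, and invoking Hypothesis~\ref{hyp} to identify the Nakayama automorphism of $E$ with a transpose of that of $R$. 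Once this identity is available, the rest of the proof is essentially formal manipulation in $M_n(\kk)$ and a standard CKMM endgame.
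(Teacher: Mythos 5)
Your proposal is correct and follows essentially the same route as the paper's proof: the key identity from \cite[Theorem~0.1]{CWZ:Nakayama} combined with Hypothesis~\ref{hyp}, iteration using the finite order of $S$ and centrality of ${\sf D}^m$ to obtain $\M^e\B=\B\M^e$, the duality between $M_n(\kk)/\mc{I}_{\M^e}$ and the centralizer $\mc{C}(\M^e)$ from Lemma~\ref{rem:decomcoalgebra}, and the CKMM endgame. The only (cosmetic) difference is that you work with the coaction matrix on $R_1$ while the paper works with the induced coaction on the degree-one part of the Yoneda algebra $E(R)$; since the two generating subcoalgebras are exchanged by the antipode and $\mc{C}(\M)$ is commutative iff $\mc{C}(\M^T)$ is, this changes nothing.
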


\begin{proof}
We provide the proof for condition (ii) here; the proof for (i) follows in a similar fashion. Consider the Yoneda algebra $E:=\Ext_R^*(\kk,\kk)$, which is a Frobenius algebra by \cite[Corollary D]{LPWZ:Koszul}. Take a $\kk$-linear basis, say $\{x_1,\dots,x_n\}$ of $R_1$, which yields a dual basis $\{x_1^*,\dots,x_n^*\}$ of degree one part of $E$. Since $H$ coacts on $E$ from the left by \cite[Remark 1.6(d)]{CWZ:Nakayama}, we have a set of elements $\Y=\{y_{ij}\}_{1 \leq i,j \leq n}$ in $H$ such that $\rho(x_i^*)=\sum_{1\le j\le n} y_{ij}\otimes x_j^*$. Replacing \cite[Lemma 4.1]{CWZ:Nakayama} by Hypothesis~\ref{hyp}, the proof of \cite[Theorem 0.1]{CWZ:Nakayama} can be copied here as 
\[
{\sf D}^{-1}S^2(\Y){\sf D}=\M\Y\M^{-1},
\]
where $\M=(\mu_R|_{R_1})^T$. The hypothesis on ${\sf D}$ implies that there exists $m \gg 0$ such that $S^{2m}(\Y)=\M^{m}\Y\M^{-m}$. Moreover, since $S$ has finite order, we can further assume that $S^{2m}(\Y)=\M^{m}\Y\M^{-m}=\Y$.

Consider the matrix coalgebra $M_n(\kk)$ and the coideal $$\mc{I}_{m}:=\{\M^{m}\X\M^{-m}-\X~|~\X\in M_n(\kk)\}.$$ By Lemma \ref{rem:decomcoalgebra}, we can identify the dual algebra of $M_n(\kk)/\mc{I}_m$ with $\mc{C}(\M^m)$, the centralizer of the matrix $\M^{m}$ in $M_n(\kk)$. Note that $\mathcal C_P(\M^T)$ is commutative if and only if  $\mathcal C_P(\M)$ is commutative. So, since $R$ is power central Nakayama, we know that $\mc{C}(\M^m)\subseteq \mc{C}_P(\M)$ is commutative. By duality, $M_n(\kk)/\mc{I}_m$ is cocommutative. Now $\Y$ generates a subcoalgebra  $\langle \Y \rangle$ in $H$ such that $\Delta(y_{ij})=\sum_{1\le k\le n}y_{ik}\otimes y_{kj}$. Since $\M^{m}\Y\M^{-m}=\Y$, one sees that $\Y$ is a quotient coalgebra of $M_n(\kk)/\mc{I}_m$. Hence, $\langle \Y \rangle$ is cocommutative.
Thus, $H$ is also cocommutative since the $H$-coaction on $E$ is inner-faithful, and $\langle \Y \rangle$ generates a Hopf subalgebra of $H$. The last statement of the theorem is standard; see e.g. \cite[Exercise~15.3.1]{Radford}.
\end{proof}

\subsection{Conditions to yield the (power) central Nakayama property}

To apply Theorem~\ref{thm:centralNak}, it is helpful to have working conditions that imply the (power) central Nakayama property. First, we recall some well-known facts from linear algebra. 

\begin{lemma} \cite[Proposition~3.2.4 and Remark~3.2.5]{LinearAlgebra} \label{lem:centralNak}
For any matrix $\X\in M_n(\kk)$, we have that the following statements are equivalent.
\begin{enumerate}
\item $\mathcal C(\X)$ is a commutative algebra.
\item $\mathcal C(\X) = \kk[\X]$ \textnormal{(}the set of polynomials in $\X$ over $\kk$\textnormal{)}.
\item $\X$ is non-derogatory: $\text{geom.mult.}(\lambda) = \dim \ker(\X-\lambda \I_n)=1$, for all eigenvalues $\lambda$ of $\X$. 
\item $\dim \mathcal C(\X) = n$.
\item $\dim \kk[\X] = n$. 
\item In the Jordan form of $\X$, there is only Jordan block for each eigenvalue of $\X$.
\item The minimal polynomial of $\X$ and the characteristic polynomial of $\X$ are equal.
\item The minimal polynomial of $\X$ has degree $n$. \qed
\end{enumerate}
 \end{lemma}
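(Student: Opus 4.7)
The plan is to prove these eight conditions equivalent by combining a reduction to Jordan canonical form with the classical dimension formula for matrix centralizers. First I would observe that each of (a)--(h) is invariant under conjugation $\X \mapsto \mathbb{P}\X\mathbb{P}^{-1}$, so it suffices to treat $\X$ in Jordan form: $\X = \bigoplus_{\lambda}\bigoplus_{k=1}^{s_\lambda} J_{n_{\lambda,k}}(\lambda)$ with $n_{\lambda,1} \ge \cdots \ge n_{\lambda,s_\lambda} \ge 1$, where $\lambda$ runs over the distinct eigenvalues of $\X$.

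Next I would knock out the ``polynomial'' equivalences (e) $\iff$ (g) $\iff$ (h). Since $\dim \kk[\X]$ equals the degree of the minimal polynomial $m_\X$, (e) $\iff$ (h) is immediate, and (g) $\iff$ (h) follows from $m_\X \mid \chi_\X$ (Cayley--Hamilton) together with $\deg \chi_\X = n$. The ``structural'' equivalences (c) $\iff$ (f) $\iff$ (g) are then read directly off the Jordan form: the geometric multiplicity of $\lambda$ equals the number of blocks $s_\lambda$, while $m_\X = \prod_{\lambda}(x-\lambda)^{n_{\lambda,1}}$ and $\chi_\X = \prod_{\lambda}(x-\lambda)^{\sum_k n_{\lambda,k}}$, so these two polynomials agree iff $s_\lambda = 1$ for every $\lambda$.

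The next step brings in (d) and hence also (b) and (a). I would establish the Frobenius formula $\dim \mathcal{C}(\X) = \sum_{\lambda} \sum_{k} (2k-1)\, n_{\lambda,k}$, obtained by decomposing a commutant element into $(k,l)$-blocks indexed by the Jordan decomposition and solving $J_{n_{\lambda,k}}(\lambda)\,\X_{kl} = \X_{kl}\,J_{n_{\lambda,l}}(\lambda)$; the solution space on each block is an upper-triangular Toeplitz-shaped space of dimension $\min(n_{\lambda,k}, n_{\lambda,l})$, while intertwiners between blocks for distinct eigenvalues vanish. Since $n = \sum_\lambda \sum_k n_{\lambda,k}$ and $(2k-1)\,n_{\lambda,k} \ge n_{\lambda,k}$ with equality iff $k=1$, this gives (d) $\iff$ (f). Because $\kk[\X] \subseteq \mathcal{C}(\X)$ always holds, combining (d) with (e) forces $\mathcal{C}(\X) = \kk[\X]$, yielding (b), and (b) $\implies$ (a) is trivial.

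Finally I would close the loop with (a) $\implies$ (c) by contrapositive: if some eigenvalue $\lambda$ has $s_\lambda \ge 2$, then from the explicit description of $\mathcal{C}(\X)$ in the previous step one exhibits two intertwiners --- a ``block inclusion'' $E$ from a smaller block into a larger block for $\lambda$, and a ``block projection'' $F$ in the reverse direction, shifted appropriately --- both lying in $\mathcal{C}(\X)$ but satisfying $EF \ne FE$, contradicting (a). The main obstacle is the centralizer dimension computation: although completely classical, it requires a careful case analysis of the block equations when $n_{\lambda,k} \ne n_{\lambda,l}$, and of why off-eigenvalue intertwiners are forced to zero. Once that linear-algebraic step is in place, every other implication in the lemma is essentially formal.
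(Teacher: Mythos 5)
Your proof is correct. Note, however, that the paper does not prove this lemma at all: it is quoted verbatim from the cited linear algebra reference (Proposition~3.2.4 and Remark~3.2.5 of \cite{LinearAlgebra}) and marked \qed\ as a known fact, so there is no in-paper argument to compare against. Your self-contained argument is the standard one and is logically complete: the reduction to Jordan form is legitimate since all eight conditions are conjugation-invariant; the chain (e) $\iff$ (h) $\iff$ (g) $\iff$ (f) $\iff$ (c) is read off correctly from $\dim \kk[\X] = \deg m_{\X}$ and the Jordan block data; the Frobenius formula $\dim \mathcal{C}(\X) = \sum_{\lambda}\sum_{k}(2k-1)n_{\lambda,k}$ (using Sylvester to kill off-eigenvalue intertwiners and the $\min(n_{\lambda,k},n_{\lambda,l})$-dimensional Toeplitz intertwiner spaces) gives (d) $\iff$ (f); the inclusion $\kk[\X] \subseteq \mathcal{C}(\X)$ plus equal dimensions gives (b); and your contrapositive construction of non-commuting intertwiners when some eigenvalue has two or more blocks closes the loop (a) $\Rightarrow$ (c). The only step requiring real care is the commutant dimension count, and you have flagged exactly the right case analysis there.
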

 
Now we provide sufficient conditions for  $\mathcal C_P(\X)$ to be commutative, which should be employed with the lemma above. 
 
 \begin{definition}
Take $\X \in M_n(\kk)$ with distinct eigenvalues $\{\lambda_1, \dots, \lambda_m\}$. We say $\X$ has {\it distinct $e$-th powers of eigenvalues} if the set of scalars $\{\lambda_1^e, \dots, \lambda_m^e\}$ has distinct values, for all $e \geq 1$.
 \end{definition}

\begin{lemma} \label{lem:whenCP(X)com}
Take $\X\in M_n(\kk)$. Then,  $\mc{C}_P(\X)$ is commutative if and only if  $\X$ has distinct $e$-th powers of eigenvalues and $\mc{C}(\X)$ is commutative. In this case, we have that $\mc{C}_P(\X)=\mc{C}(\X)$.
 \end{lemma}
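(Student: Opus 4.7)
The strategy leverages Lemma \ref{lem:centralNak}, which gives that $\mc{C}(\Y)$ is commutative if and only if $\Y$ is non-derogatory, in which case $\mc{C}(\Y) = \kk[\Y]$. I would apply this criterion to $\Y = \X^i$ for each $i \ge 1$ and translate the non-derogatory property of $\X^i$ into a condition on the eigenvalues of $\X$ via a Jordan form computation.

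First I would establish the Jordan structure of $\X^i$ for a non-derogatory $\X$ whose eigenvalues are all nonzero. The key computation is that $J_n(\lambda)^i$ is similar to $J_n(\lambda^i)$ whenever $\lambda \ne 0$: writing $y = x - \lambda$ and working in $\kk[y]/(y^n)$, one has
\[
(\lambda+y)^i - \lambda^i \;=\; y\left(i\lambda^{i-1} + \tbinom{i}{2}\lambda^{i-2} y + \cdots\right),
\]
whose second factor is a unit because its constant term $i\lambda^{i-1}$ is nonzero. Hence $J_n(\lambda)^i - \lambda^i\I$ is nilpotent of index exactly $n$, i.e., a single nilpotent Jordan block. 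Consequently, if $\X$ is non-derogatory with Jordan decomposition $\bigoplus_j J_{n_j}(\lambda_j)$ (all $\lambda_j \ne 0$), then $\X^i \sim \bigoplus_j J_{n_j}(\lambda_j^i)$.

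For the $(\Leftarrow)$ direction, assume $\mc{C}(\X)$ is commutative and $\X$ has distinct $e$-th powers of eigenvalues. Then $\X$ is non-derogatory; by the Jordan description, each $\X^i$ has one block sitting over each of the pairwise distinct scalars $\lambda_j^i$, so $\X^i$ is also non-derogatory. Hence $\mc{C}(\X^i) = \kk[\X^i] \subseteq \kk[\X] = \mc{C}(\X)$, and taking the union gives $\mc{C}_P(\X) = \mc{C}(\X)$, which is commutative. For the $(\Rightarrow)$ direction, the inclusion $\mc{C}(\X^i) \subseteq \mc{C}_P(\X)$ forces each $\mc{C}(\X^i)$ to be commutative, hence each $\X^i$ is non-derogatory; applying the Jordan description again forces the $\lambda_j^i$ to be pairwise distinct for every $i \ge 1$, giving the distinct $e$-th powers condition. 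The final equality $\mc{C}_P(\X) = \mc{C}(\X)$ follows from the same chain of inclusions.

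The main obstacle is the Jordan computation in the first step, which genuinely requires all eigenvalues of $\X$ to be nonzero: for $\lambda = 0$ and $n_j \ge 2$, the block $J_{n_j}(0)^i$ splits into several Jordan blocks for $i \ge 2$, so $\X^i$ can be derogatory even when $\X$ is non-derogatory with the (trivially satisfied) distinct-powers condition. This restriction is benign in the paper, since the lemma will be applied to matrices representing Nakayama automorphisms, which are invertible; I would either include the invertibility hypothesis explicitly or note it in passing.
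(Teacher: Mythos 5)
Your proposal follows essentially the same route as the paper: reduce everything to the non-derogatory criterion of Lemma~\ref{lem:centralNak}, show each $\X^i$ is non-derogatory to get $\mc{C}(\X^i)=\kk[\X^i]\subseteq\kk[\X]$, and run the converse by observing that coincident $e$-th powers of distinct eigenvalues inflate the geometric multiplicity in $\X^e$. The one substantive difference is that you actually prove the step ``$\X$ non-derogatory with distinct $e$-th powers of eigenvalues $\Rightarrow$ $\X^i$ non-derogatory'' via the computation $J_{n}(\lambda)^i\sim J_{n}(\lambda^i)$ for $\lambda\neq 0$, whereas the paper simply asserts it. Your caveat about the zero eigenvalue is a genuine catch: for $\X=J_2(0)$ the hypotheses of the lemma hold ($\mc{C}(\X)$ is commutative and the single eigenvalue trivially has distinct powers) yet $\X^2=0$ gives $\mc{C}(\X^2)=M_2(\kk)$, so $\mc{C}_P(\X)$ is not commutative and the lemma as literally stated fails. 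As you note, this is harmless in context, since the lemma is only ever applied to $\mu_A|_{A_1}=-\E^{-1}\E^T$ with $\E\in GL_n(\kk)$, which is invertible; including the invertibility hypothesis (or adding ``$0$ is not an eigenvalue of $\X$ with a Jordan block of size $\geq 2$'') is the right fix, and your write-up is the more careful of the two.
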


\begin{proof}
Suppose that $\mc{C}(\X)$ is commutative and $\X$ has distinct $e$-th powers of eigenvalues. Then, we know that $\X$ is non-derogatory by Lemma~\ref{lem:centralNak}((a) $\Rightarrow$ (c)). Since $\X$ has distinct $e$-th powers of eigenvalues, $\X^i$ is also non-derogatory. So, $\mc{C}(\X^i)$ is generated by the matrix $\X^i$, for all $i\ge 1$, by Lemma~\ref{lem:centralNak}((c) $\Rightarrow$ (b)). Hence $\mc{C}_P(\X)=\bigcup_{i\ge 1} \mc{C}(\X^i)=\kk[\X]$, which is commutative. Moreover, $\mc{C}_P(\X)=\mc{C}(\X)$ by Lemma~\ref{lem:centralNak}((b) $\Rightarrow$ (a)). 

For the other direction, suppose $\mc{C}_P(\X)$ is commutative. Then, $\mc{C}(\X)\subseteq \mc{C}_P(\X)$ is commutative. By way of contradiction, assume that $\X$ has two distinct eigenvalues $\lambda_1,\lambda_2$ such that $\lambda_1^e=\lambda_2^e$ for some $e \geq 1$. Then, the eigenvalue $\lambda:=\lambda_1^e=\lambda_2^e$ does not have geometric multiplicity 1 in $\X^e$. So,  by Lemma~\ref{lem:centralNak}((a) $\Rightarrow$ (c)), $\mc{C}(\X^e)\subseteq \mc{C}_P(\X)$ is not commutative,  a contradiction. Hence, $\X$ has distinct $e$-th powers of eigenvalues. 
\end{proof}

Another lemma that may be of use is the following.

\begin{lemma}\cite{DGKO} \label{prop:JNaka} Take any non-scalar matrix $\X\in M_n(\kk)$. If  $\X$ satisfies one of the conditions below:
\begin{enumerate}
\item[(i)] $\X$ is $\mc{C}$-equivalent to an idempotent matrix;
\item[(ii)] $\X$ is $\mc{C}$-equivalent to a square-zero matrix; or
\item[(iii)] $\X$ is similar to $\mathbb S \oplus \cdots \oplus \mathbb S$, where $\mathbb S$ is a companion matrix of an irreducible polynomial, such that there is no proper intermediate field between $\kk$ and $\kk[\mathbb S]$;
\end{enumerate} 
then  $\mathcal C_P(\X) = \mc{C}(\X)$. In this case,  $ \mc{C}_P(\X)$ is commutative if and only if $\mc{C}(\X)$ is commutative.
\end{lemma}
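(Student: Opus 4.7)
The inclusion $\mc{C}(\X) \subseteq \mc{C}_P(\X)$ is immediate, since any matrix that commutes with $\X$ commutes with every power $\X^i$. The substance of the lemma is therefore the reverse inclusion $\mc{C}(\X^i) \subseteq \mc{C}(\X)$ for all $i \geq 1$, which I would establish by a case analysis tailored to the canonical form specified in each hypothesis.

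For case (i), the non-scalar hypothesis on $\X$ forces the idempotent $\E$ to be non-trivial, so in an appropriate basis $\E = \I_k \oplus 0_{n-k}$ with $0 < k < n$ and $\mc{C}(\E) = M_k(\kk) \oplus M_{n-k}(\kk)$. Since $\X$ belongs to $\mc{C}(\X) = \mc{C}(\E)$ and must also commute with every element of $\mc{C}(\E)$, it must lie in the center $\kk\I_k \oplus \kk\I_{n-k}$ of that algebra. Hence $\X = \alpha \E + \beta(\I - \E)$ with $\alpha \neq \beta$, and every power satisfies $\X^i = \alpha^i \E + \beta^i (\I - \E) \in \kk\langle \I, \E \rangle$. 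One then verifies that $\mc{C}(\X^i) = \mc{C}(\E) = \mc{C}(\X)$ using the non-scalar condition to exclude the coincidence $\alpha^i = \beta^i$. Case (ii) is handled analogously by placing the square-zero matrix $\N$ into its Jordan form as a direct sum of $2 \times 2$ nilpotent blocks and a zero block, computing $\mc{C}(\N)$ explicitly, and tracking the resulting constraints on $\X$ and its powers (using $\N^2 = 0$ to collapse all higher powers to $0$).

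For case (iii), the hypothesis that there is no proper intermediate subfield between $\kk$ and $\kk[\mathbb{S}]$ means that the subfield $\kk[\mathbb{S}^i]$ must equal either $\kk$ or $\kk[\mathbb{S}]$. In the latter case, $\kk[\X^i] = \kk[\X]$ as subalgebras of $M_n(\kk)$, so applying Lemma~\ref{lem:centralNak}((a) $\Leftrightarrow$ (b)) blockwise along the decomposition $\X \sim \mathbb{S} \oplus \cdots \oplus \mathbb{S}$ yields $\mc{C}(\X^i) \cong M_m(\kk[\mathbb{S}^i]) = M_m(\kk[\mathbb{S}]) \cong \mc{C}(\X)$. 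The former case, where $\mathbb{S}^i$ (and hence $\X^i$) becomes scalar, is excluded by the same non-scalar condition. Once $\mc{C}_P(\X) = \mc{C}(\X)$ is established, the final commutativity assertion is tautological.

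The main obstacle throughout is the bookkeeping needed to rule out the degenerate scenarios in which $\X^i$ becomes scalar, for such a power would make $\mc{C}(\X^i) = M_n(\kk)$ and strictly enlarge $\mc{C}(\X)$; this requires delicate use of the non-scalar hypothesis in each of the three settings, together with the structural rigidity supplied by the canonical forms (idempotent, square-zero, or field-type companion direct sum).
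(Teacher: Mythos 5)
Your route is genuinely different from the paper's: the paper disposes of this lemma in one line by citing \cite[Theorem~3.2]{DGKO}, which characterizes (i)--(iii) as the condition that $\X$ has a \emph{maximal} centralizer, and then invokes the automatic inclusion $\mc{C}(\X)\subseteq\mc{C}(\X^i)$ to force equality. You instead reconstruct $\X$ directly from each hypothesis, and the structural part of your argument is correct: since $\X$ lies in the center of its own centralizer, $\mc{C}(\X)=\mc{C}(\E)$ does force $\X=\alpha\E+\beta(\I-\E)$ with $\alpha\neq\beta$ in case (i), and in case (ii) the bicommutant theorem gives $Z(\mc{C}(\N))=\kk[\N]=\kk\I+\kk\N$, so $\X=\alpha\I+\beta\N$ with $\beta\neq 0$. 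That is a perfectly good self-contained substitute for the citation.

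The gap is the step where you claim the non-scalar hypothesis excludes the degenerate coincidences. It does not: non-scalarity of $\X$ gives $\alpha\neq\beta$ but says nothing about $\alpha^i$ versus $\beta^i$. Concretely, $\X=\Diag(1,-1)$ is non-scalar and $\mc{C}$-equivalent to the idempotent $\Diag(1,0)$, yet $\X^2=\I$, so $\mc{C}(\X^2)=M_2(\kk)\supsetneq\mc{C}(\X)$ and $\mc{C}_P(\X)=M_2(\kk)$ is not even commutative; likewise any nonzero square-zero $\X$ (a single $2\times 2$ nilpotent Jordan block, say) satisfies (ii) with $\X^2=0$, giving $\mc{C}_P(\X)=M_n(\kk)$. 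So this is not a patchable omission in your write-up: these are counterexamples to the statement as printed, and the conclusion $\mc{C}_P(\X)=\mc{C}(\X)$ requires the extra hypothesis that no positive power of $\X$ is scalar (in (i), that $\alpha/\beta$ is not a root of unity; in (ii), that $\alpha\neq 0$; condition (iii) is vacuous over the algebraically closed field $\kk$, where irreducible polynomials are linear). Be aware that the paper's own proof is silent on exactly the same point --- maximality in \cite{DGKO} is maximality among centralizers of \emph{non-scalar} matrices, so the deduction $\mc{C}(\X)=\mc{C}(\X^i)$ breaks precisely when $\X^i$ is scalar --- so you will not find the missing argument there. You correctly identified the scalar-power scenario as ``the main obstacle''; the honest conclusion is that it cannot be ruled out from the stated hypotheses.
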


\begin{proof}
By \cite[Theorem~3.2]{DGKO},  any one of the conditions (i)-(iii) is equivalent to  $\X$ being {\it maximal}, which by definition means that for any $\M \in M_n(\kk)$ with $\mc{C}(\X) \subset \mc{C}(\M)$, we get that $\mc{C}(\X) = \mc{C}(\M)$. Thus, if $\X$ is maximal, then $\mc{C}(\X) = \mc{C}(\X^i)$ for all $i \geq 1$. Hence, $\mathcal C_P(\X) = \mc{C}(\X)$. The last statement is clear.
\end{proof}

\subsection{Application to Hopf coactions on $A(\E)$} \label{sec:application}
Recall from Lemma~\ref{lem:AS(E)} that $A(\E)$ is AS regular if and only if $\E \in GL_n(\kk)$ is congruent to a direct sum of matrices $\J_n$ and $\mathbb D_{2r}(q)$, with $q \neq 0,(-1)^{r+1}$.

\begin{proposition} \label{prop:cenNak}  
We have that $A(\E)$ is  central Nakayama if and only if $\E$ is congruent to 
\begin{equation} \label{eq:cenNakprime}
\begin{array}{c}\J_n\oplus \mathbb D_{2r_1}(q_1)\oplus\cdots \oplus \mathbb D_{2r_s}(q_s) \quad \text{ or }\quad \mathbb D_{2r_1}(q_1)\oplus\cdots \oplus \mathbb D_{2r_s}(q_s),
\end{array}
\end{equation}
with $q_i \neq 0,(-1)^{r_i+1}$ for all $i$, satisfying the following condition:
\begin{equation} \label{eq:cenNak}
\begin{array}{c}
\text{the set of numbers $\{q_1^{\pm 1},q_2^{\pm 1},\dots,q_s^{\pm 1}\}$ is distinct.}
\end{array} 
\end{equation}
Moreover, $A(\E)$ is power central Nakayama if and only if $\E$ is congruent to \eqref{eq:cenNakprime} so that:
\begin{equation} \label{eq:pcenNak}
\begin{array}{c}
\text{the set of numbers $\{q_1^{\pm e},q_2^{\pm e},\dots,q_s^{\pm e}\}$ is distinct, for any $e\geq1$.}
\end{array} 
\end{equation}
\end{proposition}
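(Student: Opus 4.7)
The plan is to compute the Jordan type of the Nakayama matrix $\M := -\E^{-1}\E^T$, which up to transpose represents $\mu_A|_{A_1}$ by Lemma~\ref{lem:NakAutom}(b), for $\E$ in its canonical form. Since commutativity of $\mc{C}(\X)$ and of $\mc{C}_P(\X)$ depends only on the similarity class of $\X$, and since replacing $\E$ by $\mathbb{P}^T\E\mathbb{P}$ conjugates $\E^{-1}\E^T$ to $\mathbb{P}^{-1}(\E^{-1}\E^T)\mathbb{P}$, the (power) central Nakayama property of $A(\E)$ depends only on the congruence class of $\E$. By Lemma~\ref{lem:AS(E)} I may therefore assume $\E$ is a direct sum of blocks of type $\J_{n_k}$ and $\mathbb{D}_{2r_i}(q_i)$, and $\M$ is correspondingly block-diagonal.

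The second step is to identify the Jordan form of $\M$ on each block. For $\E = \mathbb{D}_{2r}(q)$, a direct block-multiplication using $\mathbb{D}_{2r}(q)^{-1} = \begin{pmatrix} 0 & \B_r(q)^{-1}\\ \I_r & 0\end{pmatrix}$ yields
\[
-\mathbb{D}_{2r}(q)^{-1}\mathbb{D}_{2r}(q)^T \;=\; \begin{pmatrix} -\B_r(q)^{-1} & 0\\ 0 & -\B_r(q)^T\end{pmatrix},
\]
and each diagonal block is triangular with a single eigenvalue ($-q^{-1}$ and $-q$ respectively) and nonzero off-diagonal entries, hence similar to $J_r(-q^{-1})\oplus J_r(-q)$. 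For $\E = \J_n$ I would compute $-\J_n^{-1}\J_n^T$ directly in small cases (verified for $n \leq 3$ during the proof of Lemma~\ref{lem:NakAutom}: the result is upper triangular with constant diagonal $(-1)^n$ and nonzero superdiagonal) and then extend to general $n$ by induction on $n$, exploiting the near-antidiagonal structure of $\J_n$, to conclude similarity with the single Jordan block $J_n((-1)^n)$. This is the main technical step and I expect it to be the chief obstacle.

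Given these block computations, the Jordan form of $\M$ for $\E$ as in \eqref{eq:cenNakprime} consists of (possibly) a single $J_n((-1)^n)$ together with $\bigoplus_{i=1}^s\bigl(J_{r_i}(-q_i)\oplus J_{r_i}(-q_i^{-1})\bigr)$. By Lemma~\ref{lem:centralNak}, $\mc{C}(\M)$ is commutative precisely when $\M$ is non-derogatory, i.e.\ when every eigenvalue in $\{(-1)^n,\,-q_1^{\pm 1},\dots,-q_s^{\pm 1}\}$ occurs in a unique Jordan block; since \eqref{eq:cenNak} forces $q_i\neq\pm 1$, the required separations $(-1)^n\neq -q_i^{\pm 1}$ hold automatically, and the remaining distinctness among the $-q_i^{\pm 1}$ is exactly \eqref{eq:cenNak}, while any other canonical form for $\E$ would produce a repeated eigenvalue forcing $\mc{C}(\M)$ to be non-commutative. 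For the power central Nakayama statement I would then invoke Lemma~\ref{lem:whenCP(X)com}, which reduces the claim to central Nakayama together with distinctness of $e$-th powers of the eigenvalues of $\M$ for each $e\geq 1$; after cancelling the common sign $(-1)^e$ this becomes distinctness of $\{q_1^{\pm e},\dots,q_s^{\pm e}\}$ for every $e$, which is condition \eqref{eq:pcenNak}, the auxiliary conditions involving $(-1)^n$ being automatic since \eqref{eq:pcenNak} precludes any $q_i$ from being a root of unity.
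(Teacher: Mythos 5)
Your strategy is the same as the paper's: reduce to the congruence canonical form of Lemma~\ref{lem:AS(E)}, compute the Nakayama matrix $-\E^{-1}\E^T$ block by block, and then apply the non-derogatory criterion of Lemma~\ref{lem:centralNak} together with Lemma~\ref{lem:whenCP(X)com} for the power central case. Your computation for the $\DD_{2r}(q)$ blocks is correct and agrees with the paper, which only asserts the outcome ``by direct computation.'' The issue is the $\J_n$ block. You find that $-\J_n^{-1}\J_n^T$ is a single Jordan block with eigenvalue $(-1)^n$, whereas the paper's proof asserts that its eigenvalue is $-1$ for every $n$. Your value checks out at $n=2$: for $\J_2=\left(\begin{smallmatrix}0&-1\\1&1\end{smallmatrix}\right)$ one computes $-\J_2^{-1}\J_2^T=\left(\begin{smallmatrix}1&-2\\0&1\end{smallmatrix}\right)$, consistent with the well-known fact that the Jordan plane has unipotent Nakayama automorphism.

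The gap is that your final step of the ``only if'' direction --- ``any other canonical form for $\E$ would produce a repeated eigenvalue'' --- is asserted without argument and is actually contradicted by your own eigenvalue computation. Take $\E=\J_2\oplus\J_3$: by your block analysis the Nakayama matrix is similar to $J_2(1)\oplus J_3(-1)$, whose minimal polynomial $(x-1)^2(x+1)^3$ has degree $5$, so it is non-derogatory and $A(\E)$ is (power) central Nakayama by Lemma~\ref{lem:centralNak}; yet by the uniqueness in Lemma~\ref{lem:congruent}, $\J_2\oplus\J_3$ is not congruent to any matrix of the form \eqref{eq:cenNakprime}. So either the eigenvalue of the $\J_n$ block is $-1$ independently of $n$ (in which case at most one $\J$ summand can occur and your converse goes through, but your block computation must then be revisited), or direct sums containing two $\J$ blocks of opposite parity are genuine exceptions that neither your argument nor the paper's handles. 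You must resolve this case explicitly before the ``only if'' direction can be considered established. The rest of your plan --- the forward direction, the observation that \eqref{eq:cenNak} already separates the $\J$-eigenvalue from the $-q_i^{\pm1}$, and the reduction of the power central statement to distinctness of $\{q_i^{\pm e}\}$ via Lemma~\ref{lem:whenCP(X)com} --- is sound.
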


\begin{proof}
By Lemma~\ref{lem:NakAutom}(b), we have that $\mu_A|_{A_1}=-\E^{-1}\E^T$. By direct computation, we get that $-\J_n^{-1}\J_n^T$ and $-\mathbb D_{2r}(q)^{-1}\mathbb D_{2r}(q)^T$ are non-derogatory with eigenvalues $\{-1\}$ and $\{-q, -q^{-1}\}$, respectively. It is easy to see that a direct sum of non-derogatory matrices is non-derogatory if and only if the eigenvalues of all of the summands are distinct.  Thus, by Lemma~\ref{lem:centralNak}, $\mc{C}(\mu_A|_{A_1})$ is commutative if and only if $\E$ is congruent to~\eqref{eq:cenNakprime} and the set of numbers $\{q_1^{\pm 1},q_2^{\pm 1},\dots,q_s^{\pm 1}\}$ is distinct.

Moreover, to employ Lemma~\ref{lem:whenCP(X)com}, we need $\mu_A|_{A_1}=-\E^{-1}\E^T$ to have distinct $e$-th powers of eigenvalues, that is,  $-\J_n^{-1}\J_n^T$ and $-\mathbb D_{2r}(q)^{-1}\mathbb D_{2r}(q)^T$ to have distinct $e$-th powers of eigenvalues. Indeed, $\mu_A|_{A_1}$ has  distinct $e$-th powers of eigenvalues if and only if $\E$ is congruent to \eqref{eq:cenNakprime}, and \eqref{eq:pcenNak} is satisfied. Now we are done by Lemma~\ref{lem:whenCP(X)com}.
\end{proof}

 We provide some consequences below.

\begin{corollary} \label{cor:cocom}
Recall the hypotheses at the beginning of the section and in the Introduction. Let $A(\E)$ be an AS regular algebra of global dimension 2. Let $H$ be a Hopf algebra coacting on $A(\E)$ with homological codeterminant ${\sf D}\in H$. If one of the following conditions hold:
\begin{enumerate}
\item[(i)] $\E$ is congruent to \eqref{eq:cenNakprime} satisfying \eqref{eq:cenNak}, $H$ is involutory, and ${\sf D}$ is central; or
\item[(ii)] $\E$ is congruent to \eqref{eq:cenNakprime} satisfying \eqref{eq:pcenNak}, and ${\sf D}^m$ is central for some $m \geq 1$,
\end{enumerate}
then $H$ is cocommutative.  If, further, $H$ is finite dimensional, then $H$ must be a group algebra.
\end{corollary}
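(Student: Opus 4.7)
The plan is to realize this corollary as a direct specialization of Theorem~\ref{thm:centralNak} once one verifies the hypotheses for the specific algebras $A(\E)$. The first step is to check that $A(\E)$ satisfies Hypothesis~\ref{hyp}: since $A(\E)$ is quadratic (indeed $N$-Koszul with $N=2$) by construction, Hypothesis~\ref{hyp} holds via \cite[Theorem~6.3]{BergerMarconnet}, as noted in the discussion immediately following Hypothesis~\ref{hyp}. This allows us to invoke Theorem~\ref{thm:centralNak} provided we can show $A(\E)$ is central Nakayama in case (i) and power central Nakayama in case (ii).

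The second step is exactly the content of Proposition~\ref{prop:cenNak}: the congruence-class assumption \eqref{eq:cenNakprime} together with \eqref{eq:cenNak} (respectively \eqref{eq:pcenNak}) is precisely equivalent to $A(\E)$ being central Nakayama (respectively power central Nakayama). Thus, under either condition (i) or (ii) of the corollary, the corresponding central Nakayama hypothesis of Theorem~\ref{thm:centralNak} is satisfied. Since the remaining hypotheses on $H$ and $\sf D$ in (i) and (ii) of the corollary match verbatim those in Theorem~\ref{thm:centralNak}(i) and~(ii), we conclude that $H$ is cocommutative.

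For the last sentence, if $H$ is moreover finite dimensional and cocommutative over the algebraically closed field $\kk$ of characteristic zero, then the Cartier-Kostant-Milnor-Moore theorem (see e.g. \cite[Exercise~15.3.1]{Radford}) forces $H$ to be a group algebra. This is the same standard consequence already invoked at the end of the proof of Theorem~\ref{thm:centralNak}, so no new work is required.

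There is essentially no obstacle here; all substantive content has already been established. The only thing to be careful about is ensuring that inner-faithfulness and the finite order of the antipode (the standing assumptions of Section~\ref{sec:finite}, recorded in Hypothesis~\ref{hypsec5}) are carried through from the statement of the corollary, which they are since the corollary explicitly inherits ``the hypotheses at the beginning of the section and in the Introduction.'' The proof therefore reduces to two one-line invocations: Proposition~\ref{prop:cenNak} to convert the matrix conditions \eqref{eq:cenNak}, \eqref{eq:pcenNak} into the (power) central Nakayama property, followed by Theorem~\ref{thm:centralNak} to deduce cocommutativity.
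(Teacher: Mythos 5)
Your proposal is correct and follows essentially the same route as the paper: Proposition~\ref{prop:cenNak} to translate the matrix conditions into the (power) central Nakayama property, a verification of Hypothesis~\ref{hyp}, and then Theorem~\ref{thm:centralNak} (with the group-algebra conclusion from \cite[Exercise~15.3.1]{Radford}). The only cosmetic difference is that the paper checks Hypothesis~\ref{hyp} by citing \cite[Theorem~1.1]{Berger} rather than the $N$-Koszul criterion of \cite[Theorem~6.3]{BergerMarconnet}; be slightly careful that ``quadratic'' does not give Koszulity ``by construction,'' though Koszulity of $A(\E)$ does hold here because the minimal free resolution of $\kk$ over an AS regular algebra of global dimension $2$ generated in degree $1$ is linear.
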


\begin{proof}
Apply Proposition~\ref{prop:cenNak} to get that $A:=A(\E)$ as above is central Nakayama in setting (i), and power central Nakayama in setting (ii).  Next, use \cite[Theorem~1.1]{Berger} to show that Hypothesis~\ref{hyp} holds. Finally, the result is established by Theorem~\ref{thm:centralNak}. \end{proof}

\begin{remark}
Note that $A(2,\E)$ is central Nakayama (resp., power central Nakayama) if and only if $A(2,\E) \not \cong A(2,\DD_2(\pm 1))$ (resp., $A(2,\E) \not \cong A(2,\DD_2(q))$ for $q \in \kk^{\times}$ a root of unity). Note that if $q^{2m} =1$, then we get by Example~\ref{ex:B(Aq)} that ${\sf D}^m$ is central. 

We point out that Corollary~\ref{cor:cocom} is consistent with \cite[Theorem~0.4]{CKWZ} for $A(2,\E)$. Namely, if $H$ is a finite dimensional Hopf algebra that coacts on $A(2,\E)$ inner-faithfully with trivial homological codeterminant, then $H$ is non-cocommutative (resp., and involutory) only when $\E = \mathbb D_2(q)$ for $q$ a root of unity (resp., when $q = -1$). 
On the other hand, the converse of Corollary~\ref{cor:cocom} fails as the cyclic group algebra, generated by Diag$(-1,-1)$, acts on {\it any} AS regular algebra $A(2,\E)$ inner-faithfully with trivial homological determinant by \cite[Theorem~5.2(b1)]{CKWZ}. 
\end{remark}

\begin{remark}
We can easily apply Corollary~\ref{cor:cocom} to get results on $H$-coactions on $A(\E) = A(3,\E)$. Here, $A(3,\E)$ is central Nakayama  if and only if $\E$ is congruent to $\J_3$ or $\J_1 \oplus \mathbb D_2(q)$, for $q \neq \pm 1$; and $A(3,\E)$ is power central Nakayama  if and only if $\E$ is congruent to $\J_3$ or $\J_1 \oplus \mathbb D_2(q)$, for $q \in \kk^{\times}$ a non-root of unity. See Corollary~\ref{cor:AS}(b).
\end{remark}

\section*{Acknowledgments}
The authors are grateful to Julien Bichon for providing careful comments on a preliminary version of this article, and thank Alexandru Chirvasitu, Dan Rogalski, and James Zhang for helpful correspondences. The authors are also thankful for Pavel Etingof and Debashish Goswami for agreeing to allow Example \ref{Ex:NoncomAction} to appear in the present paper. We would also like to thank the referee for valuable comments. C. Walton was supported by the National Science Foundation grant \#1550306.

\bibliography{HopfOneRelator}

\end{document}